\renewcommand\theequation{\thesection.\arabic{equation}}
\newcommand{\BA}{{\mathbb {A}}}
\newcommand{\RD}{{\mathrm {D}}}
\newcommand{\RE}{{\mathrm {E}}}
\newcommand{\RG}{{\mathrm {G}}}
\newcommand{\Aut}{{\mathrm{Aut}}}
\newcommand{\End}{{\mathrm{End}}}
\newcommand{\FJ}{{\mathrm{FJ}}}
\newcommand{\GL}{{\mathrm{GL}}}
\newcommand{\Hom}{{\mathrm{Hom}}}
\newcommand{\SL}{{\mathrm{SL}}}
\newcommand{\Spin}{{\mathrm{Spin}}}
\newcommand{\SO}{{\mathrm{SO}}}
\newcommand{\Sym}{{\mathrm{Sym}}}
\newcommand{\Sp}{{\mathrm{Sp}}}
\newcommand{\wt}{\widetilde}
\newcommand{\ol}{\overline}
\newcommand{\ul}{\underline}
\newcommand{\bs}{\backslash}
\newtheorem{thm}{Theorem}[section]
\newtheorem{cor}[thm]{Corollary}
\newtheorem{lem}[thm]{Lemma}
\newtheorem{prop}[thm]{Proposition}
\newtheorem {ques/conj}[thm]{Question/Conjecture}
\newtheorem{defn}[thm]{Definition}
\newcommand{\Rmnum}[1]{\expandafter\@slowromancap\romannumeral #1@}
\begin{document}
\renewcommand{\theequation}{\arabic{equation}}
\numberwithin{equation}{section}

\title[Raising Nilpotent Orbits]{Raising  Nilpotent Orbits in Wave-Front Sets} 
\author{Dihua Jiang}
\address{School of Mathematics\\
University of Minnesota\\
Minneapolis, MN 55455, USA}
\email{dhjiang@math.umn.edu}

\author{Baiying Liu}
\address{Department of Mathematics\\
University of Utah\\
Salt Lake City, UT, 84112, USA}
\email{liu@math.utah.edu}

\author{Gordan Savin}
\address{Department of Mathematics\\
University of Utah\\
Salt Lake City, UT, 84112, USA}
\email{savin@math.utah.edu}

\date{\today}

\subjclass[2000]{Primary 11F70, 22E50; Secondary 11F85, 22E55}

\keywords{Nilpotent Orbits, Wave-Front Sets, Representations, Automorphic Forms}

\begin{abstract}
We study wave-front sets of representations of reductive groups over global or non-archimedean local fields.
\end{abstract}

\maketitle

%\tableofcontents

\section{Introduction} 

Let $k$ be a global or non-archimedean local field. Let $G(k)$, or simply $G$, 
 be the group of $k$-points of a reductive algebraic group defined over $k$, 
or a central extension of finite degree. 
Let $\frak{g}$ be the Lie algebra of $G$ and $u$ a nilpotent element in $\frak{g}$. We assume that the characteristic of $k$ is large, so that 
the Jacobson-Morozov theorem holds, i.e. there exists a homomorphism $\varphi : \frak{sl}_2 \rightarrow \frak{g}$ such that 
\[ 
u=\varphi \left(\begin{matrix}
0 & 0\\
1 & 0
\end{matrix}\right). 
\] 

Now assume that $k$ is a local field. Fix a non-trivial character $\psi$ of $k$. There is a 
unipotent subgroup  $N_u\subseteq G$ and a character $\psi_u$ of $N_u$, corresponding to $u$ (see Section 5 for definitions). For example, if $u$ belongs to a regular orbit, then $N_u$ is a maximal 
unipotent subgroup and $\psi_u$ is a Whittaker character. Let $\pi$ be a smooth representation of $G$, not necessarily admissible. 
The {\it wave-front set} of $\pi$ is the set of nilpotent orbits 
$\mathcal O$ such that the space of twisted co-invariants $\pi_{N_u, \psi_u}$ is nontrivial, for $u\in \mathcal O$.  Our main result, Corollary \ref{cor:local},  concerns the structure of orbits in the wave-front set.  More precisely, let  $\mathcal O$ be contained in the wave-front set of $\pi$. Then, under certain conditions, a slightly larger orbit $\mathcal O'$  is also contained in the wave-front set of $\pi$. 

Assume, for example, that $G$ is a classical group and $\mathcal O$ corresponds to 
a partition $\ul{p}$. Then the conditions are automatically satisfied if $\ul{p}$ is not special, in the sense of Lusztig and Spaltenstein. The larger orbit  
$\mathcal O'$ corresponds to a partition $\ul{p}'$  obtained from $\ul{p}$ by replacing a pair $(i,i)$ in $\ul{p}$ by $(i-1,i+1)$. This process can be continued until we arrive to a nilpotent element whose corresponding partition is special, more precisely, the special expansion $\ul{p}^G$ of $\ul{p}$. 
 In particular, the maximal orbits in the wave-front set,  with respect to the closure ordering, are special. This result, if $k$ has characteristic zero, was previously obtained by M\oe glin  in \cite{Mo96} for irreducible representations using the relationship of degenerate Whittaker models and the character expansion of (irreducible) representations obtained in the work of M\oe glin and Waldspurger in \cite{MW87}.

We also have analogous results if $k$ is a global field. Let $\mathbb A$ be the corresponding ring of ad\` eles. Let $\psi$ be a non-trivial character 
of $\mathbb A$, trivial on $k$. Let $\Pi$ be a space of smooth functions on $G(k)\backslash G(\mathbb A)$ stable under the action of $G(\mathbb A)$ by right translations. Every nilpotent element $u$  in $\frak{g}$ defines a character $\psi_u$ of $N_u(k)\backslash N_u(\mathbb A)$ and functional on $\Pi$ by 
\[ 
\int_{N_u(k)\backslash N_u(\mathbb A)} f(n) \bar{\psi}_u(n) dn 
\] 
where $f\in \Pi$.  The set of nilpotent $G(k)$-orbits such that this functional is non-trivial is a {\it (global) wave-front set} of $\Pi$. 
 Let  $\mathcal O$ be an orbit  contained in the wave-front set of $\Pi$. 
 Corollary \ref{cor:global},  proved by an argument analogous to the one in the local setting, states that, under certain conditions, a slightly larger orbit $\mathcal O'$  is also contained in the wave-front set of $\Pi$. As a consequence, if $G$ is classical, only special orbits are maximal in the wave-front set of $\Pi$. 
For a background of the global result the reader can consult  \cite{GRS03}, \cite{G06} or \cite{JL15}. In fact, the basic idea of this paper is already contained in 
these papers, for example, in Theorem 3.1 of \cite{G06}.  

 Finally, we consider split exceptional groups. In this case there are 
several non-special orbits that cannot be eliminated, as maximal elements in the wave-front set,  by either our or M\oe glin's  method.  
These orbits are completely odd, non-special orbits, as conjectured by Nevins \cite{N02}, and are denoted by 
$\ast\ast$ in the tables given in Sections \ref{S:G_2} - \ref{S:E_8}. 
The first example of such orbit is the minimal orbit for the exceptional group $\RG_2$. However, this orbit cannot be a maximal element in the wave front set by a more 
elaborate argument contained in \cite{LS}. Thus, it is still reasonable to expect that, for algebraic groups,  only special orbits appear as the maximal elements in the wave-front set of a representation of $G$.

\section{Heisenberg group} 

Assume, from now on, that the characteristic of $k$ is not $2$. Let $\frak{h}$ be a Heisenberg Lie algebra over $k$ with the center $\frak{z}$. 
Let $H=\exp(\frak{h})$ be the Heisenberg group. As a set, $H=\frak{z}$, but the multiplication 
in $H$ is given by the Campbell-Hausdorff formula,
\[ 
x\cdot y=x+y + \frac{1}{2}[x,y]. 
\] 
The center of $H$ is $Z=\exp(\frak{z})$. 
A polarization of $H$ is a decomposition of the Lie algebra 
\begin{equation}\label{eq2} 
\frak{h} = \log(X)\oplus \log (Y) \oplus \frak{z} 
\end{equation}
such that $X=\exp(\log X)$ and $Y=\exp(\log Y)$ are abelian subgroups of $H$. 
 The group $XZ$ is a maximal abelian subgroup of $H$. 
 
Assume now that $k$ is a local field. Fix a non-trivial character $\psi$ of $k$. By choosing an identification $Z$ with $k$ we view $\psi$ as a character of $Z$. 
For every $y\in Y$  we have a character 
$\psi_y: X\rightarrow \mathbb C^{\times}$ given by  
\[ 
\psi_{y}(x)= \psi([y,x]) \text{ for every } x\in X 
\] 
where $[x,y]=xyx^{-1}y^{-1}$.   Let $\rho_{\psi}$ be the representation of $H$ obtained by the smooth induction of the character $1\boxtimes \psi$ of $XZ$. 
The representation $\rho_{\psi}$ is realized on the space $S(Y)$ of Schwartz functions on $Y$. The action on $f\in S(Y)$ is given by 
\[ 
\rho_{\psi}(y) (f)(u) = f(uy)\text{ for every } y\in Y
\] 
  and 
\[ 
  \rho_{\psi}(x)(f)(u) = \psi_u(x) f(u) \text{ for all } x\in X. 
\] 
The same formulae also define the action of $H$ on $L^2(Y)$. This is the Schr\" odinger model of the unique  irreducible unitary representation of $H$ with the central character $\psi$. The subspace of $H$-smooth vectors in $L^2(Y)$ is $S(Y)$.

\begin{prop} \label{P1:local} 
 Let $k$ be a non-archimedean field. Let $\pi$ be a smooth representation of $H$ such that $Z$ acts on $\pi$ as the character $\psi$. The bilinear map 
 $\Hom_{H}(\rho_{\psi}, \pi)\times \rho_{\psi} \rightarrow \pi$ given by $(A,v) \mapsto A(v)$ descends to a canonical isomorphism 
\[ 
\Hom_{H}(\rho_{\psi}, \pi)\otimes \rho_{\psi} \cong \pi. 
\] 
If $\Hom_{H}(\rho_{\psi}, \pi)\neq 0$ then  $\pi_{X, \psi_y}\neq 0$, for every $y\in Y$. 
\end{prop}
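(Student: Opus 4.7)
The plan is to recognize this proposition as a manifestation of the (smooth) Stone--von Neumann theorem for the Heisenberg group $H$, then deduce the coinvariants statement as a formal consequence. First I would compute the twisted coinvariants $(\rho_\psi)_{X,\psi_y}$ directly in the Schr\"odinger model. For $x\in X$ and $f\in S(Y)$, the difference $\rho_\psi(x) f - \psi_y(x) f$ is the function $u\mapsto (\psi_u(x)-\psi_y(x)) f(u)$, which vanishes at $u=y$, so evaluation $f\mapsto f(y)$ descends to a linear functional $(\rho_\psi)_{X,\psi_y}\to\BC$. Conversely, since the pairing $(y,x)\mapsto\psi([y,x])$ identifies $Y$ with a separating family of smooth characters of $X$, any $f\in S(Y)=C_c^\infty(Y)$ with $f(y)=0$ decomposes as a finite sum of characteristic functions of small compact open sets $U_i$ disjoint from $y$; choosing $x_i\in X$ with $\psi_\bullet(x_i)$ constant on $U_i$ and distinct from $\psi_y(x_i)$, each piece is a scalar multiple of $\rho_\psi(x_i)\mathbf{1}_{U_i}-\psi_y(x_i)\mathbf{1}_{U_i}$. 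Hence $(\rho_\psi)_{X,\psi_y}\cong\BC$.

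The heart of the argument is to show that the $H$-equivariant map $\mu: V\otimes\rho_\psi\to\pi$, $A\otimes v\mapsto A(v)$, is an isomorphism. Injectivity is a formal consequence of the irreducibility of $\rho_\psi$: for linearly independent $A_1,\dots,A_n\in V$ the induced map $\rho_\psi^{\oplus n}\to\pi$ is injective, since any nontrivial kernel would, by semisimplicity of $\rho_\psi^{\oplus n}$, contain a copy of $\rho_\psi$ and produce a nontrivial linear relation among the $A_i$. Surjectivity is the main technical obstacle: it amounts to the statement that every nonzero smooth representation of $H$ with central character $\psi$ is a sum of copies of $\rho_\psi$. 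I plan to prove this by the standard Morita-style argument: through the Schr\"odinger model, the twisted Hecke algebra of $H$ modulo the ideal $\langle z-\psi(z)\mid z\in Z\rangle$ is realized as the algebra of finite-rank operators on $S(Y)$, yielding a Morita equivalence with $\BC$ and forcing every smooth module with central character $\psi$ to be a direct sum of copies of $\rho_\psi$. In particular any $v\in\pi$ lies in the image of some intertwiner $\rho_\psi\to\pi$, i.e., in the image of $\mu$.

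Finally, the second statement follows by applying the right-exact $(X,\psi_y)$-coinvariants functor to $\mu$, giving $\pi_{X,\psi_y}\cong V\otimes(\rho_\psi)_{X,\psi_y}\cong V$. Thus $V\neq 0$ implies $\pi_{X,\psi_y}\neq 0$ for every $y\in Y$. The whole proof reduces to the surjectivity step, which is the smooth Stone--von Neumann theorem; everything else is the explicit Schr\"odinger-model computation of the coinvariants and a formal semisimplicity argument for injectivity.
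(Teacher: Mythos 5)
Your proposal is correct, and its overall architecture matches the paper's: decompose $\pi$ as $\Hom_{H}(\rho_{\psi},\pi)\otimes\rho_{\psi}$ and then use evaluation at $y$ in the Schr\"odinger model to produce the $(X,\psi_y)$-equivariant data. The differences lie in how much you prove versus cite. For the tensor decomposition the paper simply refers to Weissman \cite{W03}, whereas you sketch the standard proof: injectivity from Schur's lemma and semisimplicity of $\rho_{\psi}^{\oplus n}$, surjectivity from the smooth Stone--von Neumann theorem via the Morita-type description of the $\psi$-twisted Hecke algebra as finite-rank operators on $S(Y)$. That is the standard correct route, though the surjectivity step is the one place where details would still have to be written out rather than merely planned. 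For the second assertion the paper merely exhibits the nonzero functional $\ell\otimes\ell_y$ and checks that it transforms by $\psi_y$ under $X$, while you go further and compute $(\rho_{\psi})_{X,\psi_y}\cong\BC$ exactly (using nondegeneracy of the pairing between $X$ and $Y$ to show every $f\in S(Y)$ with $f(y)=0$ lies in the span of $\rho_{\psi}(x)f-\psi_y(x)f$), then apply right-exactness of coinvariants to get $\pi_{X,\psi_y}\cong\Hom_{H}(\rho_{\psi},\pi)$. This buys a sharper conclusion --- the twisted coinvariant space is canonically the multiplicity space, not merely nonzero --- at the cost of a longer argument; the paper's one-line functional suffices for its application.
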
  
\begin{proof} The first part, the isomorphism given by $A\otimes v \mapsto A(v)$, is in \cite{W03}. 
If $\ell$ is an arbitrary functional on $\Hom_{H}(\rho_{\psi}, \pi)$ and $\ell_y$ the functional on 
$S(Y)$ given by evaluating functions $f\in S(Y)$ at $y$, then $\ell\otimes \ell_y$ transforms under the action of $X$ as $\psi_y$. This proves the second part. 
\end{proof} 

Now assume that $k$ is a global field.  Let $k_v$ denote either a local non-archimedian completion of $k$, or $k_{\infty}=k\otimes_{\mathbb Q} \mathbb R$. 
Let $\mathbb{A}$ be the corresponding ring of ad\` eles.  It is a restricted product of all $k_v$. 
 Let $\psi$ be an additive character of $\mathbb A$ trivial on $k$. 
As in the local case, every $y\in Y(\mathbb A)$,  defines a character $\psi_y$ of $X(\mathbb{A})$. If $y\in Y(k)$ the the character $\psi_y$ is trivial on $X(k)$. 
The group $H(\mathbb A)$ has  an irreducible
unitary representation  with the central character $\psi$.  This representation is unique up to isomorphism. 
It is realized on $L^2(Y(\mathbb A))$.  
The subspace of $H(\mathbb A)$-smooth vectors in this realization
 is $S(Y(\mathbb A))$. This space is isomorphic to the restricted tenor product of $S(Y(k_v))$. 
 
 The unique irreducible representation of $H(\mathbb A)$  has another realization on 
$L^2_{\psi}(H(k)\backslash H(\mathbb{A}))$, where the subscript $\psi$ denotes the subspace  of 
functions  transforming as $\psi$ under the action of $Z(\mathbb{A})$. 
 This is the lattice model. In this realization, 
the subspace  $\Theta_{\psi}$ of $H(\mathbb A)$-smooth vectors is the space of smooth functions on $H(k)\backslash H(\mathbb{A})$ 
transforming as $\psi$ under the action of $Z(\mathbb{A})$. 
 For every $\theta \in \Theta_{\psi}$ and $u\in Y(\mathbb A)$, let 
\[ 
f_{\theta}(u)= 
\int_{X(k)\backslash X(\mathbb{A})} \theta(xu) dx. 
\] 
Then $\theta \mapsto f_{\theta}$ is an isomorphism of $\Theta_{\psi}$ and $S(Y(\mathbb A))$, the spaces of $H(\mathbb A)$-smooth vectors in the two models of the Heisenberg representation.

\begin{prop} \label{P1:global} 
Let $k$ be a global field. Let $\Pi$ be a non-zero subspace of  $\Theta_{\psi}$, stable under the action of $H(\mathbb A)$ by the right translations. 
   Then, for every $y\in Y(k)$,  there exists $\theta\in \Pi$ such that 
\[ 
\int_{X(k)\backslash X(\mathbb{A})} \theta(x) \bar{\psi}_y(x) dx \neq 0. 
\] 
\end{prop}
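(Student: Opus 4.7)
The plan is to reduce the statement to an essentially elementary fact about values of Schwartz functions on $Y(\mathbb{A})$, by exploiting the lattice-to-Schr\"odinger isomorphism $\theta \mapsto f_\theta$ recalled just before the proposition.

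The first step is to recognize the functional $T_y(\theta) := \int_{X(k)\backslash X(\mathbb{A})} \theta(x)\,\bar\psi_y(x)\,dx$ as the evaluation $f_\theta(y)$. In the Heisenberg group one has the identity $xy = [x,y]\cdot yx$ with $[x,y] = xyx^{-1}y^{-1} \in Z$. Combined with the two defining properties of $\theta \in \Theta_{\psi}$ --- that $\theta$ transforms under $Z(\mathbb{A})$ as $\psi$ and is left-invariant under $H(k) \supset Y(k)$ --- this yields
\[
\theta(xy) \;=\; \psi([x,y])\,\theta(yx) \;=\; \psi([x,y])\,\theta(x) \;=\; \bar\psi_y(x)\,\theta(x)
\]
for every $y \in Y(k)$ and $x \in X(\mathbb{A})$, the last equality using $[x,y] = [y,x]^{-1}$ and hence $\psi([x,y]) = \overline{\psi([y,x])} = \bar\psi_y(x)$. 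Integrating over $X(k)\backslash X(\mathbb{A})$ then gives
\[
f_\theta(y) \;=\; \int_{X(k)\backslash X(\mathbb{A})}\theta(xy)\,dx \;=\; \int_{X(k)\backslash X(\mathbb{A})}\bar\psi_y(x)\,\theta(x)\,dx \;=\; T_y(\theta).
\]

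Next I would verify, by a direct calculation in the same spirit as the action check underlying Proposition~\ref{P1:local}, that $\theta \mapsto f_\theta$ intertwines the right translation action of $H(\mathbb{A})$ on $\Theta_{\psi}$ with the Schr\"odinger action on $S(Y(\mathbb{A}))$. Hence the image $V := \{f_\theta : \theta \in \Pi\} \subseteq S(Y(\mathbb{A}))$ is a nonzero $H(\mathbb{A})$-stable subspace. For a prescribed $y \in Y(k)$, choose any nonzero $f_0 \in V$ and any point $y_0 \in Y(\mathbb{A})$ with $f_0(y_0) \neq 0$. Since $\rho_{\psi}(y')f(u) = f(u+y')$, the translate $f_1 := \rho_{\psi}(y_0 - y)f_0$ lies in $V$ and satisfies $f_1(y) = f_0(y_0) \neq 0$. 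If $\theta_1 \in \Pi$ corresponds to $f_1$, then $T_y(\theta_1) = f_{\theta_1}(y) \neq 0$, as required.

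The only substantive step is the commutator identity $\theta(xy) = \bar\psi_y(x)\,\theta(x)$, which simultaneously uses the Heisenberg bracket and the automorphy of $\theta$ under $Y(k)$; everything after that is formal. In particular, no abstract global irreducibility (Stone--von Neumann) statement for the adelic Heisenberg representation is needed, because the translation action of $Y(\mathbb{A})$ alone is enough to move a nonzero value of $f_0$ to any prescribed point $y$.
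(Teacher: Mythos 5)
Your proof is correct and follows essentially the same route as the paper's: both rest on the injectivity of $\theta\mapsto f_\theta$, a right translation by an element of $Y(\mathbb{A})$ to move a nonvanishing value of $f_{\theta'}$ to the prescribed rational point $y$, and the commutator identity $\theta(xy)=\bar\psi_y(x)\theta(x)$ combined with left $Y(k)$-invariance. You merely reorder the steps (identifying the twisted period with evaluation $f_\theta(y)$ first, then translating), which is a cosmetic difference.
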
  
\begin{proof} Since the map $\theta \mapsto f_{\theta}$ is injective, there exists $u'\in Y(\mathbb A)$ and $\theta'\in \Pi$ such that 
$f_{\theta'}(u')\neq 0$. Let $\theta$ be the right translate of $\theta'$ by  $y^{-1}u'$. Then 
\[ 
 \int_{X(k)\backslash X(\mathbb{A})} \theta(xy) dx \neq 0.
\]  
Next, we have the following  easy sequence of equalities, 
\[ 
\theta(xy)=\theta([x,y]yx)=\theta(yx) \bar{\psi}_y(x)=\theta(x)\ \bar{\psi}_y(x)
\] 
where, for example, the last identity holds since $\theta$ is left $H(k)$-invariant. Substituting $\theta(x)\ \bar{\psi}_y(x)$ for $\theta(xy)$ in the integral yields 
the proposition. 

\end{proof}

\section{Jacobi group} 

In this section we first introduce a type of Jacobi group that we shall need, and then define a notion of generic characters of certain abelian, 
unipotent subgroups of the Jacobi group. 

For our purposes, a Jacobi group $J$ is a semi-direct product of $M$, a central extension of ${\SL}_2$, and the Heisenberg group $H$.  
Let $\frak{sl}_2$ be the Lie algebra of $M$. We assume that, under the adjoint action of $\frak{sl}_2$, the Lie algebra $\frak{h}$ of $H$ decomposes as $m V_2 \oplus \frak{z}$, where $V_2$ is the irreducible 2-dimensional representation of $\frak{sl}_2$. 
 Let 
 \[ 
 s=\left(\begin{matrix} 1 & 0 \\ 0 & -1\end{matrix}\right). 
 \] 
 This assumption implies that there is a polarization (\ref{eq2}) of $\frak{h}$ 
where $\log X$ and $\log Y$ are the spaces of $s$-weight 1 and -1, respectively.  In particular, $\log X\oplus \frak{z}$ is a maximal abelian subalgebra of $\frak{h}$. 
Let $\frak{n}$ be the unipotent subgroup of upper triangular matrices in 
$\frak{sl}_2$, i.e. elements in $\frak{sl}_2$ of $s$-weight 2.  Then  
 \[ 
 \frak{u}=\frak{n} \oplus \log X \oplus\frak{z}
 \] 
  is an abelian sub-algebra of the Lie algebra of $J$.  Let $U=\exp(\mathfrak u)$. 
In order to discuss characters of $U$, it 
will be convenient to work with an explicit realization of $J$. 

Let $e_0, \ldots , e_{m}, f_{m}, \ldots , f_0$ be a basis of a vector space of dimension $2m+2$. 
Let $\langle,\rangle$ be a symplectic form such that 
\[ 
\langle e_i, e_j\rangle =0, \langle f_i, f_j\rangle =0\text{ and } \langle e_i, f_j\rangle =\delta_{ij}. 
\] 
Let $\Sp_{2m+2}$ be the group of linear transformations preserving the form $\langle,\rangle$, and $\frak{sp}_{2m+2}$ the corresponding Lie algebra. 
If $A$ is a square matrix, let $A^{\top}$ denote the transpose of $A$ with respect to the opposite diagonal.
The Lie algebra  $\frak{sp}_{2m+2}$,  in the basis $e_0, \ldots e_{m}, f_{m}, \ldots f_0$,   consists of block matrices 
\[ 
\left(\begin{matrix} A & B \\ 
C & D \end{matrix}\right) 
\] 
where $B=B^{\top}$, $C=C^{\top}$ and $D=-A^{\top}$. In this identification, the Killing form on the Lie algebra is given by the trace pairing.

In this setting the Heisenberg group $H$ is the unipotent radical of the maximal parabolic subgroup $P$ of $\Sp_{2m+2}$ stabilizing the line through $e_0$. 
 A Levi factor of $L$ of $P$ is given as the subgroup of all elements in $P$ stabilizing the line through $f_0$.  The derived group of $L$ is $\Sp_{2m}$, the subgroup 
 of $\Sp_{2m+2}$ fixing $e_0$ and $f_0$.  The conjugation action 
 of $\Sp_{2m}$ on $H$ gives an isomorphism of $\Sp_{2m}$ and the group of outer automorphisms of $H$. In order to write down $J$, we need to choose an embedding of 
 $\SL_2$ into $\Sp_{2m}$. Roughly speaking, we embed $\SL_2$ diagonally into $m$ long-root $\SL_2$'s. Up to conjugation, all such embeddings can be described as follows. 
 
  Fix an $m$-tuple $(a_1, \ldots , a_{m})$ of non-zero elements in $k^{\times}$. 
Let $\SL_2$ be the subgroup of $\Sp_{2m}$ such that,  for every $i\neq 0$,  it acts on the plane spanned by $e_i,f_i$ in the standard way with  respect to the basis $a_i e_i, f_i$.  We let $J$ be the semi direct product of $H$ with $M$, a central extension of this $\SL_2$. 

The algebra $\frak{u}=\frak{n}\oplus  \log X \oplus\frak{z}$ now 
consists of all matrices such that $A=C=D=0$ and (in the case $m=2$) 
\[ 
B=\left(\begin{matrix} x_2 & x_1 & z  \\ 
0 & n_1 & x_1 \\
n_2 & 0 & x_2 \end{matrix}\right) 
\] 
where $n_i=a_i n$, for some $n\in k$. 
Thus, we identify $\frak{u}$ with triples $(n,x,z)$ where $x=(x_1, \ldots , x_{m})$. Using the Killing form on $\frak{sp}_{2m+2}$ 
the dual space of $\frak{u}$ is identified with the quotient $\frak{sp}_{2m+2}/\frak{u}^{\perp}$. As a complement of 
$\frak{u}^{\perp}$ in  $\frak{sp}_{2m+2}$ we can take the set of all matrices such that $A=B=D=0$ and (in the case $m=2$) 
\[ 
C=\left(\begin{matrix} x^*_2 & 0 & n^*_2  \\ 
x^*_1 & n^*_1 & 0 \\
z^*& x^*_1 & x^*_2 \end{matrix}\right) 
\] 
where $n^*_i=a^{-1}_i n^*$, for some $n^*\in k$.
Thus, we identify the dual $\frak{u}^*$ with the set of triples $(n^*,x^*,z^*)$, where $x=(x^*_1, \ldots , x^*_{m})$. 
Now the natural pairing $\langle \cdot, \cdot \rangle$  between $\frak{u}$ and $\frak{u}^*$  is explicitly given by 
\[ 
\langle (n,x,z), (n^*,x^*,z^*) \rangle= Tr(BC)= zz^* +2\sum_{i=1}^{m}x_ix^*_i  +m nn^*. 
\] 

The commutative sub algebra $\log Y$ consists of matrices such that $B=C=0$ and (in the case $m=2$) 
\[ 
A=\left(\begin{matrix} 0 & y_2 & y_3  \\ 
0 & 0 & 0 \\
0 & 0 & 0 \end{matrix}\right). 
\] 
Thus, any element in $\log Y$ is identified with an $m$-tuple $y=(y_1, \ldots , y_m)$. The group $Y$ normalizes $U$. Hence $Y$ acts, by conjugation, on
 $\frak{u}$ and we have a co-adjoint action of $Y$ on $\frak{u}^*$. A short calculation shows that $\exp(y)$ acts on $(n^*,x^*, z^*)\in \frak{u}^*$ by 
\[ 
(n^*,x^*, z^*) \mapsto (n^* - 2x^*\cdot y +z^* y\cdot y , x^*-z^*y, z^*)
\] 
where the dot product  on $k^m$ is the one weighted by $a_i$'s, i.e. 
\[ 
y\cdot y=\sum_{i=1}^{m} a_i y_i y_i . 
\]  
It follows that the homogeneous polynomial $\Delta : \frak{u}^* \rightarrow k$ 
\[ 
\Delta(n^*,x^*,z^*)=n^*z^*-x^*\cdot x^*
\] 
is invariant under the action of $Y$. Note that $(n^* ,x^*, z^*)$ is in the $Y$-orbit of  $(\Delta/z^*, 0, z^*)$, if $z^*\neq 0$
.  We record this in the following proposition. 

\begin{prop} \label{P3:generic}  The polynomial $\Delta(n^*,x^*, z^*)=n^*z^* -x^*\cdot x^*$ on the linear space $\frak{u}^*$ 
  is invariant under the co-adjoint action of $Y$. Any element 
 $(n^* ,x^*, z^*)$ with $z^*\neq 0$  is the $Y$-orbit  of $(\Delta/z^*, 0,z^*)$.  
\end{prop}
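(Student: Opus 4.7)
The plan is to read off both assertions directly from the explicit formula for the coadjoint action of $Y$ on $\mathfrak{u}^*$ that was just derived in the paragraph preceding the proposition, namely
\[
\exp(y)\cdot (n^*,x^*,z^*) = (n^* - 2x^*\cdot y + z^*\, y\cdot y,\ x^* - z^* y,\ z^*).
\]
Nothing more sophisticated is needed: the proposition is essentially a direct computation verifying that $\Delta$ is a coadjoint invariant and that the $Y$-orbit of any element with $z^*\neq 0$ contains a normal form whose middle coordinate vanishes.

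For the first assertion, I would substitute the transformed triple into $\Delta$. The new $z^*$-coordinate is unchanged, so the first term becomes $(n^* - 2x^*\cdot y + z^*\, y\cdot y)\,z^*$. Expanding the new $x^*\cdot x^*$ as $(x^* - z^* y)\cdot(x^* - z^* y) = x^*\cdot x^* - 2 z^*\, x^*\cdot y + (z^*)^2\, y\cdot y$ and subtracting, the $y$-dependent terms cancel pairwise, leaving $n^* z^* - x^*\cdot x^* = \Delta(n^*,x^*,z^*)$. Hence $\Delta$ is $Y$-invariant.

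For the second assertion, assume $z^*\neq 0$ and solve for $y$ that kills the middle coordinate: the equation $x^* - z^* y = 0$ forces the unique choice $y = x^*/z^*$. Substituting this $y$ into the new first coordinate gives
\[
n^* - 2\,x^*\cdot (x^*/z^*) + z^*\, (x^*/z^*)\cdot (x^*/z^*) = n^* - \frac{x^*\cdot x^*}{z^*} = \frac{\Delta(n^*,x^*,z^*)}{z^*},
\]
while the third coordinate remains $z^*$. Thus $\exp(x^*/z^*)$ carries $(n^*,x^*,z^*)$ to $(\Delta/z^*,\,0,\,z^*)$, completing the proof.

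There is no real obstacle here: both claims are immediate consequences of the coadjoint action formula, the only mildly delicate point being to keep track of the weighting in the dot product $y\cdot y = \sum a_i y_i^2$, which is the same $a_i$-weighted pairing appearing in $x^*\cdot x^*$, so the cancellation in the first step is exact.
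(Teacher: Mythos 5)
Your proposal is correct and is essentially the paper's own argument: the paper derives the coadjoint action formula in the paragraph immediately preceding the proposition and then records the invariance of $\Delta$ and the normal form $(\Delta/z^*,0,z^*)$ as direct consequences, exactly as you do by expanding $(x^*-z^*y)\cdot(x^*-z^*y)$ and by choosing $y=x^*/z^*$. Nothing is missing.
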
   

If $\psi$ is  a nontrivial character of $k$ then any $u^*\in \frak{u}$ defines a character $\psi_{u^*}$  of $U$ by 
\[ 
\psi_{u^*}(\exp(u)) =\psi(\langle u,u^*\rangle).  
\]  
\begin{defn} \label{D:generic} 
The character $\psi_{u^*}$  of $U$ is \underline{generic} if $\Delta(u^*)\neq 0$. 
\end{defn}

\section{Fourier-Jacobi models}

In this section we define local and global Fourier-Jacobi models of representations of the Jacobi group $J$ with non-trivial action of the center $Z$. The 
Fourier-Jacobi model is a representation of $M$.  If the Fourier-Jacobi model is Whittaker generic, then the original representation of $J$ is generic in the sense of 
Definition \ref{D:generic}. 

Fix a non-trivial character 
$\psi$ of $Z$. Realize $J$ as a  subgroup of $\Sp_{2m+2}(k)$, as in the previous section. 
 In particular, we have fixed an identification of $Z$ and $k$. In this way, $\psi$ can be viewed as 
a character of $k$.  Now every  $u^*\in \frak{u}$ gives a character $\psi_{u^*}$ of $U$. 

%Let $M'=M$ if $m$ is even,  $M'=\wt{\SL}_2$ if $d=1$ and $m$ is odd, $M'=\SL_2$ if $d=2$ and $m$ is odd, $M'=\SL_2^{(2d)}$ if $d > 2$ is odd and $m$ is odd, $M'=\SL_2^{(d)}$ if $d > 2$ is even and $m$ is odd.

\smallskip

Assume now that $k$ is a local field.  Then, by Weil, the representation  $\rho_{\psi}$ extends to $J$. We shall need the following about this extension. 
The group of upper triangular matrices in $\SL_2(k)$ canonically splits in any central extension. In particular any element $n$ in $N=\exp(\mathfrak{n})\subset J$ is  
uniquely represented by a matrix $\left(\begin{smallmatrix}
1 & x\\
0 & 1
\end{smallmatrix}\right)$. Now for every $f \in S(Y)$ and $y=\exp(y_1, \ldots , y_{m}) \in Y$,
\[ 
(\rho_{\psi}(n) f)(y)= \psi(x \sum_{i=1}^{m} a_iy_i^2) f(y) ,
\] 
 where $a_i$'s are as in Section 3.

 Let $\pi$ be a smooth representation of $J$ such that the center $Z$ of $H$ acts on $\pi$ by $\psi$. 
 Then there is a representation $\sigma$ of $J$ on $FJ_{\psi}(\pi):= \Hom_{H}(\rho_{\psi}, \pi)$ defined by 
\[ 
A \mapsto \sigma(g)(A)= {\pi}(g)  \cdot A \cdot  \rho_{\psi}(g^{-1}).
\] 
for every $g\in J$ and $A\in \Hom_{H}(\rho_{\psi}, \pi)$. 
Now the isomorphism in Proposition \ref{P1:local} is an isomorphism of $J$-modules. 
Note that the subgroup $H$ acts \underline{trivially}. 
Hence $(\sigma, FJ_{\psi}(\pi))$ is a representation of $M$.

 \begin{prop}\label{P2:local} 
  Let $k$ be a non-archimedean field. 
  Let $\pi$ be a smooth $J$-module such that $Z$ acts on $\pi$ by the character $\psi$. 
If $(\sigma, FJ_{\psi}(\pi))$ is a Whittaker-generic representation of $M$,
then $\pi_{U, \psi_{u^*}}\neq 0$ for $u^*=(n^*,0,1)$ for some  $n^*\in k^{\times}$. 
\end{prop}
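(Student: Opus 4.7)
The plan is to compute $\pi_{U,\psi_{u^*}}$ by combining the $J$-module decomposition of Proposition \ref{P1:local} with an explicit analysis of how $U = NXZ$ acts on the Schr\"odinger model of $\rho_\psi$.

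Since $\mathfrak{u}$ is abelian by construction, $U = NXZ$ is an abelian subgroup whose three factors commute. For $u^* = (n^*, 0, 1)$ the pairing formula of Section 3 gives that $\psi_{u^*}$ restricts to $\psi$ on $Z$, to the trivial character on $X$, and to $\chi_{n^*}(\exp(tE_{12})) = \psi(mn^*t)$ on $N$. The central character $\psi$ of $\pi$ matches $\psi_{u^*}|_Z$, so $Z$-coinvariants against $\psi$ are a no-op, and
\[
\pi_{U, \psi_{u^*}} \;=\; \pi_{NX,\, \psi_{u^*}|_{NX}}.
\]

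Next, apply the $J$-module isomorphism $\pi \cong FJ_\psi(\pi) \otimes \rho_\psi$ from Proposition \ref{P1:local}, in which $H$ acts trivially on the first factor. On $\rho_\psi = S(Y)$, the subgroup $X$ acts by $(\rho_\psi(x)f)(y) = \psi_y(x) f(y)$, and the commutator pairing composed with $\psi$ identifies $Y$ with the Pontryagin dual of $X$; consequently the coinvariants $S(Y)_{X,1}$ are one-dimensional, realized by the evaluation $f \mapsto f(0)$. Hence $\pi_{X,1} \cong FJ_\psi(\pi) \otimes S(Y)_{X,1} \cong FJ_\psi(\pi)$. To pass to the $N$-coinvariants, I track the residual $N$-action: the element $n = \exp(tE_{12})$ acts on $FJ_\psi(\pi) \otimes S(Y)$ by $\sigma(n) \otimes \rho_\psi(n)$, where $(\rho_\psi(n)f)(y) = \psi(t\sum_i a_i y_i^2) f(y)$ has quadratic phase that vanishes at $y=0$. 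Therefore $\rho_\psi(n)$ descends to the identity on $S(Y)_{X,1}$, so $N$ acts on $FJ_\psi(\pi)$ only through $\sigma$, and
\[
\pi_{U, \psi_{u^*}} \;\cong\; FJ_\psi(\pi)_{N, \chi_{n^*}}.
\]

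To conclude, invoke the Whittaker-generic hypothesis on $(\sigma, FJ_\psi(\pi))$: there exists $c \in k^\times$ such that $FJ_\psi(\pi)_{N, \chi}$ is nonzero for $\chi(\exp(tE_{12})) = \psi(ct)$. Since the characteristic of $k$ is large, $m$ is a unit in $k$, and we may set $n^* := c/m \in k^\times$, making $\chi_{n^*} = \chi$ and forcing $\pi_{U, \psi_{u^*}} \neq 0$.

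The subtlest point is the computation of the $X$-coinvariants of the Schr\"odinger model as $\mathbb C$ via evaluation at the origin, together with the crucial observation that the residual $N$-action on this one-dimensional quotient is trivial because the quadratic phase $\psi(t\sum a_i y_i^2)$ vanishes precisely at $y=0$. Everything after this reduction is a bookkeeping argument matching characters of $N$, where the freedom to choose $n^* \in k^\times$ provides exactly the flexibility needed.
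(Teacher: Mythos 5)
Your proof is correct and follows essentially the same route as the paper's: decompose $\pi \cong FJ_\psi(\pi)\otimes S(Y)$ via Proposition \ref{P1:local} and pair a Whittaker functional on the first factor with evaluation at the identity of $Y$ on the second (the paper's $\ell\otimes\ell_1$), the latter being exactly your identification of $S(Y)_{X,1}$ with $\mathbb{C}$. Your additional observations --- that the residual $N$-action on the one-dimensional quotient is trivial because the quadratic phase vanishes at the origin, and that the factor $m$ in the pairing is absorbed by rescaling $n^*$ --- are details the paper leaves implicit, and they are handled correctly.
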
 

\begin{proof} 
Since $\Hom_H(\rho_{\psi}, \pi)$ is Whittaker-generic, as a representation of $M$,  there is $n^* \in k^{\times}$ and a 
a non-zero functional $\ell$ on $\Hom_H(\rho_{\psi}, \pi)$  such that 
\[ 
\ell(\sigma(n) A) = \psi(n^*x) \ell(A) 
\] 
for all $A\in \Hom_{H}(\rho_{\psi}, \pi)$  and $n \in N$, where $n$ is represented by $\left(\begin{smallmatrix}
1 & x\\
0 & 1
\end{smallmatrix}\right)$. 
Recall that we have a decomposition 
\[ 
\pi= \sigma\otimes \rho_{\psi}, 
\] 
where  $\rho_{\psi}$ acts on $S(Y)$. Let 
\[ 
\ell_1: S(Y) \rightarrow \mathbb C 
\] 
be the functional given by evaluating functions at $1\in Y$.  Then $\ell\otimes \ell_1$ is a functional of $\pi$ that transforms as $\psi_{u^*}$ under the action of $U$, 
where $u^*=(n^*,0,1)$. 
\end{proof} 

Let $k$ be a global field, and fix a non-trivial character $\psi$ of $Z(\mathbb A)/Z(k)$. 
 Let $\rho_{\bar{\psi}}$ (note the complex conjugate!)  be the Weil representation of $J(\BA)$ on  $S(Y(\BA))$. 
For any  function $\phi \in S(Y(\BA))$ define a theta series 
\[
\theta^{\phi}_{\bar{\psi}}(hg)= \sum_{\xi \in Y(k)} \rho_{\bar\psi}(hg)\phi(\xi)= \sum_{\xi \in Y(k)} \rho_{\bar{\psi}}(\xi hg)\phi(1)
\] 
 where $h \in H(\BA)$ and $g \in M(\BA)$.

  Let $\Pi$ be a space of smooth functions on $J(k) \backslash J(\mathbb A)$ stable under the action of $J(\mathbb A)$ by right translations and such that $Z(\mathbb A)$ acts  by the  character $\psi$. For every $\phi \in S(\BA)$ and $f\in \Pi$, the function $h\mapsto f(hg) \theta_{\phi}(hg)$, where $h\in H(\BA)$, is left $Z(\BA)$-invariant, hence it can be viewed as a function on $V=H/Z$. 
 Let $\FJ_{\psi}(\Pi)$ be the representation of $M(\BA)$ spanned by the functions
 \[
 F_{f,\phi}(g) := \int_{V(k) \bs V(\BA)} f(vg)\theta^{\phi}_{\bar\psi}(vg)dv,
 \]
 where $f$ runs over $\Pi$, $\phi$ runs over $S(Y(\BA))$. Using the definition of $\theta^{\phi}_{\bar{\psi}}$ and that $f$ is left $\xi$-invariant for all 
 $\xi\in Y(k)$, the expression for $ F_{f,\phi}(g)$ can be made more explicit as
  \[
 F_{f,\phi}(g) = \int_{X(k) \bs V(\BA)} f(vg)[\rho_{\bar\psi}(vg) \phi](1)dv. 
 \]
 
 The following  is a global analogue of Proposition \ref{P2:local}. 

\begin{prop} \label{P2:global}
Let $k$ be a global field. 
 Let $\Pi$ be a space of smooth functions on $J(k) \backslash J(\mathbb A)$ stable under the action of $J(\mathbb A)$ by right translations and such that $Z(\mathbb A)$ acts  by the  character $\psi$. 
If $FJ_{\psi}(\Pi)$ is a Whittaker-generic representation of $M(\BA)$,
then there exists $f\in \Pi$ and $u^*=(n^*, 0,1)$ where $n^*\in k^{\times}$ such that 
\[ 
\int_{U(k)\backslash U(\mathbb{A})} f(u) \bar{\psi}_{u*}(u) du \neq 0. 
\] 
\end{prop}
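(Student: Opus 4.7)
\textbf{The plan is} to adapt the proof of Proposition \ref{P2:local} to the global setting, using the Fourier--Jacobi integral $F_{f,\phi}$ in place of the tensor-product decomposition $\pi\cong \sigma\otimes\rho_\psi$. Whittaker-genericity of $FJ_\psi(\Pi)$ furnishes $f\in\Pi$, $\phi\in S(Y(\BA))$, $g_0\in M(\BA)$, and $n^*\in k^\times$ with
\[
W := \int_{N(k)\bs N(\BA)} F_{f,\phi}(n g_0)\,\bar\psi(n^* x_N)\,dn \neq 0,
\]
where $n = n(x_N)=\left(\begin{smallmatrix}1 & x_N\\ 0 & 1\end{smallmatrix}\right)$. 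Replacing $f$ by its right translate by $g_0$ (which remains in $\Pi$) and $\phi$ by $\rho_{\bar\psi}(g_0)\phi$, I may assume $g_0=e$.

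\textbf{Next}, I substitute the explicit form $F_{f,\phi}(g)=\int_{X(k)\bs V(\BA)} f(vg)[\rho_{\bar\psi}(vg)\phi](1)\,dv$ and use the abelian decomposition $V=X\cdot Y$. The Weil-action formulas of Section 2, together with the formula for $\rho_\psi(n)$ recorded just before Proposition \ref{P2:local}, give $[\rho_{\bar\psi}(xy\, n(x_N))\phi](1)=\bar\psi(x_N Q(y))\,\phi(y)$, where $Q(y)=\sum_{i=1}^m a_i y_i^2$, so that
\[
W = \int_{k\bs\BA}\int_{X(k)\bs X(\BA)}\int_{Y(\BA)} f\bigl(xy\, n(x_N)\bigr)\,\phi(y)\,\bar\psi\bigl(x_N(n^* + Q(y))\bigr)\,dy\,dx\,dx_N.
\]

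\textbf{Then}, I fold the $Y(\BA)$-integration against $Y(k)$ via a fundamental domain $\CF$ for $Y(k)\bs Y(\BA)$: writing $y = \eta y'$ with $\eta\in Y(k)$ and $y'\in\CF$, I use the left $J(k)$-invariance of $f$ to move $\eta$ to the leftmost position. This produces character factors from (i) the Heisenberg commutator $[\eta,x]\in Z$, absorbed by the central character $\psi$ of $f$, and (ii) the Jacobi conjugation $n(x_N)^{-1}\eta\, n(x_N)=\eta\cdot x_\eta(x_N)$ with $x_\eta(x_N)\in X$, absorbed by translation-invariance of $dx$ on $X(k)\bs X(\BA)$. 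Collecting all character factors, each $\eta$-summand becomes a period integral $\int_{U(k)\bs U(\BA)}f(u)\bar\psi_{u^*_\eta}(u)\,du$ with $u^*_\eta$ in the $Y(k)$-orbit of $(n^*,0,1)$, by Proposition \ref{P3:generic}; and since $Y(k)\subset J(k)$ conjugation preserves $f$ by automorphicity, all these period integrals coincide. Hence $W\neq 0$ forces $\int_{U(k)\bs U(\BA)}f(u)\bar\psi_{u^*}(u)\,du\neq 0$ for $u^*=(n^*,0,1)$. \textbf{The main obstacle} is the bookkeeping of Heisenberg 2-cocycles and Jacobi conjugations when moving $\eta$ through $xy\, n(x_N)$; the computation parallels the local verification in Proposition \ref{P2:local} that $\ell\otimes\ell_1$ transforms under $U$ by $\psi_{u^*}$, with the tensor-product decomposition replaced by the Fourier--Jacobi unfolding.
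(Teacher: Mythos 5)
Your argument tracks the paper's proof up to the point where the integral
\[
W=\int_{k\bs\BA}\int_{X(k)\bs X(\BA)}\int_{Y(\BA)} f(xy\,n)\,\phi(y)\,\bar\psi\bigl(x_N(n^*+Q(y))\bigr)\,dy\,dx\,dx_N
\]
is known to be nonzero (your reduction to $g_0=e$ and your formula for $[\rho_{\bar\psi}(xy\,n)\phi](1)$ are both fine). The gap is in the final step. After you fold $\int_{Y(\BA)}=\sum_{\eta\in Y(k)}\int_{Y(k)\bs Y(\BA)}$, each $\eta$-summand is \emph{not} the period $\int_{U(k)\bs U(\BA)}f(u)\bar\psi_{u^*_\eta}(u)\,du$: it retains an integration over $y'\in Y(k)\bs Y(\BA)$ against $\phi(\eta y')$, and the inner $(x,x_N,z)$-integral is the $(U,\psi_{u^*_\eta})$-period of the \emph{adelic} right translate $R(y')f$, which genuinely depends on $y'$. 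Automorphy of $f$ only identifies periods of translates by \emph{rational} elements; it says nothing about $R(y')f$ for $y'\in Y(\BA)$. So $W$ does not factor as a constant times the period of your fixed $f$, and $W\neq 0$ does not force that period to be nonzero. (Note also that if each summand literally equalled the same nonzero period, the sum over the infinite group $Y(k)$ could not converge to $W$.) This is exactly why the proposition only asserts the existence of \emph{some} $f\in\Pi$ with nonvanishing period, rather than a statement about the original $f$.

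The paper's route avoids the folding entirely: since the $y$-integration in $W$ runs over all of $Y(\BA)$ (not a quotient), nonvanishing of $W$ gives a single point $y\in Y(\BA)$ at which the partial integral over $N(k)\bs N(\BA)\times X(k)\bs X(\BA)$ is nonzero; one then commutes $n$ past $y$ (using $yn=x'ny$ with $x'\in X(\BA)$ and $[\rho_{\bar\psi}(xny)\phi](1)=\phi(y)$, which removes the quadratic factor) and takes the desired $f$ to be the right translate of the original function by $y$, which lies in $\Pi$ by hypothesis. To repair your write-up, replace the folding over $Y(k)$ by this pointwise-nonvanishing argument on $Y(\BA)$ and state the conclusion for the translate $R(y)f$.
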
 

\begin{proof} For every $n^*\in k$ let $\psi_{n^*}$ be a character  of  $N(\BA)$ defined by $\psi_{n^*}(n)=\psi(n^*x)$ where
$n=\left(\begin{smallmatrix} 1 & x \\ 0 & 1 \end{smallmatrix}\right)$. 
Since $FJ_{\psi}(\Pi)$ is a Whittaker-generic representation of $M(\BA)$, there is $f' \in \Pi$, $\phi \in S(Y(\BA))$ and $n^* \in k^{\times}$, such that the following integral is non-vanishing:
\[
\int_{N(k) \bs N(\BA)} \int_{X(k) \bs V(\BA)} f'(vn) [\rho_{\bar\psi}(vn) \phi](1) \ol{\psi}_{n*}(n)dv dn \neq 0.
\] 
Since $X(k)\bs V(\BA)$ is  a union of $X(k)\bs X(\BA) \cdot y$, where $y$ runs over $Y(\BA)$, there exists $y\in Y(\BA)$ such that 
\[
\int_{N(k) \bs N(\BA)} \int_{X(k) \bs X(\BA)} f'(xyn)[{\rho}_{\bar\psi}(xyn)\phi](1)\ol{\psi}_{n*}(n)dx dn \neq 0. 
\]
Since $yn=x'ny$, for some element $x'\in X(\BA)$, after changing the variable $x$, the previous expression is equivalent to 
\[
\int_{N(k) \bs N(\BA)} \int_{X(k) \bs X(\BA)} f'(xny)[{\rho}_{\bar\psi}(xny)\phi](1)\ol{\psi}_{n*}(n)dx dn \neq 0.  
\]
Since $[{\rho}_{\bar\psi}(xny)\phi](1)= \phi(y)$, it follows that 
\[
\int_{N(k) \bs N(\BA)} \int_{X(k) \bs X(\BA)} f'(xny)\ol{\psi}_{n*}(n)dx dn \neq 0.  
\]
Let $f$ be the right translate of $f'$ by $y$. The conclusion of the proposition holds for this $f$. 

\end{proof}

\section{Nilpotent orbits} 
  Let $G$ be a central extension of a reductive group over  a local field $k$.
Let $\frak{g}$ be the Lie algebra of $G$. Let $u$ be a nilpotent element in $\frak{g}$. 
Let $\varphi : \frak{sl}_2 \rightarrow \frak{g}$  be a homomorphism corresponding to $u$ by the Jacobson-Morozov theorem. 
 Let $\psi$ be a smooth  character of $k$ and  $\kappa$ the Killing form on $\frak{g}$. Then $u$ defines a function 
 \[ 
 \psi_u: \frak{g}  \rightarrow \mathbb C^{\times} 
 \] 
  by 
 \begin{equation}\label{eq1} 
 \psi_u(x) =\psi (\kappa (u, x)).
  \end{equation}  
 
  Let $\frak{n}$ be a nilpotent subalgebra of $\frak{g}$. If $\kappa(u,[x,y])=0$ for all $x,y\in \frak{n}$ then $\exp(x) \mapsto \psi_u(x)$ defines a character of 
  $N=\exp(\frak{n})$. Abusing notation, we shall use $\psi_u$ to denote this character of $N$. 
  One prominent example arises as follows. Let 
\[  
s = \varphi \left(\begin{matrix}
1 & 0 \\
0 & -1
\end{matrix}\right).
\] 
Let $\frak{g}(j)=\{ x\in \frak{g} ~|~ [s,x]= j\cdot x\}$ be the $s$-{\em weight $j$ space}.  Let 
\[ 
\frak{n}_u=\oplus_{j\geq 2}  \frak{g}(j).
\] 
Since $u\in \frak{g}(-2)$ and two weight spaces are perpendicular unless the weights are opposite, it follows that $\kappa(u,[x,y])=0$ for all $x,y\in \frak{n}_u$. Hence $\psi_u$ defines a character of $N_u=\exp(\frak{n}_u)$. The pair $(N_u, \psi_u)$ is 
precisely the one discussed in the introduction. 

In a similar fashion,  if $k$ is a global field and $\psi$ a smooth character of $\mathbb A$ trivial on $k$, then any
nilpotent element  $u$ in $\frak{g}$ defines a character $\psi_u$ of $N_u(\mathbb A)$ trivial on $N_u(k)$.

\section{Raising nilpotent orbits}  \label{S:raising}

We continue with the set up of the previous section. 
 Let $\mathfrak c \subseteq \mathfrak g$ be the centralizer of 
$\varphi(\mathfrak{sl}_2)$ in $\mathfrak g$. Assume we have a non-trivial map 
\[ 
\varphi_{\mathfrak c} : \mathfrak{sl}_2 \rightarrow \mathfrak c. 
\] 
Let 
\[  
s_{\mathfrak c} =\varphi_{\mathfrak c}  \left(\begin{matrix}
1 & 0 \\
0 & -1
\end{matrix}\right) 
\text{ and } 
u_{\mathfrak c} =\varphi_{\mathfrak c}  \left(\begin{matrix}
0 & 0 \\
\nu & 0
\end{matrix}\right) 
\] 
for some $\nu \neq 0$. The map $\varphi_{\mathfrak c}$ lifts to a map $\varphi_{\mathfrak c} : \widetilde{\SL}_2 \rightarrow G$ where $\widetilde{\SL}_2$ is a central extension 
of $\SL_2$. 

  Let $\mathfrak g(j,l)$ be the subspace of $\mathfrak{g}$  of all elements of 
  $s$-weight $j$ and $s_{\mathfrak c}$-weight $l$. 
  Let $u'=u+u_{\frak{c}}$. Note that $\frak{n}_u$ is a sum of $\frak{g}(j,l)$ such that $j\geq 2$, while 
$\frak{n}_{u'}$ is a sum of $\frak{g}(j,l)$ such that $j+l\geq 2$.

Assume that 
\begin{enumerate}
\item The $s_{\mathfrak c}$-weights $l$ are bounded by 2. 
\item Under the action of $\varphi_{\mathfrak c}(\frak{sl}_{2})$ 
\[ 
\mathfrak g(1)= \mathfrak g(1)^{\varphi_{\mathfrak c}(\frak{sl}_{2})} \oplus m V_2. 
\] 
\item $\dim \mathfrak g(0,2) = \dim \frak{g}(2,2) + 1$. 
\end{enumerate}  
Here $V_2$ denotes the irreducible $2$-dimensional representation of $\frak{sl}_{2}$. 
 Let $\log X=\frak{g}(1,1)$ and $\log Y=\frak{g}(1,-1)$. Let 
$\frak{h}= \log X\oplus  \log Y \oplus \frak{n}_u$ 
and $H=\exp(\frak{h})$.
  Let $N'_u$ be the co-dimension one subgroup of $N_u$ such that $\psi_u$ is trivial on $N'_u$. 
Then $H/ N'_u$ is a Heisenberg group with the center $N_u/N'_u$. Let $J$ be the semi direct product  of $H/ N'_u$  and $M$ where $M$ is (a central extension of) 
 $\varphi_{\mathfrak c}(\widetilde{\SL}_{2})$. 
  It is a Jacobi group. Let $\pi$ be a smooth representation of $G$. Then $\pi_{N_u, \psi_u}$ is a representation of $J$.
Assume that $\pi_{N_u, \psi_u}\neq 0$. 
If the natural action of $M$ on  $FJ_{\psi_u}(\pi_{N_u, \psi_u})=\Hom_{H}(\rho_{\psi_u}, \pi_{N_u, \psi_u})$ is Whittaker-generic, i.e. not a multiple of the trivial 
representation,  then we can raise the orbit. More precisely, we have the following:

\begin{prop}\label{P5:local} 
 Assume that $k$ is a non-archimedean local field. Assume that the conditions (1-3) above are all satisfied. 
Let $\pi$ be a smooth representation of $G$ such that $\pi_{N_u, \psi_u}\neq 0$. Assume that $FJ_{\psi_u}(\pi_{N_u, \psi_u})$ 
 is a Whittaker-generic representation of $M$. Then $\pi_{N_{u'}, \psi_{u'}}\neq 0$, where $u'=u+u_{\mathfrak c}$, 
for some choice of  nilpotent $u_{\mathfrak c}\neq 0$ in $\varphi_{\mathfrak c}(\frak{sl}_{2})$.
\end{prop}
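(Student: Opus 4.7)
The plan is to view $\pi_{N_u, \psi_u}$ as a smooth representation of the Jacobi group $J = M \ltimes H/N'_u$, apply Proposition~\ref{P2:local} to obtain a generic functional on $U \subset J$, and translate this back into a nonvanishing generalized Whittaker co-invariant of $\pi$ attached to $u' = u + u_{\mathfrak c}$. First, $\pi_{N_u, \psi_u}$ is a smooth $J$-module on which the Heisenberg center $Z = N_u/N'_u$ acts by $\psi$: the factor $M = \varphi_{\mathfrak c}(\widetilde{\SL}_2)$ stabilizes $(N_u, \psi_u)$ because it commutes with $\varphi(\mathfrak{sl}_2)$; the subgroup $H$ normalizes $N_u$ since $[\log X + \log Y, \mathfrak n_u] \subseteq \mathfrak n_u$; and the identification $N_u/N'_u \cong k$ via $\exp(x) \mapsto \kappa(u, x)$ makes the induced central character on $Z$ equal to $\psi$.

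Next I apply Proposition~\ref{P2:local} to this $J$-module. The Whittaker-genericity of $FJ_{\psi_u}(\pi_{N_u, \psi_u})$ yields some $n^* \in k^{\times}$ with
\[
(\pi_{N_u, \psi_u})_{U, \psi_{u^*}} \neq 0, \qquad u^* = (n^*, 0, 1) \in \mathfrak u^*.
\]
Under the embedding of $J$ into $G/N'_u$, the abelian subgroup $U = \exp(\mathfrak n) \cdot \exp(\log X) \cdot Z$ lifts to
\[
\tilde U := \varphi_{\mathfrak c}(\exp \mathfrak n) \cdot \exp(\mathfrak g(1, 1)) \cdot N_u \subset G.
\]
Comparing the Section~3 pairing $\langle (n, x, z), (n^*, x^*, z^*) \rangle = z z^* + 2\, x \cdot x^* + m\, n n^*$ with the Killing form on $\mathfrak g$, the character $\psi_{u^*}$ corresponds to $\psi_{u'}|_{\tilde U}$ with $u' = u + u_{\mathfrak c}$ and $u_{\mathfrak c} = \varphi_{\mathfrak c}\!\left(\begin{smallmatrix} 0 & 0 \\ \nu & 0 \end{smallmatrix}\right)$ for some $\nu \in k^{\times}$ determined by $n^*$ and the normalizations. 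In particular $\pi_{\tilde U, \psi_{u'}|_{\tilde U}} \neq 0$.

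It remains to upgrade $\tilde U$ to $N_{u'}$. Since $u$ and $u_{\mathfrak c}$ commute, $s' := s + s_{\mathfrak c}$ serves as the Jacobson--Morozov semisimple element for $u'$, so $\mathfrak n_{u'} = \bigoplus_{j + l \geq 2} \mathfrak g(j, l)$. Condition~(1) bounds $l \leq 2$, so the $j \geq 2$ part of $\mathfrak n_{u'}$ equals $\mathfrak n_u$. Condition~(2) forces the $s_{\mathfrak c}$-weights on $\mathfrak g(1)$ to lie in $\{-1, 0, 1\}$, so $\mathfrak g(1, l) = 0$ for $|l| \geq 2$ and the only $j = 1$ contribution is $\mathfrak g(1, 1)$. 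The only $j = 0$ contribution is $\mathfrak g(0, 2)$, giving $\mathfrak n_{u'} = \mathfrak n_u \oplus \mathfrak g(1, 1) \oplus \mathfrak g(0, 2)$. The remaining discrepancy between $\tilde U$ and $N_{u'}$ is the quotient $\mathfrak g(0, 2) / \varphi_{\mathfrak c}(\mathfrak n)$, whose dimension equals $\dim \mathfrak g(2, 2)$ by Condition~(3); a standard root-exchange manoeuvre then promotes $\pi_{\tilde U, \psi_{u'}|_{\tilde U}} \neq 0$ to $\pi_{N_{u'}, \psi_{u'}} \neq 0$ by trading the trivial characters on the extra $\mathfrak g(0, 2)$-directions against the dual $\mathfrak g(2, 2)$-directions. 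The main obstacle is precisely this last exchange step, where Condition~(3) is essential; the earlier reductions are direct translations of the formalism of Sections~2--4.
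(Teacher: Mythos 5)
Your setup and your application of Proposition \ref{P2:local} are correct and coincide with the paper's first step: $\pi_{N_u,\psi_u}$ is a smooth $J$-module with central character $\psi$, and Whittaker-genericity of the Fourier--Jacobi module produces a nonzero functional on $U=N\,X\,N_u$ transforming by a character that agrees with $\psi_{u'}$ there. The gap is in your final step. The identity $\mathfrak n_{u'}=\mathfrak n_u\oplus\mathfrak g(1,1)\oplus\mathfrak g(0,2)$ is false: since $\mathfrak n_{u'}=\bigoplus_{j+l\geq 2}\mathfrak g(j,l)$, the summands $\mathfrak g(2,-2)$ (and $\mathfrak g(2,-1)$, if nonzero) of $\mathfrak n_u$ are \emph{not} contained in $\mathfrak n_{u'}$, and $\dim\mathfrak g(2,-2)=\dim\mathfrak g(2,2)=\dim\mathfrak g(0,2)-1$ by condition (3), which does not vanish in the cases of interest. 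So passing from $\tilde U$ to $N_{u'}$ is not a pure enlargement by $\mathfrak g(0,2)/\mathfrak n_{\mathfrak c}$: one must simultaneously relinquish $\mathfrak g(2,-2)$, and that is exactly why an exchange is needed. Your stated exchange partners are also wrong: $\mathfrak g(2,2)$ lies in both $\tilde{\mathfrak u}$ and $\mathfrak n_{u'}$, and $\psi_{u'}$ is trivial on it (it is Killing-dual to $\mathfrak g(-2,-2)$, which does not meet the line through $u'$), so there is nothing to trade against ``the dual $\mathfrak g(2,2)$-directions.'' Worse, in your (incorrect) accounting both halves of the would-be Heisenberg polarization sit inside $N_{u'}$, which would contradict $\psi_{u'}$ being a character there.

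The step you are missing is the actual content of the paper's proof. Condition (3) gives $\mathfrak g(0,2)=\mathfrak n_{\mathfrak c}\oplus[u,\mathfrak g(2,2)]$. One forms $\mathfrak u'$ from $\mathfrak u$ by deleting $\mathfrak g(2,-2)$ and adjoining $[u,\mathfrak g(2,2)]$, checks that $\mathfrak u'$ is a subalgebra containing $\mathfrak n_{u'}$ and that $\psi_{u'}$ is a genuine character of $U'=\exp(\mathfrak u')$ agreeing with $\tilde\psi$ on $U\cap U'$ (Lemma \ref{L5:char} --- a verification absent from your write-up), and then proves that $[u,\mathfrak g(2,2)]$ and $\mathfrak g(2,-2)$ pair non-degenerately under $(x,y)\mapsto\kappa(u',[x,y])=\kappa(y,[u,x])$, because $[u,\cdot]$ maps $[u,\mathfrak g(2,2)]$ onto $\mathfrak g(-2,2)$, the Killing dual of $\mathfrak g(2,-2)$ (Lemma \ref{L:non-deg}). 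This exhibits $\exp(\mathfrak u+\mathfrak u')$, modulo the kernel of $\psi_{u'}$ on $U\cap U'$, as a Heisenberg group, and Proposition \ref{P1:local} then transports the nonvanishing functional from $U$ to $U'\supseteq N_{u'}$. Without identifying $\mathfrak g(2,-2)$ as the space to be exchanged away and without the non-degeneracy of this specific pairing, the ``standard root-exchange manoeuvre'' you invoke has no content, so the proof as written does not close.
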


\begin{proof}
 Let  $\mathfrak{n} \subset \varphi_{\mathfrak c}( \mathfrak{sl}_{2})$ be the subspace of 
  elements of $s_{\mathfrak c}$-weight 2 and let $N= \exp (\mathfrak{n}) \subset M$. 
  Let $\frak{u}= \frak{n} \oplus \log X \oplus \mathfrak{n}_{u}$ and  $U=\exp(\frak{u})$.

Since $FJ_{\psi_u}(\pi_{N_u, \psi_u})$ is a Whittaker-generic representation of $M$, by Proposition \ref{P2:local},
$\pi_{U, \tilde{\psi}}\neq 0$ for some character $\tilde{\psi}$ of $U$, equal to $\psi_u$ on $N_u$, trivial on $X$ and equal to 
$\psi_{u_{\mathfrak c}}$ on $N$ for some choice of  
nilpotent $u_{\mathfrak c}\neq 0$ in $\varphi_{\mathfrak c}(\frak{sl}_{2})$.  In the remainder of the proof we shall ``transfer'' the character $\tilde{\psi}$ from $U$ to $N_{u'}$ using a Heisenberg group that appears as a quotient of $UN_{u'}$. 

Consider the sum $\oplus_j \frak{g}(j,2)$ over all $j$.  It is an $\varphi(\frak{sl}_2)$-module. By representation theory of 
$\frak{sl}_2$, the map $x \rightarrow [u,x]$ is an injection of $\frak{g}(2,2)$ into $\frak{g}(0,2)$ and the complement of the image is spanned by 
$\varphi(\frak{sl}_2)$-fixed vectors. Since $\mathfrak{n}_{\mathfrak c}$ is fixed by $\varphi(\frak{sl}_2)$ and 
 $\dim \mathfrak g(0,2) = \dim \frak{g}(2,2) + 1$ it follows that 
\[ 
\mathfrak g(0,2) = \mathfrak{n}_{\mathfrak c} \oplus [u, \frak{g}(2,2)].
\]  
   Let $\frak{u}'$ be the space obtained from $\frak{u}$ by removing 
 the summand $\frak{g}(2,-2)$ and adding $[u, \frak{g}(2,2)]$, so  $\frak{u}'$ is a direct sum of 
 $\frak{g}(0,2)$, $\frak{g}(1,1)$ and $\frak{g}(j,l)$ for all $j\geq 2$ but not $(j,l)=(2,-2)$. 
Since $\frak{n}_{u'}$ is a sum of  $\frak{g}(j,l)$ with $j+l \geq 2$,  and $|l| \leq 2$ by the first assumption,  it follows that $\frak{u}'$ is a subalgebra containing 
 $\frak{n}_{u'}$. 
 
 \begin{lem} \label{L5:char} Let $U'=\exp(\frak{u}')$. Then $\psi_{u'}$ is a character of $U'$, equal to $\tilde{\psi}$ on $U'\cap U$. 
  \end{lem} 
 \begin{proof} To prove that $\psi_{u'}$ is a character, we need to check $\kappa(u',[x,y])=0$ for all $x,y\in \frak{u}'$. Clearly, It suffices to prove vanishing when $x\in\frak{g}(j,l)$ and $y\in\frak{g}(j',l')$. 
Recall that  $u\in \frak{g}(-2,0)$ and $u_{\mathfrak c}\in \frak{g}(0,-2)$. If $\kappa(u_{\mathfrak c}, [x,y])\neq 0$, then $j+j'=0$. Hence $j=j'=0$ and $x,y\in \frak{g}(0,2)$. But then $[x,y]=0$, a 
contradiction.  If $\kappa(u, [x,y])\neq 0$, then $j+j'=2$ and $l+l'=0$. Hence up to permutation, $x\in\frak{g}(0,2)$ and $y\in\frak{g}(2,-2)$. 
Again a contradiction, since $\frak{g}(2,-2)$ is not in $\frak{u}'$.  Hence $\psi_{u'}$ is a character of $U'$. 

Note that $U'\cap U= N_{\mathfrak c}X ( N_u \cap U')$. Since both, $\psi_{u'}$ and $\tilde{\psi}$,  are equal to 
$\psi_{u_{\mathfrak c}}$, $1$ and $\psi_u$ on the three factors, respectively, it follows that  $\psi_{u'}= \tilde{\psi}$ on $U'\cap U$, as desired. 
\end{proof}

 Let  $\frak{z}'=\frak{u} \cap \frak{u}'$ and let 
 \[ 
 \frak h' = \frak{u} + \frak{u}' = [u, \frak{g}(2,2)] \oplus   \frak{g}(2,-2) \oplus \frak{z}'. 
 \]

\begin{lem} \label{L:non-deg} 
The pairing $\kappa(u',[x,y])$ where  $x\in [u, \frak{g}(2,2)]$ and $y\in \frak{g}(2,-2)$ is non-degenerate. 
\end{lem} 
\begin{proof}  Since $[x,y]\in \frak{g}(2)$ and $u_i \in \frak{g}(0)$, 
$\kappa(u',[x,y]) =\kappa(u,[x,y])$.  By the invariance of the Killing form, we have 
\[
\kappa(u,[x,y])  =\kappa(y,[u,x]). 
\] 
The pairing is non-degenerate since $[u,[u, \frak{g}(2,2)]]=\frak{g}(-2,2)$,  and the Killing form gives a non-degenerate paring between opposite weight spaces. 
\end{proof} 

Let $H'=\exp(\frak{h}')$ and $Z'=\exp(\frak{z'})$, i.e. $Z'=U\cap U'$.  
Let $Z''$ be the co-dimension one subgroup of $Z'$ where the character $\psi_{u'}=\tilde{\psi}$ is trivial.  
Lemma  \ref{L:non-deg} implies that $H'/Z''$ is a Heisenberg group. Since $\pi_{U,\tilde{\psi}}\neq 0$, it follows that 
 \[ 
\pi_{Z',\tilde{\psi}}=  \pi_{Z',\psi_{u'}}\neq 0. 
\] 
 Proposition \ref{P1:local} implies that 
$\pi_{U', \tilde{\psi}'}\neq 0$ for any character $\tilde{\psi}'$ of $U'$ extending $\psi_{u'}$ on $Z'$, in particular $\psi_{u'}$.  
  Since $N_{u'} \subseteq U'$, the proposition follows. 
\end{proof} 

Recall that $G$ is a central extension of a linear group. The map  $\varphi_{\mathfrak c}$ lifts to a map $\varphi_{\mathfrak c} : \widetilde{\SL}_2 \rightarrow G$ where 
$\widetilde{\SL}_2$ is a central extension 
of $\SL_2$ by $\mu_d$, the group of $d$-th roots of 1. A representation of $\widetilde{\SL}_2$ is called {\em genuine} if $\mu_d$ acts faithfully. 

\begin{cor}\label{cor:local}
Assume that $k$ is a non-archimedean local field. 
 Assume that the conditions (1-3) above are all satisfied. 
Let $\pi$ be a smooth representation of $G$ such that $\pi_{N_u, \psi_u}\neq 0$.  
Assume, furthermore,  that the restriction  of $\pi$  to $\varphi_{\mathfrak c}(\widetilde{\SL}_2)$ is genuine. 
If  $d$ is $1$ and $m$ is odd, or $d$ is $2$ and $m$ is even, or $d > 2$ and $m$ is arbitrary, then 
 $\pi_{N_{u'}, \psi_{u'}}\neq 0$, where $u'=u+u_{\mathfrak c}$, for some choice of  
nilpotent $u_{\mathfrak c}\neq 0$ in $\varphi_{\mathfrak c}(\frak{sl}_{2})$.
\end{cor}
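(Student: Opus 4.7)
The plan is to reduce to Proposition \ref{P5:local}. Since $\pi_{N_u, \psi_u} \neq 0$, the isomorphism of Proposition \ref{P1:local} gives $\FJ_{\psi_u}(\pi_{N_u, \psi_u}) \neq 0$, so the only remaining hypothesis to verify is that $\FJ_{\psi_u}(\pi_{N_u, \psi_u})$ is not a multiple of the trivial representation of $M$. I will do this by producing a central element of $M$ whose image acts non-trivially on $\FJ_{\psi_u}(\pi_{N_u, \psi_u})$.

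Two sources of central action are available: the kernel $\mu_d$ of $\widetilde{\SL}_2 \to \SL_2$, which acts faithfully on $\pi$ by the genuineness hypothesis, and the kernel $\mu_2$ of the metaplectic cover on which the Weil representation $\rho_{\psi_u}$ is realized. A standard calculation using the diagonal embedding of $\SL_2$ into the $m$ long-root $\SL_2$'s of $\Sp_{2m}$ shows that the pullback of $\Mp_{2m} \to \Sp_{2m}$ to this $\SL_2$ is split when $m$ is even and equals $\Mp_2 \to \SL_2$ when $m$ is odd. Consequently, the restriction of $\rho_{\psi_u}$ to the relevant cover of $\SL_2$ inside $M$ is a linear (non-genuine) representation when $m$ is even and a genuine representation of $\Mp_2$ when $m$ is odd. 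Applying the decomposition $\pi_{N_u, \psi_u} \cong \FJ_{\psi_u}(\pi_{N_u, \psi_u}) \otimes \rho_{\psi_u}$ of Proposition \ref{P1:local}, the central character on $\FJ_{\psi_u}(\pi_{N_u, \psi_u})$ is obtained by dividing the central character on $\pi_{N_u, \psi_u}$ by that on $\rho_{\psi_u}$.

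I will then dispatch the three cases. If $d = 1$ and $m$ is odd, the cover $M$ must contain $\Mp_2$; its central $\mu_2$ acts by the sign on $\rho_{\psi_u}$ but trivially on $\pi|_{\SL_2}$ (the ambient $G$ is linear here), hence by the sign on $\FJ_{\psi_u}(\pi_{N_u, \psi_u})$. If $d = 2$ and $m$ is even, the roles are reversed: $\mu_2$ acts by sign on $\pi$ (genuineness) and trivially on $\rho_{\psi_u}$ (Weil rep linear), so again by sign on $\FJ_{\psi_u}(\pi_{N_u, \psi_u})$. If $d > 2$, the action of $\mu_d$ on $\pi$ is by a character of order exactly $d$, while its action on $\rho_{\psi_u}$ factors through $\mu_d \to \mu_2$ and is therefore of order at most $2 < d$; the quotient character on $\FJ_{\psi_u}(\pi_{N_u, \psi_u})$ is non-trivial. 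In every case $\FJ_{\psi_u}(\pi_{N_u, \psi_u})$ is not a multiple of the trivial representation of $M$, and Proposition \ref{P5:local} immediately yields $\pi_{N_{u'}, \psi_{u'}} \neq 0$.

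The main technical obstacle is the bookkeeping of the simultaneous covers: the cover $\widetilde{\SL}_2$ inherited from $G$ (with kernel $\mu_d$) and the metaplectic cover $\Mp_2$ required for $\rho_{\psi_u}$ must be embedded compatibly in a common cover $M$, and one must verify that the two central characters on this common cover are genuinely distinct in precisely the three enumerated cases. The remaining cases ($d = 1$ with $m$ even, and $d = 2$ with $m$ odd) are exactly those in which the two central characters cancel, so this method gives no information and is reflected in the corollary's hypothesis.
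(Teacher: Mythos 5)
Your proposal is correct and follows essentially the same route as the paper: both arguments rest on the fact that $M$ acts on $\rho_{\psi_u}$ through its linear quotient when $m$ is even and through a genuine representation of a $2$-fold cover when $m$ is odd, then compare this with the genuineness hypothesis on $\pi$ to conclude that the central character of $FJ_{\psi_u}(\pi_{N_u,\psi_u})$ is non-trivial in each of the three enumerated cases, so that it cannot be a multiple of the trivial representation (the paper's definition of Whittaker-generic), and Proposition \ref{P5:local} applies. Your write-up merely makes explicit the case analysis and the central-character bookkeeping that the paper compresses into one sentence.
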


\begin{proof} By properties of representations of Jacobi groups, the group $M$ acts on $\rho_{\psi_u}$ by its linear quotient if $m$ is even, or 2-fold central extension if $m$ is odd. Thus the conditions assure that 
$FJ_{\psi_u}(\pi_{N_u, \psi_u})$ is a genuine representation of non-trivial central extension of $\SL_2$. 
Hence, it is a Whittaker-generic representation of $M$ and Corollary follows from Proposition \ref{P5:local}.
\end{proof}

Now assume that $k$ is a global field. Let $\Pi$ be a space of smooth functions on $G(k) \backslash G(\mathbb A)$ stable under the action of 
$G(\mathbb A)$ by right translations. Let $\Pi_{N_u, \psi_u}$ be the space of smooth functions on $J(k) \backslash J(\mathbb A)$ consisting of 
\[ 
\tilde{f}(g)= \int_{N_u(k)\backslash N_u(\mathbb A)} f(ng) \bar{\psi}_u(n) dn
\]
where $f$ runs through $\Pi$. Similarly one can define $\Pi_{U, \tilde{\psi}}$, where $\tilde{\psi}$ is a character of $U$.
Recall from Section 5 that one then can define $FJ_{\psi_u}(\Pi_{N_u, \psi_u})$ which is the representation of $M(\BA)$.

The following global version of Proposition \ref{P5:local} is proved 
 using Propositions \ref{P1:global} and \ref{P2:global}  (instead of Propositions \ref{P1:local} and \ref{P2:local}). We omit the details.

\begin{prop} \label{P5:global} 
Assume that $k$ is a global field.
Assume that the conditions (1-3) above are all satisfied. 
Let $\Pi$ be a a space of smooth functions on $G(k) \backslash G(\mathbb A)$ stable under the action of 
$G(\mathbb A)$ by right translations such that  
$\Pi_{N_u, \psi_u}\neq 0$. 
Assume that $FJ_{\psi_u}(\Pi_{N_u, \psi_u})$ is a Whittaker-generic representation of $M(\BA)$. 
Then $\Pi_{N_{u'}, \psi_{u'}} \neq 0$, 
where $u'=u+u_{\frak{c}}$, for some choice of  
nilpotent $u_{\mathfrak c}\neq 0$ in $\frak{sl}_{2,{\mathfrak c}}$. 
\end{prop}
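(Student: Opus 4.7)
The plan is to follow the proof of Proposition \ref{P5:local} line by line, substituting Proposition \ref{P2:global} for Proposition \ref{P2:local} at the beginning and Proposition \ref{P1:global} for Proposition \ref{P1:local} at the end. The intermediate algebraic ingredients — the subalgebras $\frak u, \frak u', \frak z', \frak h'$, the identification of $H'/Z''$ as a Heisenberg group with center $Z'/Z''$, and Lemmas \ref{L5:char} and \ref{L:non-deg} — are purely Lie-theoretic, depend only on hypotheses (1)--(3), and carry over to the global setting unchanged.

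First I would apply Proposition \ref{P2:global} with $\Pi_{N_u,\psi_u}$ in the role of the $J(\mathbb{A})$-stable space of automorphic functions. Whittaker-genericity of $FJ_{\psi_u}(\Pi_{N_u,\psi_u})$ produces some $f\in\Pi$, a non-zero nilpotent $u_{\frak c}\in\varphi_{\frak c}(\frak{sl}_2)$, and the character $\tilde\psi$ of $U$ (equal to $\psi_u$ on $N_u$, trivial on $X$, and $\psi_{u_{\frak c}}$ on $N$) such that
\[
\int_{U(k)\backslash U(\mathbb{A})} f(u)\,\bar{\tilde\psi}(u)\,du \neq 0.
\]
By Lemmas \ref{L5:char} and \ref{L:non-deg}, $\psi_{u'}$ is a character of $U'=\exp(\frak u')$ restricting to $\tilde\psi$ on $Z'=U\cap U'$, and $H'/Z''$ is a Heisenberg group with center $Z'/Z''$ on which $\psi_{u'}$ is the central character. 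Integrating the displayed period in stages along $Z'\subseteq U$ shows that the $(Z'(k)\backslash Z'(\mathbb{A}),\psi_{u'})$-period of $f$ is non-zero, so the restriction of $f$ to $H'(\mathbb{A})$, viewed modulo $Z''(\mathbb{A})$, gives a non-zero element in the subspace $\Theta_{\psi_{u'}}$ of the global Heisenberg representation for $H'/Z''$. Applying Proposition \ref{P1:global} to the $H'(\mathbb{A})$-invariant subspace so generated — and repeating the manipulations at the end of the proof of Proposition \ref{P2:global} — one obtains a function $f'\in\Pi$ (a suitable right translate of $f$) whose $(U'(k)\backslash U'(\mathbb{A}),\psi_{u'})$-period is non-zero. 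Since $N_{u'}\subseteq U'$ and $\psi_{u'}|_{N_{u'}}$ is exactly the character attached to $u'$, integration in stages along $N_{u'}\backslash U'$ then delivers $\Pi_{N_{u'},\psi_{u'}}\neq 0$.

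The hard part will be the bookkeeping of translates in this last step. Proposition \ref{P1:global} a priori produces, for each $y\in Y(k)$, a function with non-zero period against \emph{some} character in a $Y(k)$-orbit of characters, and one must argue — using the $Y(k)$-invariance of the polynomial $\Delta$ from Proposition \ref{P3:generic}, together with the $J(k)$-left invariance of functions in $\Pi$ — that a single translate can be chosen so as to realize precisely the character $\psi_{u'}$. This is the same subtlety that appears in the final paragraph of the proof of Proposition \ref{P2:global}, and the same ad hoc manipulation (replacing $f$ by its right translate by an appropriate element of $Y(\mathbb{A})$ and exploiting the Heisenberg commutator identity $\theta(xy)=\theta(x)\bar\psi_y(x)$) resolves it.
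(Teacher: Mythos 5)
Your outline is correct and is precisely the argument the paper has in mind (the paper itself omits the details, saying only that one substitutes Propositions \ref{P1:global} and \ref{P2:global} for their local counterparts in the proof of Proposition \ref{P5:local}); the intermediate Lie-theoretic steps, Lemmas \ref{L5:char} and \ref{L:non-deg}, are indeed field-independent and carry over verbatim. The only correction worth making is that the ``hard part'' you flag in the last paragraph is not actually there: Proposition \ref{P1:global} asserts non-vanishing of the $(X,\psi_y)$-period for \emph{every} $y\in Y(k)$ individually, not merely for some character in a $Y(k)$-orbit, and since $\psi_{u'}$ is trivial on $\exp([u,\frak{g}(2,2)])$ (because $\kappa(u',[u,w])=\kappa([u',u],w)=0$), the character you need is exactly the $y=1$ case of that proposition; no appeal to the invariant $\Delta$ of Proposition \ref{P3:generic} is required at this stage, and the translate-by-$Y(\mathbb{A})$ manipulation you describe is already internal to the proof of Proposition \ref{P1:global} rather than an extra step you must supply.
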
 

We also have the following global analogue of Corollary \ref{cor:local}, with a similar argument (using Proposition \ref{P5:global} instead of Proposition \ref{P5:local}).

\begin{cor}\label{cor:global}
Assume that $k$ is a global field and $G(\BA)$ is a $d$-fold central extension of a connected reductive group. 
Assume that the conditions (1-3) above are all satisfied. 
Let $\Pi$ be a a space of smooth functions on $G(k) \backslash G(\mathbb A)$ stable under the action of 
$G(\mathbb A)$ by right translations such that  
$\Pi_{N_u, \psi_u}\neq 0$. 
Assume further that either $d$ is $1$ and $m$ is odd, or $d$ is $2$ and $m$ is even, or $d > 2$ and $m$ is arbitrary.
Then $\Pi_{N_{u'}, \psi_{u'}} \neq 0$, 
where $u'=u+u_{\frak{c}}$, for some choice of  
nilpotent $u_{\mathfrak c}\neq 0$ in $\frak{sl}_{2,{\mathfrak c}}$.
\end{cor}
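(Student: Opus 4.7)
The plan is to follow exactly the structure of the proof of Corollary \ref{cor:local}, replacing Proposition \ref{P5:local} with Proposition \ref{P5:global}. By Proposition \ref{P5:global}, it suffices to verify that $FJ_{\psi_u}(\Pi_{N_u, \psi_u})$ is a Whittaker-generic representation of $M(\BA)$ under the stated conditions; once this is established, the desired nonvanishing $\Pi_{N_{u'}, \psi_{u'}} \neq 0$ is immediate from that proposition.

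First I would identify the cover of $\SL_2(\BA)$ through which $M(\BA)$ acts on $FJ_{\psi_u}(\Pi_{N_u, \psi_u})$. The key observation from the local setting carries over: via the tensor decomposition underlying the Fourier--Jacobi construction (Proposition \ref{P1:global} together with the explicit formula for $F_{f,\phi}$), the $M(\BA)$-action on $FJ_{\psi_u}(\Pi_{N_u, \psi_u})$ is obtained from the $M(\BA)$-action on $\Pi_{N_u, \psi_u}$ twisted by the contragredient Weil action. The group $M(\BA)$ acts on $\rho_{\bar\psi_u}$ through its linear quotient when $m$ is even and through its metaplectic double cover when $m$ is odd, while on $\Pi_{N_u, \psi_u}$ it inherits a genuine action of its $d$-fold cover from $G(\BA)$. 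Inspection of the three cases $(d=1,\ m\ \text{odd})$, $(d=2,\ m\ \text{even})$, and $(d > 2,\ m\ \text{arbitrary})$ shows that in each case the combined central extension through which $M(\BA)$ acts on $FJ_{\psi_u}(\Pi_{N_u, \psi_u})$ is a nontrivial cover of $\SL_2(\BA)$, and the action is genuine.

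Next I would invoke the global fact that any genuine automorphic representation of a nontrivial central extension of $\SL_2(\BA)$ is Whittaker-generic. Combined with the preceding covering analysis, this supplies the hypothesis of Proposition \ref{P5:global}, yielding the desired $\Pi_{N_{u'}, \psi_{u'}} \neq 0$ for an appropriate choice of nilpotent $u_{\mathfrak c} \neq 0$ in $\varphi_{\mathfrak c}(\frak{sl}_{2})$.

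The main obstacle, and the only substantive difference from the local argument, is the global Whittaker-genericity statement for genuine automorphic representations of nontrivial covers of $\SL_2$. For the metaplectic double cover this is classical, arising from the Shimura--Waldspurger correspondence; for higher-degree covers one must cite the corresponding structural result on genuine automorphic representations of $\SL_2$-type covers. Beyond this one invocation, the argument is a routine transcription of the proof of Corollary \ref{cor:local} into the adelic language, using Propositions \ref{P1:global} and \ref{P2:global} in place of Propositions \ref{P1:local} and \ref{P2:local}.
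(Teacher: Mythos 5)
Your overall route coincides with the paper's: the paper proves Corollary \ref{cor:global} exactly by transcribing the proof of Corollary \ref{cor:local}, i.e.\ by noting that $M(\BA)$ acts on the Weil representation through its linear quotient when $m$ is even and through the metaplectic double cover when $m$ is odd, so that under each of the three hypotheses on $(d,m)$ the module $FJ_{\psi_u}(\Pi_{N_u,\psi_u})$ is a genuine representation of a nontrivial central extension of $\SL_2(\BA)$, hence Whittaker-generic, after which Proposition \ref{P5:global} applies. So the structure of your argument is the intended one.

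The one step whose justification is off-target is the global assertion that ``genuine implies Whittaker-generic.'' The Shimura--Waldspurger correspondence is not the right tool here: $FJ_{\psi_u}(\Pi_{N_u,\psi_u})$ is merely a right-translation-stable space of smooth functions on $M(k)\backslash M(\BA)$, not an irreducible cuspidal (or even square-integrable) automorphic representation, and in any case Waldspurger's theory allows the Whittaker coefficient attached to a \emph{fixed} character $\psi_{n^*}$ of an irreducible genuine cuspidal representation to vanish. What is actually needed --- nonvanishing for \emph{some} $n^*\in k^{\times}$ --- follows from an elementary argument that works for arbitrary smooth right-stable spaces: if all coefficients with $n^*\neq 0$ vanished, the Fourier expansion along $N$ would make every function in the space left $N(\BA)$-invariant; combined with left $M(k)$-invariance this gives invariance under the subgroup generated by the canonical lifts of $N(\BA)$ and $\bar{N}(\BA)$, which, the cover being nontrivial, meets the central $\mu_d$ nontrivially; a genuine function invariant under a nontrivial central element is identically zero. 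Replacing your citation by this Fourier-expansion argument, the proof is complete and is the paper's proof.
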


\section{Symplectic-orthogonal groups} 
Let $W$ be a vector space over a $p$-adic field $k$ equipped with a non-degenerate symmetric or skew-symmetric bilinear form $\langle , \rangle$. 
Let $\frak{g}$ be the Lie algebra of endomorphisms of $W$ preserving the bilinear form.  Let 
\[ 
i : W \otimes W \rightarrow \End(W)
\] 
be a map defined by 
\[ 
i(x\otimes y)(z) = \langle y,z\rangle x -  \langle z, x\rangle y 
\] 
for all $x,y,z\in W$.  
 If  $\langle , \rangle$ is symmetric then $i$ gives a bijection between  $\wedge^2 W$,  the exterior square of $W$,  and  $\frak{g}$.  
If $\langle , \rangle$ is skew-symmetric then then $i$ gives a bijection between $S^2 (W)$,  the symmetric  square of $W$,  and  $\frak{g}$.

 Let $V_j$ be irreducible representation of 
$\frak{sl}_2$ of dimension $j$. Then $V_j$ admits a unique, up to a non-zero scalar, $\frak{sl}_2$-invariant bilinear form $\langle , \rangle_j$. 
This form is symmetric if $j$ is odd, and skew-symmetric if $j$ is even. We normalize $\langle , \rangle_j$ as follows. 
Let $v_j\in V_j$ be a non-zero highest weight vector. Let $u=\left(\begin{smallmatrix} 0 & 0 \\ 1 & 0 \end{smallmatrix}\right)$. 
Then $u^{j-1} v_j$  is a lowest weight vector. Hence the number 
\[ 
a=\langle v_j, u^{j-1} v_j\rangle_j 
\] 
 is non-zero. This number depends on the choice of $v_j$, however, its square class does not. 
In order to distinguish the forms, we shall write $\langle,\rangle_j^a$ for this form or simply $\langle , \rangle_j$ if  $a$ is in the class of 1. This normalization differs from the one in \cite{W01} by the factor $(-1)^{[(i-1)/2]}$.

Let $\varphi : \frak{sl}_2 \rightarrow \mathfrak g$. 
Then, under the action of $\varphi(\frak{sl}_2)$, the symplectic space $W$ can be decomposed as 
\begin{align}\label{eq3}
\begin{split}
W & = \oplus_j V_j \otimes U_j,\\
\langle , \rangle & =\oplus_j \langle,\rangle_j \otimes ( , )_j
\end{split}
\end{align}
where 
\[ 
U_j = \Hom_{\frak{sl}_2}(V_j,V)
\] 
 with the form $(,)_j$. The nilpotent orbit of $u$ gives  a partition $\ul{p}$ of $\dim(W)$ in which $j$ appears with multiplicity $\dim U_j$. 
If the form on $W$ is symmetric then the forms $(,)_j$ and $\langle , \rangle_j$  have the same signs. Otherwise the two forms have  
 different  signs.  If $(,)_j$ is skew-symmetric then $\dim(U_j)$ is even. Thus $j$ in $\ul{p}$ must have even multiplicity. The conjugacy class of $u$ is determined 
 by the partition $\ul{p}$ and isomorphism classes of $(U_j, (,)_j)$.

Put $W_j=U_j\otimes V_j$, in the decomposition (\ref{eq3}). 
If $\langle , \rangle$ is symmetric, then 
\begin{align}\label{eq4}
\begin{split} 
\frak{g}\cong \wedge^2(W) \cong (\oplus_j\wedge^2(W_j)) \oplus (\oplus_{i<j} W_i \otimes W_j),\\
\wedge^2(W_j)\cong \wedge^2(V_j) \otimes S^2(U_j)\oplus S^2(V_j) \otimes \wedge^2(U_j). 
\end{split} 
\end{align}

If $\langle , \rangle$ is skew-symmetric, then 
\begin{align}\label{eq5}
\begin{split} 
\frak{g}\cong S^2(W) \cong (\oplus_i S^2(W_j)) \oplus (\oplus_{i<j} W_i \otimes W_j), \\
S^2(W_j)\cong S^2(V_j) \otimes S^2(U_j)\oplus \wedge^2(V_j) \otimes \wedge^2(U_j). 
\end{split}
\end{align} 

If the decomposition \eqref{eq3} of $W$ contains a non-trivial summand  such that the form $(,)_i$ is skew-symmetric, then 
$\Sp(U_i)$ acts naturally on $W$  preserving the form $\langle, \rangle$. This gives an inclusion of $\Sp(U_i)$ into $G$. The adjoint action of 
$\Sp(U_i)$ on $\mathfrak g$ commutes with $\varphi(\mathfrak{sl}_2)$. In particular,  $\Sp(U_i)$  acts on each $s$-weight space $\mathfrak g(j)$.

\begin{lem}\label{L:decomposition}
Assume that the decomposition \eqref{eq3} of $W$ contains a non-trivial summand  such that the form $(,)_i$ is skew-symmetric. 
Let $\varphi_{\frak{c}} : \SL_2 \rightarrow \Sp(U_i)$  be a map corresponding to a long root of $\Sp(U_i)$. 
Let  $\SL_{2,\frak{c}}$ be the image of $\varphi_{\frak{c}}$. 
Then, under the adjoint action of $\SL_{2,\frak{c}}$,  $\frak{g}(1)$ decomposes as 
$$\frak{g}(1)\cong \frak{g}(1)^{\SL_{2,\frak{c}}} \oplus m V_2,$$
where, if $i$ is odd, 
\[
m=i(\sum_{j>i, j \text{ even}} \dim U_j) \oplus (\sum_{j < i, j \text{ even}} j \dim U_j) 
\]
and, if $i$ is even, 
\[ 
m=i(\sum_{j>i, j \text{ odd}} \dim U_j) \oplus (\sum_{j < i, j \text{ odd}} j \dim U_j). 
\]

\end{lem}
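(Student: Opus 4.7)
The plan is to decompose $\mathfrak g(1)$ as an $\SL_{2,\mathfrak c}$-module using (\ref{eq4})/(\ref{eq5}) and then to count $V_2$-multiplicities. First I observe that the two parity cases in the lemma correspond to $W$ being symplectic (when $i$ is odd) and $W$ being orthogonal (when $i$ is even): the subgroup $\Sp(U_i)\subset G$ exists precisely when $(,)_i$ is skew-symmetric, and the sign rule following (\ref{eq3}) shows that this happens in exactly those parity combinations. In either case I will write
\[
\mathfrak g\cong\bigoplus_k A^2(W_k)\oplus\bigoplus_{k<l}W_k\otimes W_l,\qquad W_k=V_k\otimes U_k,
\]
with $A^2=\wedge^2$ in the orthogonal case and $A^2=S^2$ in the symplectic case.

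The second step identifies the $s$-weight-$1$ part. Every $\varphi(\mathfrak{sl}_2)$-weight of $V_k$ has parity $k-1$, so all weights occurring in $V_k\otimes V_k$ are even; hence $\mathfrak g(1)\cap A^2(W_k)=0$ for every $k$, and $\mathfrak g(1)$ sits entirely inside the cross-terms. For $k\neq l$, the weight-$1$ subspace of $W_k\otimes W_l$ is $(V_k\otimes V_l)_1\otimes U_k\otimes U_l$, which is non-zero precisely when $k+l$ is odd; a direct count of weights in $V_k\otimes V_l$ then yields $\dim(V_k\otimes V_l)_1=\min(k,l)$ in that case.

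The third step brings in $\SL_{2,\mathfrak c}$. Because $\SL_{2,\mathfrak c}\subset\Sp(U_i)$ is attached to a long root, the defining module $U_i$ decomposes as $V_2\oplus U_i^{\mathrm{triv}}$ with $\dim U_i^{\mathrm{triv}}=\dim U_i-2$, while $\SL_{2,\mathfrak c}$ acts trivially on every $V_k$ and on every $U_k$ with $k\neq i$. Consequently the only cross-terms producing copies of $V_2$ are the $W_i\otimes W_k$ with $k\neq i$ of parity opposite to $i$; each such term contributes $\min(i,k)\cdot\dim U_k$ copies of $V_2$, while all other cross-terms contribute to $\mathfrak g(1)^{\SL_{2,\mathfrak c}}$. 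Splitting the resulting sum $\sum_k\min(i,k)\dim U_k$ into $k<i$ (where $\min(i,k)=k$) and $k>i$ (where $\min(i,k)=i$) produces exactly the formulas stated for $m$ in both parity cases.

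The main obstacle is pure bookkeeping: matching the parity of $k$ to the parity of $i$ and keeping straight whether $A^2$ is $\wedge^2$ or $S^2$. There is no real analytic or structural difficulty; classical $\mathfrak{sl}_2$-representation theory suffices throughout.
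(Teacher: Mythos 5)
Your argument is correct and follows essentially the same route as the paper: decompose $U_i$ under $\SL_{2,\frak{c}}$ as $V_2$ plus a trivial complement, use \eqref{eq4}/\eqref{eq5} to see that the non-fixed part of $\frak{g}(1)$ is $\oplus_{j\neq i}\,(V_i\otimes V_j)(1)\otimes V_2\otimes U_j$, and compute $\dim (V_i\otimes V_j)(1)=\min\{i,j\}$ for $i,j$ of opposite parity. You simply spell out more of the bookkeeping (why the diagonal blocks $A^2(W_k)$ have no odd weights, and the split of $\sum_k\min(i,k)\dim U_k$ into the two ranges) that the paper leaves implicit.
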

\begin{proof} Under the action of $\SL_{2,\frak{c}}$, the space $U_i$ decomposes as $V_2\oplus V_2^c$ where $\SL_{2,\frak{c}}$ acts on $V_2^c$ trivially. 
 It follows from (\ref{eq4}) and  (\ref{eq5}) that 
\[ 
\oplus_{j\neq i}  ((V_i \otimes V_j) (1))\otimes (V_2 \otimes U_j)
\] 
is the complement of $\frak{g}(1)^{\SL_{2,\frak{c}}}$ in $\frak{g}(1)$.  
On the other hand,  $(V_i \otimes V_j) (1)\neq 0$ only  for $i$ and $j$ of different parity, and then 
\[ 
\dim ((V_i\otimes V_j )(1)) =\min\{i,j\}. 
\] 
The lemma follows. 
\end{proof}

If the decomposition \eqref{eq3} of $W$ contains a non-trivial summand  such that the form $(,)_i$ is symmetric, then 
$\mathrm{O}(U_i)$ acts naturally on $W$  preserving the form $\langle, \rangle$. This gives an inclusion of $\mathrm{O}(U_i)$ into $G$. The adjoint action of 
$\mathrm{O}(U_i)$ on $\mathfrak g$ commutes with $\varphi(\mathfrak{sl}_2)$. In particular,  $\mathrm{O}(U_i)$  acts on each $s$-weight space $\mathfrak g(j)$. 

\begin{lem}\label{L:decomposition-2}
Assume that the decomposition \eqref{eq3} of $W$ contains a non-trivial summand for $i$ such that the form $(,)_i$ is symmetric with dimension $\geq 4$, and has a two-dimensional isotropic sub-space. 
Then $\mathrm{O}(U_i)$ has a parabolic subgroup fixing this sub-space with Levi subgroup isomorphic to $\GL_2 \times \mathrm{O}(U_i')$.
Let $\varphi_\frak{c} : \SL_2 \rightarrow \mathrm{O}(U_i)$  be a map corresponding to the roots of $\GL_2$. 
Let  $\SL_{2,\frak{c}}$ be the image of $\varphi_{\frak{c}}$. 
Then the structure of the $\SL_{2,\frak{c}}$-module $\frak{g}(1)$ is given by 
$$\frak{g}(1)\cong \frak{g}(1)^{\SL_{2,\frak{c}}} \oplus 2m V_2,$$
where, if $i$ is odd, 
\[
m=i(\sum_{j>i, j \text{ even}} \dim U_j) \oplus (\sum_{j < i, j \text{ even}} j \dim U_j) 
\]
and, if $i$ is even, 
\[ 
m=i(\sum_{j>i, j \text{ odd}} \dim U_j) \oplus (\sum_{j < i, j \text{ odd}} j \dim U_j). 
\]

\end{lem}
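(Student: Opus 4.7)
The plan is to mirror the proof of Lemma~\ref{L:decomposition}; the only new ingredient is the different decomposition of $U_i$ under the new $\SL_{2,\frak{c}}$, and after that step the computation runs in parallel, producing $2m$ copies of $V_2$ in place of $m$.

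First I would determine the $\SL_{2,\frak{c}}$-module structure of $U_i$. Pick the given two-dimensional isotropic subspace $E$ of $U_i$ and, using non-degeneracy of $(,)_i$, a dually paired isotropic complement $E'$, so that $U_i = E \oplus E' \oplus U_i'$. The Levi $\GL(E) \times \mathrm{O}(U_i')$ acts on $E$ by the standard representation, on $E'$ by the inverse-transpose, and trivially on $U_i'$. Restricting to $\SL_{2,\frak{c}} = \SL(E)$, and using that the standard representation of $\SL_2$ is self-dual, both $E$ and $E'$ become copies of $V_2$, while $U_i'$ stays trivial. Hence
\[
U_i \;\cong\; V_2 \oplus V_2 \oplus U_i'
\]
as $\SL_{2,\frak{c}}$-modules (trivial on $U_i'$), doubling the non-trivial $\SL_{2,\frak{c}}$-content compared with the symplectic case of Lemma~\ref{L:decomposition}.

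I would then rerun the weight-space computation of Lemma~\ref{L:decomposition} verbatim. The identities \eqref{eq4} and \eqref{eq5} show that the diagonal summands $\wedge^2 W_j$ and $S^2 W_j$ contribute only even $s$-weights (as $V_j \otimes V_j$ carries only even weights), so $\frak{g}(1)$ lies entirely in the off-diagonal pieces $W_i \otimes W_j$ with $i \neq j$. Because $\SL_{2,\frak{c}}$ acts only on the $U_i$-factor, the complement of $\frak{g}(1)^{\SL_{2,\frak{c}}}$ in $\frak{g}(1)$ is
\[
\bigoplus_{j \neq i} (V_i \otimes V_j)(1) \otimes (V_2 \oplus V_2) \otimes U_j,
\]
with nonzero contributions only when $i$ and $j$ have opposite parities, in which case $\dim (V_i \otimes V_j)(1) = \min\{i,j\}$, exactly as in the proof of Lemma~\ref{L:decomposition}. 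Counting multiplicities of $V_2$ then gives $2\min\{i,j\}\dim U_j$ copies of $V_2$ for each admissible $j$, and splitting according to $j<i$ versus $j>i$ reproduces $2m$ with $m$ as in the statement.

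I do not anticipate a serious obstacle: the argument is in lock-step with Lemma~\ref{L:decomposition}, and the only delicate point is the bookkeeping in Step~1, namely checking that the action of $\GL(E)$ on the dual-isotropic summand is really the inverse-transpose, and that the self-duality of the standard $\SL_2$-representation then converts $E \oplus E'$ into two genuine copies of $V_2$, which is precisely what doubles the multiplicity $m$ of Lemma~\ref{L:decomposition}.
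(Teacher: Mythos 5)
Your proof is correct and follows essentially the same route as the paper's: the paper also decomposes $U_i$ as $(V_2\otimes U_2)\oplus(V_2\otimes U_2)^c$ with $\dim U_2=2$ (your $E\oplus E'\oplus U_i'$ with $E\oplus E'\cong 2V_2$), identifies the complement of the fixed space in $\frak{g}(1)$ as $\oplus_{j\neq i}(V_i\otimes V_j)(1)\otimes(2V_2)\otimes U_j$, and counts using $\dim((V_i\otimes V_j)(1))=\min\{i,j\}$ for $i,j$ of opposite parity. Your extra justification of the $\SL(E)$-action on the dual isotropic summand via self-duality of $V_2$ is a correct elaboration of a step the paper leaves implicit.
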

\begin{proof} Under the action of $\SL_{2,\frak{c}}$, the space $U_i$ decomposes as $(V_2 \otimes U_2) \oplus (V_2 \otimes U_2)^c$ where $\SL_{2,\frak{c}}$ acts on $(V_2 \otimes U_2)^c$ trivially, and $\dim(U_2)=2$. 
 It follows from (\ref{eq4}) and  (\ref{eq5}) that 
\[ 
\oplus_{j\neq i}  ((V_i \otimes V_j) (1))\otimes ((V_2 \otimes U_2) \otimes U_j)
\] 
is the complement of $\frak{g}(1)^{\SL_{2,\frak{c}}}$ in $\frak{g}(1)$.  
On the other hand,  $(V_i \otimes V_j) (1)\neq 0$ only  for $i$ and $j$ of different parity, and then 
\[ 
\dim ((V_i\otimes V_j )(1)) =\min\{i,j\}. 
\] 
The lemma follows. 
\end{proof}

\section{Raising nilpotent orbits from symplectic stabilizers} 

Let $G=\Sp(W)$ or $\wt{\Sp}(W)$, if $W$ is a symplectic space,  or $G=\mathrm{O}(W)$ if $W$ is an orthogonal space.
Assume that we are in the situation as in Lemma \ref{L:decomposition}, that is, the decomposition \eqref{eq3} of $W$ contains a non-trivial summand for $i$ such that the form $(,)_i$ is skew-symmetric. 
Let $\varphi_{\frak{c}} : \frak{sl}_2 \rightarrow \frak{sp}(U_i)$  be a map corresponding to a long root of $\frak{sp}(U_i)$.  Let 
 \[ 
\varphi'=\varphi\oplus \varphi_{\frak{c}}  : \frak{sl}_2 \rightarrow \frak{g}
\] 
and $u'=u+u_{\frak{c}}$, a nilpotent element corresponding to $\varphi'$.  Since the orbit of $u_{\frak{c}}$ in $\frak{sl}_{2,{\frak{c}}} = \varphi_{\frak{c}} (\frak{sl}_{2})$ contains 0 in its closure, it follows that the closure of the orbit of $u'$ contains the orbit of $u$.  

We shall now determine the partition $\ul{p}'$ corresponding to $u'$. 
From the decomposition (\ref{eq3}) of $W$ under the action of $\varphi(\frak{sl}_2)$ one can easily obtain a similar decomposition for 
$\varphi'(\frak{sl}_2)$. Indeed, under the action of $\varphi_{\frak{c}}(\frak{sl}_2)$, the symplectic space $U_i$ decomposes $U_i=V_2 \oplus V_2^c$ where $V_2$ is the irreducible 2-dimensional representation of $\frak{sl}_2$, and $\varphi_{\frak{c}}(\frak{sl}_2)$ acts on the complement $V_2^c$ trivially. 
 Now the Clebsch-Gordan formula $V_i\otimes V_2\cong V_{i-1} \oplus V_{i+1}$ implies that the partition $\ul{p'}$ is obtained from the partition $\ul{p}$, corresponding to $u$,  by replacing a pair $(i,i)$ by $(i+1,i-1)$. 
  
   A more refined information about the conjugacy class of $u'$ is given by the following proposition.

 \begin{prop} \label{P:forms}  Let $U_j$ be the spaces in the decomposition of $W$ under the action of $\varphi(\frak{sl}_2)$.  Then  
 the spaces $U'_j$ in the decomposition of $W$ under the action of $\varphi'(\frak{sl}_2)$ are 
  \[ 
  U'_i= V_2^c, \,  U'_{i+1}= U_{i+1}\oplus \langle b\rangle,  \, U'_{i-1}= U_{i-1}\oplus \langle b\rangle ,
  \] 
 for some $b\in k^{\times}$,  and $U'_j=U_j$ otherwise. Here $\langle b\rangle$ is the one-dimensional orthogonal space with the form $bx^2$. 
  \end{prop}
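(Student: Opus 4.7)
The plan is to describe $W$ as a $\varphi'(\frak{sl}_2)$-module, where $\varphi' = \varphi \oplus \varphi_{\frak{c}}$ is the diagonal map into $\frak{g}$, and then read off both the new multiplicity spaces $U'_j$ and the forms they carry. Because $\varphi_{\frak{c}}$ is embedded through $\frak{sp}(U_i) \subset \frak{g}$, it acts trivially on every $U_j$ for $j \neq i$ and on every $V_j$. Hence for $j \neq i$ the summand $V_j \otimes U_j \subset W$ is untouched as a $\varphi'(\frak{sl}_2)$-module and keeps its form $\langle,\rangle_j \otimes (,)_j$; this gives $U'_j = U_j$ for all $j \notin \{i-1, i, i+1\}$ and reduces the whole question to $V_i \otimes U_i$.

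Under $\varphi_{\frak{c}}(\frak{sl}_2)$ the symplectic space $U_i$ splits as an orthogonal direct sum $V_2 \oplus V_2^c$: because $\varphi_{\frak{c}}$ corresponds to a long root of $\frak{sp}(U_i)$, the factor $V_2$ is a hyperbolic plane on which $\varphi_{\frak{c}}(\frak{sl}_2)$ acts as its standard $2$-dimensional representation, while $V_2^c$ is its symplectic orthogonal complement, fixed pointwise by $\varphi_{\frak{c}}$. Tensoring with $V_i$ produces the orthogonal decomposition $V_i \otimes U_i = (V_i \otimes V_2) \oplus (V_i \otimes V_2^c)$ of $\varphi'(\frak{sl}_2)$-modules. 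The second summand is $\dim V_2^c$ copies of $V_i$ with multiplicity space $V_2^c$ carrying the restricted form $(,)_i|_{V_2^c}$, and no other summand of $W$ contributes a copy of $V_i$ under $\varphi'$; so $U'_i = V_2^c$.

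The first summand decomposes under $\varphi'(\frak{sl}_2)$ as $V_i \otimes V_2 \cong V_{i+1} \oplus V_{i-1}$ by Clebsch--Gordan, each factor appearing with multiplicity one, and orthogonal to the unchanged summands $V_{i \pm 1} \otimes U_{i \pm 1}$ of $W$. Therefore $U'_{i \pm 1} = U_{i \pm 1} \oplus L_{\pm}$, where $L_{\pm} = \langle b_\pm\rangle$ is one-dimensional with a scalar $b_\pm \in k^\times$ determined by the form inherited on the new $V_{i \pm 1}$-summand. The proposition will follow once we show $b_+ \equiv b_- \pmod{(k^\times)^2}$, after which a common representative $b$ can be taken.

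To pin down these scalars, pick a symplectic basis $f_0, f_1$ of $V_2 \subset U_i$ with $(f_0, f_1)_i = b$ and a weight basis $v_i, F v_i, \ldots, F^{i-1} v_i$ of $V_i$ normalized so that $\langle v_i, F^{i-1} v_i\rangle_i = 1$; $\frak{sl}_2$-invariance then forces $\langle F^k v_i, F^{i-1-k} v_i\rangle_i = (-1)^k$. Explicit highest weight vectors for $V_{i+1}$ and $V_{i-1}$ inside $V_i \otimes V_2$ are $v_i \otimes f_0$ and $v_i \otimes f_1 - (i-1)^{-1} F v_i \otimes f_0$ respectively. A short direct pairing of each with its image under $F^{i}$, respectively $F^{i-2}$, yields $b_+ = ib$ and $b_- = ib/(i-1)^2$; these differ by the perfect square $(i-1)^2$ and so represent the same class in $k^\times/(k^\times)^2$. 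The main (and only nontrivial) obstacle is precisely this Clebsch--Gordan computation, where the coefficient $(i-1)^{-1}$ in the second highest weight vector and the signs $(-1)^k$ must be tracked carefully; once done, the proposition holds with $b = ib$ as a square class, and both $U'_{i+1}$ and $U'_{i-1}$ acquire the same one-dimensional orthogonal summand $\langle b\rangle$.
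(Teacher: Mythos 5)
Your proposal is correct and follows essentially the same route as the paper: reduce to the summand $V_i\otimes V_2$, apply Clebsch--Gordan, and compute the discriminant of the induced form on each of $V_{i+1}$ and $V_{i-1}$ by pairing an explicit highest weight vector with its image under the lowering operator (the paper's Lemma \ref{L:forms}). The only difference is cosmetic: the paper scales its highest weight vector for $V_{i-1}$ by $(i-1)$ so that both invariants come out exactly equal to $ai$, whereas your normalization gives $ib$ and $ib/(i-1)^2$, which agree in $k^\times/(k^\times)^2$ as you note; your intermediate values check out.
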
 
  \begin{proof} 
   Let $v_2\in V_2 \subseteq U_i$ be a highest weight vector. Let $a=(v_2, u_{\frak{c}} v_2)_i$, so that the restriction of $(,)_i$ to $V_2$ is the form previously denoted as 
 $\langle, \rangle_2^a$. Proposition follows from the following lemma, with $b=ai$. 
 \end{proof} 
 
 \begin{lem} \label{L:forms} 
 With respect to the Clebsh-Gordan decomposition $V_i\otimes V_2\cong V_{i-1} \oplus V_{i+1}$, the form $\langle, \rangle_i \otimes \langle, \rangle^a_2$ 
 decomposes as 
 \[ 
 \langle, \rangle_i \otimes \langle, \rangle^a_2= \langle, \rangle_{i+1} ^{ai} \oplus  \langle, \rangle_{i-1} ^{ai}  
 \] 
 \end{lem}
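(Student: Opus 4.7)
The plan is to proceed by an explicit Clebsch--Gordan calculation: identify highest-weight vectors of the $V_{i+1}$ and $V_{i-1}$ summands in $V_i\otimes V_2$, iterate $u$ the correct number of times to reach a lowest-weight vector, and read off the normalizing constant of the restricted form by pairing the two. Orthogonality of the two summands with respect to $\langle\,,\,\rangle_i\otimes\langle\,,\,\rangle^a_2$ is automatic since $V_{i+1}$ and $V_{i-1}$ are non-isomorphic irreducible $\mathfrak{sl}_2$-modules and the tensor product of invariant forms is invariant, so Schur's lemma does the job.

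Concretely, fix highest-weight vectors $v_i\in V_i$ and $v_2\in V_2$ normalized so that $\langle v_i,u^{i-1}v_i\rangle_i=1$ and $\langle v_2,uv_2\rangle^a_2=a$. Then $v_i\otimes v_2$ is the unique (up to scalar) highest-weight vector of the $V_{i+1}$ summand, while using $[e,u]=h$, $ev_i=0$, $ev_2=0$ and solving for the kernel of $e$ on the weight $(i-2)$-space shows that
\[
w := uv_i\otimes v_2-(i-1)\,v_i\otimes uv_2
\]
is a highest-weight vector of the $V_{i-1}$ summand.

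Since $u$ acts as a derivation on the tensor product and $u^2v_2=0$, $u^iv_i=0$, the binomial expansion collapses to
\[
u^i(v_i\otimes v_2)=i\,u^{i-1}v_i\otimes uv_2,\qquad u^{i-2}w=u^{i-1}v_i\otimes v_2-u^{i-2}v_i\otimes uv_2.
\]
For the $V_{i+1}$ summand this immediately gives $\langle v_i\otimes v_2,\,u^i(v_i\otimes v_2)\rangle=i\cdot 1\cdot a=ia$, so by the normalization convention the restricted form is $\langle\,,\,\rangle^{ai}_{i+1}$. For $V_{i-1}$, expand the four cross-terms of $\langle w,u^{i-2}w\rangle$. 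Using $\mathfrak{sl}_2$-invariance of $\langle\,,\,\rangle_i$ we get $\langle uv_i,u^{i-2}v_i\rangle_i=-\langle v_i,u^{i-1}v_i\rangle_i=-1$, while $\langle v_i,u^{i-2}v_i\rangle_i=0$ by weight considerations. Skew-symmetry of $\langle\,,\,\rangle^a_2$ gives $\langle v_2,v_2\rangle^a_2=\langle uv_2,uv_2\rangle^a_2=0$ and $\langle uv_2,v_2\rangle^a_2=-a$. Two of the four terms vanish and the remaining two contribute $a$ and $(i-1)a$, giving $\langle w,u^{i-2}w\rangle=ia$. Thus the restriction to $V_{i-1}$ is $\langle\,,\,\rangle^{ai}_{i-1}$, completing the identification.

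The only delicate point is sign bookkeeping: $\langle\,,\,\rangle^a_2$ is always skew-symmetric, while $\langle\,,\,\rangle_i$ is symmetric or skew-symmetric according to the parity of $i$, and the resulting form on $V_{i\pm 1}$ must match the parity convention used throughout (symmetric if $i\pm 1$ is odd, skew if even). This consistency follows automatically from the computation above, since the expansion of $\langle w,u^{i-2}w\rangle$ only used invariance of $\langle\,,\,\rangle_i$ and the normalization values, never its symmetry type; the symmetry of the induced form is forced by invariance and uniqueness (Schur again). Since the scalar $ai$ has the same square class as $a$ up to the factor $i$, the final answer is as stated.
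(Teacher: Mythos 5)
Your proof is correct and follows essentially the same route as the paper: exhibit the highest-weight vectors $v_i\otimes v_2$ and $uv_i\otimes v_2-(i-1)v_i\otimes uv_2$ (the paper uses the negative of the latter, which is immaterial since the invariant is quadratic in the vector), apply the Leibniz rule to collapse $u^i$ and $u^{i-2}$, and pair to get $ia$ in both cases. The extra remarks on orthogonality of the two summands via Schur's lemma and on the symmetry type are correct and only make explicit what the paper leaves implicit.
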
 
 \begin{proof} Let $v_i\in V_i$ be a highest weight vector such that $\langle v_i, u^{i-1} v_i\rangle =1$. Then 
 \[ 
 v_{i+1}= v_i \otimes v_2
 \]  is a highest weight vector of $V_{i+1}$. Using the Newton-Leibniz rule,  
 \[ 
 (u+u_{\frak{c}})^i v_{i+1} = i \cdot (u^{i-1} v_i \otimes u_{\frak{c}} v_2). 
 \] 
 It follows that the pairing  $\langle, \rangle_i \otimes \langle, \rangle^a_2$ evaluated at $v_{i+1}$ and $(u+u_{\frak{c}})^i v_{i+1}$ gives $ai$.  Similarly, 
 \[ 
 v_{i-1}= (i-1) \cdot (v_{i}\otimes u_{\frak{c}} v_2)  - u v_i \otimes v_2
 \]  is a highest weight vector of $V_{i-1}$. Then 
 \[ 
 (u+u_{\frak{c}})^{i-2} v_{i-1} = u^{i-2} v_i \otimes u_{\frak{c}} v_2 - u^{i-1} v_i \otimes v_2. 
 \] 
 Next, using $\langle -u v_i , u^{i-2} v_i\rangle_i = \langle v_i, u^{i-1} v_i\rangle_i =1$, the pairing  $\langle, \rangle_i \otimes \langle, \rangle^a_2$ evaluated at 
 $v_{i-1}$ and $(u+u_{\frak{c}})^{i-2} v_{i-1}$ also gives $ai$. 
  \end{proof}

We shall now describe $\frak{g}(j,l)$. 
Using the decompositions (\ref{eq4}),  (\ref{eq5}) and $U_i=V_2\oplus V_2^c$ we see that $\frak{g}(j,l)\neq 0$ only for $|l|\leq 2$. In particular, 
$\frak{g}(1,1)$ and $\frak{g}(0,2)$ are the only two such spaces contained in $\frak{n}_{u'}$ but not in  $\frak{n}_{u}$. 
Let
\[ 
E_i= \begin{cases}  \wedge^2 V_i \text{ if $i$ is even} \\ 
S^2 V_i \text{ if $i$ is odd.}
\end{cases} 
\] 
 Then
 \begin{equation}\label{eq10} 
 \frak{g}(j,\pm 2) \cong E_i(j) \otimes S^2(V_2)(\pm 2). 
 \end{equation} 
 
 From \eqref{eq10}, one can easily see that $\dim \frak{g}(0,2) = \dim \frak{g}(2,2) +1$, that is,  the condition (3) in Section \ref{S:raising}  is satisfied. Therefore, by Lemma \ref{L:decomposition}, all conditions (1-3) in Section \ref{S:raising} are satisfied. 
 The following proposition follows  from Proposition \ref{P5:local}.

 \begin{prop}\label{proplocal:sym}
 Assume that we are in the situation as in Lemma \ref{L:decomposition} and $k$ is a non-archimedean local field. Let $\pi$ be a smooth representation of $G$. If $G=\wt{\Sp}(W)$ assume that $\pi$ is a
genuine representation. Let $u$ be a nilpotent element in $\frak{g}$ such that $\pi_{N_u,\psi_u}\neq 0$.
Assume that $FJ_{\psi_u}(\pi_{N_u, \psi_u})=\Hom_{H_{u, \frak{c}}}(\rho_{\psi_u}, \pi_{N_u, \psi_u})$ is a Whittaker-generic representation of $M$.
Then $\pi_{N_{u'},\psi_{u'}}\neq 0$ for a nilpotent element $u'$ whose partition is obtained from the partition corresponding to $u$ via replacing a pair $(i,i)$ by $(i+1, i-1)$.
\end{prop}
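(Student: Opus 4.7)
The plan is to reduce the proposition directly to Proposition \ref{P5:local} by choosing $\varphi_{\mathfrak{c}}:\mathfrak{sl}_2 \to \mathfrak{sp}(U_i)$ to be the long-root embedding from Lemma \ref{L:decomposition} and $M$ the (metaplectic) central extension of $\varphi_{\mathfrak{c}}(\SL_2)$ on which the Weil representation of the Heisenberg quotient $H/N'_u$ is defined. What remains is to verify the three hypotheses (1)--(3) of Section \ref{S:raising} in this geometric setting, and then to identify the partition of $u' = u + u_{\mathfrak{c}}$.

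For the verification, I would use the $\varphi_{\mathfrak{c}}$-splitting $U_i = V_2 \oplus V_2^c$, with trivial action on $V_2^c$. Feeding this into the decompositions \eqref{eq4} and \eqref{eq5}, the $\varphi_{\mathfrak{c}}(\mathfrak{sl}_2)$-type factors arising from $U_i$ are summands of $V_2 \otimes V_2 \cong V_3 \oplus V_1$, whose $s_{\mathfrak{c}}$-weights lie in $\{-2,0,2\}$; this gives condition (1). Condition (2) is exactly Lemma \ref{L:decomposition}. For condition (3), I would invoke \eqref{eq10}: since $S^2(V_2)(\pm 2)$ is one-dimensional, the required identity $\dim \mathfrak{g}(0,2) = \dim \mathfrak{g}(2,2) + 1$ reduces to $\dim E_i(0) = \dim E_i(2) + 1$. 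This is a short check on the $\mathfrak{sl}_2$-module structure of $\wedge^2 V_i$ (when $i$ is even) or $S^2 V_i$ (when $i$ is odd): each is a multiplicity-free sum of odd-dimensional irreducibles containing exactly one copy of the trivial module $V_1$, and that copy is the sole contributor to the excess in the zero weight space over the weight-two space.

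With conditions (1)--(3) established and the standing hypotheses that $\pi_{N_u,\psi_u}\neq 0$ and $FJ_{\psi_u}(\pi_{N_u,\psi_u})$ is Whittaker-generic, Proposition \ref{P5:local} immediately yields $\pi_{N_{u'},\psi_{u'}}\neq 0$ for $u' = u + u_{\mathfrak{c}}$ with some non-zero $u_{\mathfrak{c}} \in \varphi_{\mathfrak{c}}(\mathfrak{sl}_2)$. To identify the partition, I would apply the Clebsch-Gordan identity $V_i \otimes V_2 \cong V_{i-1} \oplus V_{i+1}$ to the unique $V_2$-summand inside $U_i$ under $\varphi_{\mathfrak{c}}$; this replaces the pair $(i,i)$ in $\ul{p}$ by $(i+1,i-1)$ in $\ul{p}'$, exactly as claimed, with Proposition \ref{P:forms} providing the accompanying refinement on the defining forms. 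The only genuinely computational ingredient is the dimension count underlying condition (3); the metaplectic case $G = \wt{\Sp}(W)$ with $\pi$ genuine requires no separate treatment, since Weil's construction places the Heisenberg representation on an appropriate central extension of $\SL_2$ on which the Whittaker-generic hypothesis is directly the assumption of the proposition.
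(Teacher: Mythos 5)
Your proposal is correct and follows essentially the same route as the paper: the paper establishes conditions (1)--(3) in the discussion preceding the proposition using $U_i=V_2\oplus V_2^c$, the decompositions \eqref{eq4}--\eqref{eq5}, the isomorphism \eqref{eq10}, and Lemma \ref{L:decomposition}, then invokes Proposition \ref{P5:local} and Clebsch--Gordan for the partition. Your explicit reduction of condition (3) to the single copy of the trivial $\mathfrak{sl}_2$-module in $E_i$ is exactly the dimension count the paper leaves as "one can easily see."
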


%\begin{proof}
%Recall from Section 8 that $\frak{u}= \frak{n}_{\frak{c}} \oplus X \oplus \frak{n}_u$ and  $U=\exp(\frak{u})$. By Proposition \ref{P2:local}, we have that 
%$\pi_{U, \tilde{\psi}}\neq 0$ for some character $\tilde{\psi}$ of $U$, equal to $\psi_u$ on $N_u$, trivial on $\exp(X)$ and equals to $\psi_{u_{\frak{c}}}$ on $N_{\frak{c}}$, for some choice of  
%nilpotent $u_{\mathfrak c}\neq 0$ in $\frak{sl}_{2,{\mathfrak c}}$. 
%Then, the proposition follows from Proposition \ref{P5:local} and the discussion at the beginning of this section.
%\end{proof}

There is also a global version of Proposition \ref{proplocal:sym}, which follows from Proposition \ref{P5:global}.  

\begin{prop}\label{propglobal:sym}
 Assume that we are in the situation as in Lemma \ref{L:decomposition} and $k$ is a global field. Let $\Pi$ be a space of smooth functions on 
 $G(k)\backslash G(\mathbb A)$ stable under the action of $G(\mathbb A)$ by right translations.  If $G=\wt{\Sp}(W)$ assume that $\Pi$ consists of 
 genuine functions.  Let $u$ be a nilpotent element in $\frak{g}$ such that $\Pi_{N_u,\psi_u}\neq 0$.
Assume that $FJ_{\psi_u}(\Pi_{N_u, \psi_u})$ (see Section 5 for definition) is a Whittaker-generic representation of $M(\BA)$.
Then 
$\Pi_{N_{u'},\psi_{u'}}\neq 0$ 
for a nilpotent element $u'$ whose partition is obtained from the partition corresponding to $u$ via replacing a pair $(i,i)$ by $(i+1, i-1)$.
\end{prop}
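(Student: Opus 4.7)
The plan is to deduce the proposition directly from Proposition \ref{P5:global} by verifying that the data produced by Lemma \ref{L:decomposition} satisfies the three abstract hypotheses (1)--(3) of Section \ref{S:raising}, and then to identify the resulting partition via Clebsch-Gordan. The Fourier-Jacobi model that arises is the one already assumed to be Whittaker-generic, so no new genericity input is needed.

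First I would take $\varphi_{\frak{c}}\colon\frak{sl}_2\to\frak{sp}(U_i)\subset\frak{g}$ to be the long-root embedding from Lemma \ref{L:decomposition}. Its image centralizes $\varphi(\frak{sl}_2)$ since $\Sp(U_i)$ acts on $W=\bigoplus_j V_j\otimes U_j$ only through the factor $U_i$. This provides the centralizer data $(\varphi_{\frak{c}},s_{\frak{c}},u_{\frak{c}})$ required to run the raising machinery.

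Next I would verify the three conditions. Under $\varphi_{\frak{c}}(\frak{sl}_2)$ one has $U_i=V_2\oplus V_2^c$ with trivial action on $V_2^c$; combined with the decompositions \eqref{eq4}--\eqref{eq5} of $\frak{g}$ inside $\wedge^2 W$ or $S^2 W$, this shows that every $\varphi_{\frak{c}}(\frak{sl}_2)$-isotype in $\frak{g}$ is built from at most two tensor factors of $V_2$, so all $s_{\frak{c}}$-weights lie in $\{-2,\ldots,2\}$, giving condition~(1). Condition~(2) is exactly the content of Lemma \ref{L:decomposition}. Condition~(3) follows from \eqref{eq10}: writing $E_i=\wedge^2 V_i$ or $S^2 V_i$ according to the parity of $i$, a direct $s$-weight count yields $\dim E_i(0)=\dim E_i(2)+1$, and hence $\dim \frak{g}(0,2)=\dim \frak{g}(2,2)+1$.

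With the three conditions verified and the Whittaker-generic hypothesis on $FJ_{\psi_u}(\Pi_{N_u,\psi_u})$ given, Proposition \ref{P5:global} produces $\Pi_{N_{u'},\psi_{u'}}\neq 0$ for $u'=u+u_{\frak{c}}$ and some non-zero $u_{\frak{c}}\in\varphi_{\frak{c}}(\frak{sl}_2)$. To read off the partition, decompose $W$ under $\varphi'=\varphi\oplus\varphi_{\frak{c}}$: only the $W_i=V_i\otimes U_i$-summand changes, and via $U_i=V_2\oplus V_2^c$ together with $V_i\otimes V_2\cong V_{i+1}\oplus V_{i-1}$, one pair $(i,i)$ in $\ul{p}$ is replaced by $(i+1,i-1)$ in the partition of $u'$. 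The only subtlety concerns the metaplectic case $G=\wt{\Sp}(W)$: conditions (1)--(3) are purely Lie-algebraic and hence insensitive to the cover, and the genuineness of $\Pi$ only enters through the Fourier-Jacobi hypothesis, which is already assumed; thus the argument applies verbatim.
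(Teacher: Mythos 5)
Your proposal is correct and follows the paper's route exactly: verify conditions (1)--(3) of Section \ref{S:raising} via Lemma \ref{L:decomposition} together with the bound $|l|\leq 2$ and the weight computation from \eqref{eq10}, invoke Proposition \ref{P5:global}, and read off the new partition from $U_i=V_2\oplus V_2^c$ and Clebsch--Gordan. The count $\dim E_i(0)=\dim E_i(2)+1$ is the one detail the paper leaves as ``one can easily see,'' and you supply it correctly.
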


\section{Raising nilpotent orbits from orthogonal stabilizers} 
Let $G=\Sp(W)$ or $\wt{\Sp}(W)$, if $W$ is a symplectic space,  or $G=\mathrm{O}(W)$ if $W$ is an orthogonal space.
Assume that we are in the situation as in Lemma \ref{L:decomposition-2}, that is, the decomposition \eqref{eq3} of $W$ contains a non-trivial summand for $i$ such that the form $(,)_i$ is symmetric with dimension $\geq 4$, and has a two-dimensional isotropic sub-space. 
Then $O(U_i)$ has a parabolic subgroup fixing this sub-space with Levi subgroup isomorphic to $\GL_2 \times O(U_i')$.
Let $\varphi_\frak{c} : \SL_2 \rightarrow O(U_i)$  be a map corresponding to the roots of $\GL_2$. 
Let 
 \[ 
\varphi'=\varphi\oplus \varphi_{\frak{c}}  : \frak{sl}_2 \rightarrow \frak{g}
\] 
and $u'=u+u_{\frak{c}}$, a nilpotent element corresponding to $\varphi'$.  Since the orbit of $u_{\frak{c}}$ in $\frak{sl}_{2,{\frak{c}}} = \varphi_{\frak{c}} (\frak{sl}_{2})$ contains 0 in its closure, it follows that the closure of the orbit of $u'$ contains the orbit of $u$.

By the Clebsch-Gordan formula 
$$V_i\otimes (V_2 \otimes U_2)\cong V_{i-1}\otimes U_2 \oplus V_{i+1}\otimes U_2,$$
which implies that the partition $\ul{p'}$ corresponding to $u'$ is obtained from the partition $\ul{p}$, corresponding to $u$,  by replacing a quadruple $(i,i,i,i)$ by $(i+1,i+1,i-1,i-1)$.

Next, let 
\[ 
E_i= \begin{cases}  \wedge^2 V_i \text{ if $i$ is even} \\ 
S^2 V_i \text{ if $i$ is odd,}
\end{cases} 
\] 
\[ 
F_i= \begin{cases}  S^2 V_i \text{ if $i$ is even} \\ 
\wedge^2 V_i \text{ if $i$ is odd.}
\end{cases} 
\] 
 Then
 \begin{equation}\label{eq10-2} 
 \frak{g}(j,\pm 2) \cong E_i(j) \otimes \wedge^2(V_2 \otimes U_2)(\pm 2) \oplus F_i(j) \otimes S^2(V_2 \otimes U_2)(\pm 2). 
 \end{equation} 
 
From \eqref{eq10-2}, one can easily see that
 $\dim \frak{g}(0,2) = \dim \frak{g}(2,2) +1$, that is, the condition (3) in Section \ref{S:raising}  is satisfied. Therefore, by Lemma \ref{L:decomposition-2}, all conditions (1-3) in Section \ref{S:raising}  are satisfied.
 The following proposition follows directly from Proposition \ref{P5:local} and the discussion at the beginning of this section.

\begin{prop}\label{proplocal:ortho}
 Assume that we are in the situation as in Lemma \ref{L:decomposition-2} and $k$ is a non-archimedean local field. 
Let $\pi$ be a smooth representation of $G$.
If $G=\wt{\Sp}(W)$ assume that $\pi$ is a
genuine representation.
Let $u$ be a nilpotent element in $\frak{g}$ such that $\pi_{N_u,\psi_u}\neq 0$.
Assume that $FJ_{\psi_u}(\pi_{N_u, \psi_u})=\Hom_{H_{u, \frak{c}}}(\rho_{\psi_u}, \pi_{N_u, \psi_u})$ is a Whittaker-generic representation of $M$.
Then $\pi_{N_{u'}, \psi_{u'}}\neq 0$
for a nilpotent element $u'$ whose partition is obtained from the partition corresponding to $u$ via replacing a quadruple $(i,i,i,i)$ by $(i+1,i+1,i-1,i-1)$.
\end{prop}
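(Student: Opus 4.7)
The plan is to verify that conditions (1)--(3) at the start of Section \ref{S:raising} all hold in the present setting and then invoke Proposition \ref{P5:local} directly. With $\varphi' = \varphi \oplus \varphi_{\mathfrak c}$ and $u' = u + u_{\mathfrak c}$ as in the discussion opening this section, the relevant Jacobi group $J$ is the one built from these data in Section \ref{S:raising}.

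For condition (1), recall from the proof of Lemma \ref{L:decomposition-2} that $U_i$ decomposes under $\SL_{2,\mathfrak c}$ as $(V_2 \otimes U_2) \oplus (V_2 \otimes U_2)^c$, with $\dim U_2 = 2$ and $\SL_{2,\mathfrak c}$ acting trivially on the complement. Hence the $s_{\mathfrak c}$-weights on $U_i$, and therefore on $W$, lie in $\{-1,0,1\}$, so the weights on $\mathfrak g \hookrightarrow W^{\otimes 2}$ are bounded by $2$. Condition (2) is the content of Lemma \ref{L:decomposition-2}, since $\mathfrak g(1) \cong \mathfrak g(1)^{\SL_{2,\mathfrak c}} \oplus 2m V_2$ has exactly the required form (with multiplicity $2m$ in place of $m$).

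For condition (3), I would compute $\dim \mathfrak g(0,2) - \dim \mathfrak g(2,2)$ directly from (\ref{eq10-2}). Using
\[
\wedge^2(V_2 \otimes U_2) \cong \wedge^2 V_2 \otimes S^2 U_2 \;\oplus\; S^2 V_2 \otimes \wedge^2 U_2,
\]
\[
S^2(V_2 \otimes U_2) \cong S^2 V_2 \otimes S^2 U_2 \;\oplus\; \wedge^2 V_2 \otimes \wedge^2 U_2,
\]
together with the fact that $\wedge^2 V_2$ is the trivial $\SL_{2,\mathfrak c}$-module while the $\pm 2$-weight space of $S^2 V_2 \cong V_3$ is one-dimensional, one gets $\dim \wedge^2(V_2 \otimes U_2)(\pm 2) = 1$ and $\dim S^2(V_2 \otimes U_2)(\pm 2) = 3$. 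Hence $\dim \mathfrak g(j,\pm 2) = \dim E_i(j) + 3\dim F_i(j)$, and the required difference equals the number of trivial $\varphi(\mathfrak{sl}_2)$-summands in $E_i$ plus three times that number for $F_i$. Since $V_i \otimes V_i = V_{2i-1} \oplus V_{2i-3} \oplus \cdots \oplus V_1$ contains a unique trivial summand (the invariant bilinear form on $V_i$), which is symmetric for $i$ odd and alternating for $i$ even, the trivial summand always lies in $E_i$ and never in $F_i$. The difference is therefore $1+0=1$, giving condition (3).

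With conditions (1)--(3) in hand, Proposition \ref{P5:local} applies and produces $\pi_{N_{u'},\psi_{u'}} \neq 0$. To identify the partition corresponding to $u'$, I would decompose $W$ under $\varphi'(\mathfrak{sl}_2)$ via Clebsch--Gordan:
\[
V_i \otimes (V_2 \otimes U_2) \cong (V_i \otimes V_2) \otimes U_2 \cong (V_{i+1} \oplus V_{i-1}) \otimes U_2.
\]
Thus, among the original $\dim U_i$ copies of $V_i$, the four copies accounting for $V_2 \otimes U_2 \subset U_i$ are replaced by two copies each of $V_{i+1}$ and $V_{i-1}$; at the partition level this is precisely the substitution $(i,i,i,i) \rightsquigarrow (i+1,i+1,i-1,i-1)$. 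The only substantive step is the dimension count in (3); the remainder reduces to citing Lemma \ref{L:decomposition-2}, Proposition \ref{P5:local}, and Clebsch--Gordan.
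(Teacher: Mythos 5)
Your proposal is correct and follows the same route as the paper: verify conditions (1)--(3) of Section \ref{S:raising} (with (1) and (2) coming from Lemma \ref{L:decomposition-2}, and (3) from the decomposition \eqref{eq10-2}), invoke Proposition \ref{P5:local}, and identify the new partition by Clebsch--Gordan. The only difference is that you spell out the weight count behind ``one can easily see that $\dim\frak{g}(0,2)=\dim\frak{g}(2,2)+1$'' --- namely that the unique trivial $\varphi(\frak{sl}_2)$-summand of $V_i\otimes V_i$ lies in $E_i$ and not in $F_i$ --- which the paper leaves implicit; your computation of this detail is accurate.
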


There is also a global version of Proposition \ref{proplocal:ortho}, which follows directly from Proposition \ref{P5:global} and the discussion at the beginning of this section.

\begin{prop}\label{propglobal:ortho}
 Assume that we are in the situation as in Lemma \ref{L:decomposition-2} and $k$ is a global field. Let $\Pi$ be a space of smooth functions on 
 $G(k)\backslash G(\mathbb A)$ stable under the action of $G(\mathbb A)$ by right translations.  If $G=\wt{\Sp}(W)$ assume that $\Pi$ consists of 
 genuine functions.  
Let $u$ be a nilpotent element in $\frak{g}$ such that $\Pi_{N_u,\psi_u}\neq 0$.
Assume that $FJ_{\psi_u}(\Pi_{N_u, \psi_u})$ (see Section 5 for definition) is a Whittaker-generic representation of $M(\BA)$.
Then
$\Pi_{N_{u'},\psi_{u'}}\neq 0$  for a nilpotent element $u'$ whose partition is obtained from the partition corresponding to $u$ via replacing a quadruple $(i,i,i,i)$ by $(i+1,i+1,i-1,i-1)$.
\end{prop}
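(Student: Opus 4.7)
My plan is to reduce the statement directly to Proposition \ref{P5:global}, which is already stated to give the ``raising'' conclusion $\Pi_{N_{u'},\psi_{u'}}\ne 0$ once hypotheses (1)--(3) from Section \ref{S:raising} are verified and a Whittaker-generic Fourier-Jacobi hypothesis is in place. Thus the main work is bookkeeping: checking the three structural conditions in this orthogonal-stabilizer setup, and then reading off the resulting partition.

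First I will verify the three conditions. Condition (2), namely the decomposition $\mathfrak{g}(1)\cong \mathfrak{g}(1)^{\SL_{2,\mathfrak{c}}}\oplus 2m V_2$, is precisely the content of Lemma \ref{L:decomposition-2}, so this is immediate. Condition (1), that all $s_{\mathfrak{c}}$-weights are bounded by $2$, follows from the fact that under $\varphi_{\mathfrak{c}}$ the space $U_i$ decomposes as $(V_2\otimes U_2)\oplus (V_2\otimes U_2)^c$ with $\dim U_2=2$, so the $s_{\mathfrak{c}}$-weights on $U_i$ are $\pm 1$ or $0$, and hence the $s_{\mathfrak{c}}$-weights on $\mathfrak{g}\subseteq W\otimes W$ are bounded by $2$. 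Condition (3), that $\dim \mathfrak{g}(0,2)=\dim \mathfrak{g}(2,2)+1$, follows from the decomposition \eqref{eq10-2}: the $s_{\mathfrak{c}}$-weight $2$ piece $\wedge^2(V_2\otimes U_2)(2)\oplus S^2(V_2\otimes U_2)(2)$ is a fixed module, and a short $\mathfrak{sl}_2$-representation-theoretic count using that $E_i(j)$ and $F_i(j)$ obey $\dim E_i(0)=\dim E_i(2)+1$ or $\dim F_i(0)=\dim F_i(2)+1$ depending on the parity of $i$ produces the required dimension difference of one.

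Once (1)--(3) are in place, Proposition \ref{P5:global} applies verbatim to the given $\Pi$: since $\Pi_{N_u,\psi_u}\ne 0$ and since $FJ_{\psi_u}(\Pi_{N_u,\psi_u})$ is Whittaker-generic as a representation of $M(\mathbb{A})$, we conclude that $\Pi_{N_{u'},\psi_{u'}}\ne 0$ for $u'=u+u_{\mathfrak{c}}$ with some non-zero nilpotent $u_{\mathfrak{c}}$ in $\varphi_{\mathfrak{c}}(\mathfrak{sl}_2)$.

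It remains to identify the partition of $u'$. Under $\varphi'=\varphi\oplus \varphi_{\mathfrak{c}}$, the only summand of $W$ whose $\mathfrak{sl}_2$-type changes is $U_i\otimes V_i$: by construction $U_i$ contains the two-dimensional $\SL_{2,\mathfrak{c}}$-submodule $V_2\otimes U_2$ with $\dim U_2=2$, while its complement $(V_2\otimes U_2)^c$ is fixed. The Clebsch-Gordan rule $V_i\otimes V_2\cong V_{i+1}\oplus V_{i-1}$ gives
\[
V_i\otimes (V_2\otimes U_2)\cong (V_{i+1}\otimes U_2)\oplus (V_{i-1}\otimes U_2),
\]
so the four Jordan blocks of size $i$ contributed by $V_i\otimes (V_2\otimes U_2)$ are replaced by two blocks of size $i+1$ and two blocks of size $i-1$, while all other blocks of $\underline{p}$ are unchanged. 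This is exactly the $(i,i,i,i)\mapsto (i+1,i+1,i-1,i-1)$ modification, and if $G=\wt{\Sp}(W)$ the genuineness hypothesis on $\Pi$ is preserved by the construction since $\varphi_{\mathfrak{c}}$ factors through a long-root type $\SL_2$ inside the linear orthogonal group $O(U_i)$ so the central extension is unchanged. The mildly delicate step I expect to need most care is the dimension count in Condition (3), which must be done separately for $i$ even and $i$ odd using the explicit formulas for $E_i$ and $F_i$, but beyond this the proof is a direct citation of Proposition \ref{P5:global}.
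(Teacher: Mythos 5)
Your proposal follows the paper's proof exactly: the paper likewise verifies conditions (1)--(3) from Lemma \ref{L:decomposition-2} and the decomposition \eqref{eq10-2}, identifies the partition via the Clebsch--Gordan formula $V_i\otimes(V_2\otimes U_2)\cong V_{i+1}\otimes U_2\oplus V_{i-1}\otimes U_2$, and then cites Proposition \ref{P5:global}. One small correction to your condition-(3) bookkeeping: the parity-dependent definitions of $E_i$ and $F_i$ are arranged precisely so that $\dim E_i(0)=\dim E_i(2)+1$ for \emph{every} $i$ while $\dim F_i(0)=\dim F_i(2)$ for every $i$ (it is never $F_i$ that carries the excess), and the required difference of one then comes from $\dim\wedge^2(V_2\otimes U_2)(2)=1$ as opposed to $\dim S^2(V_2\otimes U_2)(2)=3$.
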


\section{Special orbits of classical groups} 

In this section we recall well known definitions of special orbits and introduce a notion of metaplectic-special orbits for $G=\wt{\Sp}(W)$.

\begin{defn}

A symplectic orbit is  called \textbf{symplectic special} if the number of even parts bigger than every odd $i$ appearing in the corresponding partition is even. 

A symplectic orbit is called \textbf{metaplectic special} if the number of even parts bigger than every odd $i$ appearing in the corresponding partition is odd. 

An orthogonal  orbit  is called \textbf{orthogonal special} if the number of odd parts smaller than every even $i$ appearing in the corresponding partition is even.

\end{defn}

\begin{prop}\label{P:special} 

A symplectic orbit is  \textbf{symplectic special} if the number $m$ given by   Lemma \ref{L:decomposition}   is even for every odd $i$ appearing in the corresponding partition. 

A symplectic orbit is  \textbf{metaplectic special} if the number  $m$ given by  Lemma \ref{L:decomposition} is odd for  every odd $i$ appearing in the corresponding partition. 

An orthogonal orbit  is  \textbf{orthogonal special} if the number $m$ given by Lemma \ref{L:decomposition} is even for every even $i$ appearing in the corresponding partition. 

\end{prop}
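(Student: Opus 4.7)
The plan is to prove all three statements by a direct parity computation, feeding the explicit formulas for $m$ given in Lemma \ref{L:decomposition} into the partition-theoretic definitions of special and metaplectic-special. In every case the formula for $m$ splits into two summands, one of which will turn out to be automatically even, so that the parity of $m$ is governed by a single sum whose parity is readily matched with the count of parts appearing in the definition.

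I would begin with the symplectic/metaplectic case. Since the form on $W$ is skew-symmetric, the form $(,)_j$ on $U_j$ is skew precisely when $j$ is odd, so Lemma \ref{L:decomposition} is used with $i$ odd, and
\[
m \;=\; i\cdot\!\!\sum_{j>i,\,j\text{ even}}\!\!\dim U_j \;+\;\sum_{j<i,\,j\text{ even}}\! j\,\dim U_j .
\]
Every term of the second summand is divisible by $j$ (which is even), so that summand is even. The first summand is $i$ (odd) times $\sum_{j>i,\,j\text{ even}}\dim U_j$, so
\[
m \;\equiv\; \sum_{j>i,\,j\text{ even}}\dim U_j \pmod{2}.
\]
Since $\dim U_j$ is the multiplicity of the part $j$ in the partition corresponding to $u$, this last sum is precisely the number of even parts strictly greater than $i$. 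Hence $m$ is even (resp.\ odd) for every odd $i$ appearing in the partition if and only if the partition is symplectic-special (resp.\ metaplectic-special).

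The orthogonal case is entirely analogous. Now the ambient form is symmetric, so $(,)_j$ is skew exactly when $j$ is even, and Lemma \ref{L:decomposition} applies with $i$ even. The first summand $i\cdot\sum_{j>i,\,j\text{ odd}}\dim U_j$ is even because $i$ is, and each term of the second summand $\sum_{j<i,\,j\text{ odd}} j\,\dim U_j$ has an odd factor $j$, so its parity equals that of $\sum_{j<i,\,j\text{ odd}}\dim U_j$, which is the number of odd parts of the partition strictly less than $i$. Thus $m$ is even for every even $i$ in the partition if and only if the orbit is orthogonal-special.

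No serious obstacle arises: the argument is essentially bookkeeping. The only points requiring care are (a) to select correctly which case of Lemma \ref{L:decomposition} applies in the symplectic versus orthogonal setting, by tracking which parity of $j$ forces $(,)_j$ to be skew, and (b) to remember that $\dim U_j$ equals the multiplicity of $j$ as a part of the partition corresponding to $u$, so that the sums above really do count parts with multiplicity.
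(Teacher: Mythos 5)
Your proof is correct and follows essentially the same route as the paper: in each case one of the two summands in the formula for $m$ from Lemma \ref{L:decomposition} is automatically even (the weighted sum over $j<i$ when $i$ is odd, the factor $i$ when $i$ is even), so the parity of $m$ reduces to counting the relevant parts $\dim U_j$, exactly as in the paper's argument. Your write-up is in fact slightly more explicit than the paper's about why the discarded summand is even and about which parity of $i$ the skew-symmetry hypothesis of Lemma \ref{L:decomposition} forces in each setting.
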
 
\begin{proof}  In the first two cases, when  $i$ is odd, then the parity of $m$ depends on the parity of 
\[ 
\sum_{j>i, j \text{ even }} \dim U_j 
\] 
but this number is exactly the number of even parts greater than $i$. In the third case, when $i$ is even, than the parity of $m$ depends on the parity of 
\[ 
\sum_{j<i, j \text{ odd }} \dim U_j 
\] 
but this number is exactly the number of odd parts less than $i$

\end{proof}

Given a symplectic partition $\ul{p}$ of $2n$, one easily checks that $\ul{p}$ is metaplectic special if and only if the transpose of $\ul{p}$
 is an orthogonal partition of $2n$.  Conversely, it is known that an orthogonal partition of $2n$ is special if and only if its transpose is a symplectic partition. Summarizing we have the following: 

\begin{prop} The transpose of partitions defines a bijection between metaplectic-special partitions of $2n$ and special orthogonal partitions of $2n$. 
\end{prop}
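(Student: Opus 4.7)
The plan is to deduce the proposition directly from the two preparatory equivalences stated in the paragraph immediately preceding it, combined with the fact that transposition is an involution on partitions of $2n$. The two equivalences are:
\begin{itemize}
\item[(I)] a symplectic partition $\ul{p}$ of $2n$ is metaplectic-special iff $\ul{p}^t$ is orthogonal;
\item[(II)] an orthogonal partition $\ul{q}$ of $2n$ is (orthogonal-)special iff $\ul{q}^t$ is symplectic.
\end{itemize}
With these in hand it suffices to verify that transposition sends metaplectic-special symplectic partitions to special orthogonal partitions and vice versa; involutivity then gives that the two maps are mutual inverses, hence a bijection.

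Starting from a metaplectic-special (hence symplectic) partition $\ul{p}$, equivalence (I) yields that $\ul{p}^t$ is orthogonal. To see $\ul{p}^t$ is moreover orthogonal-special, I apply (II) with $\ul{q}:=\ul{p}^t$: its transpose is $(\ul{p}^t)^t=\ul{p}$, which is symplectic by hypothesis, so the right side of (II) holds and $\ul{p}^t$ is orthogonal-special. Conversely, starting from a special orthogonal $\ul{q}$, (II) gives $\ul{q}^t$ symplectic; applying (I) with $\ul{p}:=\ul{q}^t$ and noting $(\ul{q}^t)^t=\ul{q}$ is orthogonal, the right side of (I) holds and $\ul{q}^t$ is metaplectic-special. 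This closes the loop.

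The genuine combinatorial content is contained entirely in (I) and (II); (II) is classical and (I) is asserted to be ``easily checked''. If I had to establish (I) by hand, I would use the standard identity $m_j(\ul{p}^t)=p_j-p_{j+1}$ (with the convention $p_{r+1}:=0$), which rephrases the orthogonality of $\ul{p}^t$ as the parity condition $p_j\equiv p_{j+1}\pmod 2$ for every even $j$, and symplecticness of $\ul{p}$ as an analogous condition on $\ul{p}^t$. Decomposing the decreasing list of parts of $\ul{p}$ into maximal runs of same parity, the symplectic hypothesis forces each run of odd-parity parts to have even length, and the metaplectic-special condition translates into the statement that each such run begins at an even position (equivalently, the preceding run of even-parity parts has odd length while all subsequent ones have even length). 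This is precisely what the parity condition for $\ul{p}^t$ to be orthogonal requires. The one delicate point to watch in a direct verification is the boundary behaviour at the first and last runs, where the convention $p_{r+1}=0$ and the identity $|\ul{p}|=2n$ (which forces the total number of odd parts to be even) are both needed.
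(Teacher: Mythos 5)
Your proposal is correct and follows exactly the paper's route: the paper derives the proposition by "summarizing" the same two equivalences (I) and (II) stated in the preceding paragraph, using involutivity of transposition just as you do. The sketch you add of how (I) would be verified by hand goes beyond the paper, which only asserts that "one easily checks" it, but the core argument is the same.
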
 

\begin{defn}

 Given any symplectic partition $\ul{p}$ of $2n$, its \textbf{special expansion} $\ul{p}^{\Sp}$ is defined to be the smallest symplectic special partition which is bigger than $\ul{p}$.

 Given any symplectic partition $\ul{p}$ of $2n$, its \textbf{metaplectic special expansion} $\ul{p}^{\wt{\Sp}}$ is defined to be the smallest metaplectic special  partition of  $2n$ which is bigger than $\ul{p}$.

 Given any orthogonal partition $\ul{p}$ of $m$, its \textbf{special expansion} $\ul{p}^{\mathrm{O}}$ is defined to be the smallest orthogonal special symplectic partition of  $m$ which is bigger than $\ul{p}$.

\end{defn}

Lemma 6.3.9 in \cite{CM93} gives a recipe for calculating special expansions. 
In the following proposition, we give a recipe for calculating the metaplectic-expansion. The proof of this proposition is very similar to that of  Lemma 6.3.9  in \cite{CM93} and will be omitted. 

\begin{prop}[Recipe for calculating metaplectic expansion]\label{recipe}
Given a symplectic partition $\ul{p}$ of $2n$, we may write $\ul{p}=[p_1 p_2 \cdots p_r]$
with $p_1 \geq p_2 \geq \cdots \geq p_r >0$. Enumerate the
indices $i$ such that $p_{2i-1} = p_{2i}$ is odd and
$p_{2i-2} \neq p_{2i-1}$ or $2i-2=0$ as $i_1 < \cdots < i_t$ (the set of indices $\{i_1, \ldots, i_t\}$ might be empty). Then
the metaplectic expansion of $\ul{p}$ can be obtained by
replacing each pair of parts $(p_{2i_j-1}, p_{2i_j})$
by $(p_{2i_j-1}+1, p_{2i_j}-1)$, respectively,
and leaving the other parts alone.
\end{prop}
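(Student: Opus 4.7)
The plan is to mirror the proof of \cite[Lemma 6.3.9]{CM93} for the symplectic-special expansion, with the parity conditions reversed throughout. First, I would establish the following structural characterization: a symplectic partition $\ul{q}$ is metaplectic special if and only if every maximal block of equal odd parts in $\ul{q}$ starts at an even-indexed position in the non-increasing listing. This follows from the defining parity condition together with the fact that the odd parts of a symplectic partition appear with even multiplicity, so the number of odd parts strictly greater than $q$ is always even; hence ``number of even parts $>q$'' has the same parity as ``total number of parts $>q$,'' which equals the parity of $a-1$, where $a$ is the starting position of the $q$-block.

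Next, I would check that the output of the recipe is metaplectic special. The selected indices $i_j$ are exactly those for which the pair $(p_{2i_j-1}, p_{2i_j})$ of equal odd values sits at the top of a maximal odd block that begins at an odd position. After the replacement $(q,q)\mapsto(q+1,q-1)$ and re-sorting, one copy of the even number $q+1$ is inserted immediately above, and one copy of $q-1$ immediately below, each such residual $q$-block. The residual $q$-block (if non-empty) now starts one position later than before, hence at an even position; for any odd value $q'\neq q$, the insertions contribute $0$ or $2$ new parts above the $q'$-block (according as $q'>q$ or $q'<q$), preserving the parity of its starting position. Thus every odd block in the output begins at an even position.

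Third, dominance is easy: each local replacement $(q,q)\mapsto(q+1,q-1)$ preserves the total sum and strictly raises the intermediate partial sum by $1$, while re-sorting can only weakly increase partial sums. Thus $\ul{p}^{\wt{\Sp}}\geq \ul{p}$ in the dominance order.

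The main obstacle is the minimality step. Let $\ul{q}$ be any metaplectic-special symplectic partition with $\ul{q}\geq \ul{p}$; one must show $\ul{q}\geq \ul{p}^{\wt{\Sp}}$. I would induct on the number $t$ of indices selected by the recipe. For the least selected index $i_1$, set $a=2i_1-1$ and $q=p_a$; the key claim is that $s_a(\ul{q})\geq s_a(\ul{p})+1$. If $s_a(\ul{q})=s_a(\ul{p})$, the dominance inequality $s_{a-1}(\ul{q})\geq s_{a-1}(\ul{p})$ together with the sortedness of $\ul{q}$ forces the $a$-th part of $\ul{q}$ to equal $q$; a short parity argument (using that odd parts of $\ul{q}$ come in pairs and that the first $a-1$ parts of $\ul{q}$ account for a sum equal to that of parts $>q$ in $\ul{p}$) then shows that the $q$-block in $\ul{q}$ must begin at an odd position $\leq a$, contradicting Step~1. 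Once this partial-sum jump is secured at $i_1$, a ``peel-off'' reduction --- replacing $\ul{p}$ by the symplectic partition obtained by implementing the recipe only at $i_1$, which has exactly $t-1$ selected indices --- combined with the inductive hypothesis yields $\ul{q}\geq \ul{p}^{\wt{\Sp}}$. This is the point where the adaptation of the [CM93] argument is most delicate, since one has to verify that the reduction genuinely decreases $t$ and does not interact with later selected indices $i_2,\ldots,i_t$.
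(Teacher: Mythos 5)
Your proposal is correct and follows exactly the route the paper intends: the authors omit the proof, stating only that it is ``very similar to that of Lemma 6.3.9 in \cite{CM93},'' and your adaptation --- reformulating metaplectic-specialness as ``every maximal odd block starts at an even position,'' verifying the recipe's output is special and dominates $\ul{p}$, and establishing minimality via the partial-sum jump $s_{2i_1-1}(\ul{q})\geq s_{2i_1-1}(\ul{p})+1$ --- is precisely that adaptation. For the record, your ``short parity argument'' does go through: if $s_a(\ul{q})=s_a(\ul{p})$ with $a=2i_1-1$, then writing $b$ for the start of the $q$-block of $\ul{q}$ one gets $s_{b-1}(\ul{q})=s_{a-1}(\ul{p})-(a-b)q$, and since the sum of the parts exceeding an odd value in any symplectic partition is even while $(a-b)q$ would be odd for $b$ even, $b$ must be odd, contradicting specialness of $\ul{q}$.
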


\section{Wave-front sets of classical groups} 

 Let $G=\Sp(W)$ or $\wt{\Sp}(W)$, if $W$ is a symplectic space,  or $G=\mathrm{O}(W)$ if 
$W$ is an orthogonal space.  Let $u$ be a nilpotent element in $\frak{g}$, and $\ul{p}$ the corresponding partition of $\dim(W)$. 
We shall say that $\ul{p}$ is special if it is metaplectic, symplectic or orthogonal special, respectively.

\begin{thm}\label{thm1} Let $k$ be a non-archimedean local field. Let $\pi$ be a smooth representation of $G$. If $G=\wt{\Sp}(W)$ assume that $\pi$ is a
genuine representation. Let $u$ be a nilpotent element in $\frak{g}$ such that $\pi_{N_u,\psi_u}\neq 0$. Let $\ul{p}$ be the partition corresponding to $u$. 
Then $\pi_{N_{u'},\psi_{u'}}\neq 0$ for a nilpotent element $u'$ whose partition is the special expansion $\ul{p}^G$  of  $\ul{p}$. 
 \end{thm}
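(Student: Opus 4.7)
The plan is to apply Corollary \ref{cor:local} iteratively along an explicit chain
\[
\ul{p} = \ul{p}^{(0)} < \ul{p}^{(1)} < \cdots < \ul{p}^{(r)} = \ul{p}^G,
\]
in which each step performs a single move of the form $(i, i) \mapsto (i+1, i-1)$ on the partition. If $\ul{p}$ is already special in the sense appropriate to $G$, we take $u' = u$. Otherwise, specialness of $\ul{p}$ fails at some index $i$ appearing in the partition, and we apply Corollary \ref{cor:local} to raise the pair $(i,i)$, where the degree $d$ of the central extension is fixed by $G$ and the required parity of $m$ is provided by that failure of specialness via Proposition \ref{P:special}.

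Concretely, for $G = \Sp(W)$ we have $d = 1$, and we pick an odd $i$ in $\ul{p}$ for which the number of even parts greater than $i$ is odd --- equivalently, $m$ is odd for the $\Sp(U_i)$-raising of Lemma \ref{L:decomposition}. For $G = \wt{\Sp}(W)$ the long-root $\SL_2 \hookrightarrow \Sp$ lifts to the genuine two-fold cover $\wt{\SL}_2 \hookrightarrow \wt{\Sp}$ and $\pi$ is assumed genuine, so $d = 2$; we pick an odd $i$ with an even number of even parts above it, so that $m$ is even. For $G = O(W)$ we have $d = 1$, and since $(,)_i$ is skew-symmetric for even $i$, we pick an even $i$ in $\ul{p}$ with an odd number of odd parts below it, giving $m$ odd. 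In each case Corollary \ref{cor:local} produces a nilpotent $u^{(1)}$ with $\pi_{N_{u^{(1)}}, \psi_{u^{(1)}}} \neq 0$ whose partition is obtained from $\ul{p}$ by the single replacement $(i,i) \mapsto (i+1, i-1)$.

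To arrange for the chain to terminate at exactly $\ul{p}^G$ we perform one raising for each index $i$ at which $\ul{p}$ fails to be special, following the recipe of Proposition \ref{recipe} in the metaplectic case (and Lemma 6.3.9 of \cite{CM93} for the symplectic and orthogonal cases). The key combinatorial point, and the most delicate part of the argument, is that these raisings commute: a single move $(i, i) \mapsto (i+1, i-1)$ flips the parity of the relevant count at $i$ itself (so a bad $i$ becomes good) while preserving the parity at every other relevant index $i'$. Indeed, the two new parts $i \pm 1$ have parity opposite to $i$, and for any other $i'$ of the same parity as $i$ one has $|i' - i| \geq 2$, so either both of $i \pm 1$ lie strictly below $i'$ or both lie strictly above, contributing in either case an even change to the relevant count. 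Hence the number of bad indices drops by exactly one at each step, and after at most as many steps as there are bad indices we reach $u' = u^{(r)}$ with partition $\ul{p}^G$ and $\pi_{N_{u'}, \psi_{u'}} \neq 0$.
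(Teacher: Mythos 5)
Your proposal is correct and follows essentially the same route as the paper: identify a bad index $i$ via Proposition \ref{P:special}, check the parity of $m$ against the hypotheses of Corollary \ref{cor:local} (equivalently, apply Proposition \ref{proplocal:sym}), replace $(i,i)$ by $(i+1,i-1)$, and iterate until the partition is special. The only difference is that you make explicit the combinatorial bookkeeping --- that each raising repairs the chosen index without spoiling the others, so the process terminates at the special expansion --- which the paper leaves implicit in the phrase ``we can repeat this procedure'' together with Proposition \ref{recipe} and Lemma 6.3.9 of \cite{CM93}.
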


\begin{proof} Assume first that $\pi$ is a genuine representation of $\wt{\Sp}(W)$.  If the partition  $\ul{p}$ is not metaplectic-special then,
 by Proposition \ref{P:special}, there is an odd integer $i$ such that the number $m$ given in Lemma \ref{L:decomposition} is even.
 So, we are in the situation as in Corollary \ref{cor:local} and $FJ_{\psi_u}(\pi_{N_u, \psi_u})=\Hom_{H_{u, \frak{c}}}(\rho_{\psi_u}, \pi_{N_u, \psi_u})$ is a Whittaker-generic representation of $\SL_{2,\mathfrak{c}}$.
 Hence by Proposition \ref{proplocal:sym}, $\pi_{N_{u'},\psi_{u'}}\neq 0$ for a nilpotent element $u'$ whose partition is obtained from the partition of $u$ by replacing a pair $(i,i)$ by $(i+1,i-1)$. 
  We can repeat this procedure until the partition is metaplectic special. The cases when $\pi$ is a representation of 
 ${\Sp}(W)$ or  $\mathrm{O}(W)$ are proved analogously. The theorem is proved. 
   
\end{proof} 

Using Proposition \ref{propglobal:sym} (instead of Proposition \ref{proplocal:sym}) we can prove a global version of Theorem \ref{thm1}. 

 \begin{thm}\label{thm2} Let $k$ be a global field. Let $\Pi$ be a space of smooth functions on 
 $G(k)\backslash G(\mathbb A)$ stable under the action of $G(\mathbb A)$ by right translations.  If $G=\wt{\Sp}(W)$ assume that $\Pi$ consists of 
 genuine functions.  Let $u$ be a nilpotent element in $\frak{g}$ such that $\Pi_{N_u,\psi_u}\neq 0$. Let $\ul{p}$  be the partition corresponding to $u$.  Then $\Pi_{N_{u'},\psi_{u'}}\neq 0$ for a nilpotent element $u'$ whose partition is the special expansion $\ul{p}^G$  of  $\ul{p}$. 
 \end{thm}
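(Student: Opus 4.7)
The plan is to mirror the proof of Theorem \ref{thm1} in the adelic setting, with every local input replaced by its global analogue that has already been established in the paper. Concretely, I would iterate Proposition \ref{propglobal:sym}: starting from the hypothesis $\Pi_{N_u,\psi_u}\neq 0$, I produce a strictly larger nilpotent orbit (in the closure order on partitions) still lying in the global wave-front set of $\Pi$, and continue until the partition reached is special in the sense appropriate to $G$.

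First I would verify that whenever $\ul{p}$ is not special, the raising hypothesis of Proposition \ref{propglobal:sym} is met for some odd $i$ (in the symplectic and metaplectic cases) or some even $i$ (in the orthogonal case) appearing in $\ul{p}$. By Proposition \ref{P:special}, failure of $\ul{p}$ to be special produces exactly such an $i$ together with a value of $m$ of the parity demanded by Corollary \ref{cor:global}: namely $m$ even when $d=2$ (the $G=\wt{\Sp}(W)$ case) and $m$ odd when $d=1$ (the $G=\Sp(W)$ or $\mathrm{O}(W)$ cases). With these parities in place, the $\SL_{2,\mathfrak c}$-action on the Heisenberg representation $\rho_{\psi_u}$ factors through the nontrivial two-fold cover in the linear cases and through the linear quotient in the metaplectic case, so $FJ_{\psi_u}(\Pi_{N_u,\psi_u})$, viewed as a representation of $M(\mathbb A)$, is a genuine representation of a nontrivial central extension of $\SL_2(\mathbb A)$, hence Whittaker-generic. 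Proposition \ref{propglobal:sym} then delivers $\Pi_{N_{u''},\psi_{u''}}\neq 0$ for a nilpotent element $u''$ whose partition is obtained from $\ul{p}$ by replacing some pair $(i,i)$ by $(i+1,i-1)$.

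Iterating, I obtain a strictly increasing chain of partitions in the wave-front set of $\Pi$. Since partitions of $\dim W$ form a finite poset, the process terminates at some special partition $\ul{q}\geq\ul{p}$. The remaining task is to identify $\ul{q}$ with the special expansion $\ul{p}^G$. For this I would compare the elementary raising $(i,i)\mapsto(i+1,i-1)$, legitimate exactly at the parity-violating $i$'s produced by Proposition \ref{P:special}, against the explicit recipe for the special expansion, namely Lemma~6.3.9 of \cite{CM93} in the symplectic and orthogonal cases and Proposition~\ref{recipe} in the metaplectic case. The pairs $(p_{2j-1},p_{2j})$ distinguished by the recipe are precisely the pairs one is permitted to raise by our parity analysis, so a consistent choice at each step reproduces the recipe and yields $\ul{q}=\ul{p}^G$.

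The main obstacle is this last bookkeeping step: matching the nondeterministic raising supplied by Proposition \ref{propglobal:sym} with the canonical prescription for $\ul{p}^G$. Once that alignment is made parity-class by parity-class, no additional global input is needed beyond Proposition \ref{propglobal:sym}, so the final write-up can largely be suppressed, in parallel with how the paper omits the details of Proposition \ref{P5:global}.
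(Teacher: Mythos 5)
Your proposal is correct and follows the same route as the paper: the paper proves Theorem \ref{thm2} by running the argument of Theorem \ref{thm1} verbatim with Proposition \ref{propglobal:sym} in place of Proposition \ref{proplocal:sym}, using Proposition \ref{P:special} to locate, for a non-special partition, an index $i$ whose $m$ has the parity required by Corollary \ref{cor:global}, and iterating until the partition becomes special. Your extra attention to matching the iterated raisings $(i,i)\mapsto(i+1,i-1)$ with the recipe for $\ul{p}^G$ (Lemma 6.3.9 of \cite{CM93} and Proposition \ref{recipe}) only makes explicit a step the paper leaves implicit.
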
 

A more refined information about the conjugacy class of $u'$ is given by repeated application of Proposition \ref{P:forms}.

\section{Wave-front sets of exceptional groups}

We assume that $G$ is a split,  simply connected exceptional group. Let $(u,s,v)$ be an $\mathfrak{sl}_2$-triple in $\frak{g}$ where $s$ is a semi-simple element. 
The index of the orbit of $u$ is the value $\kappa(s,s)$ where the Killing form has been normalized so that the index of the orbit corresponding to the 
long root $\mathfrak{sl}_2$ is 1, as in \cite[Section 10]{Dyn52}.  For exceptional groups this invariant essentially determines the orbit over a separable closure. 
Let $C(s)$ be the centralizer of $s$ in $G$. It is a Levi subgroup of $G$. 
The centralizer $C$  in $G$ of the $\mathfrak{sl}_2$-triple coincides with the stabilizer in $C(s)$ of $v\in \frak{g}(2)$.  
The absolute type of $C$ is well known, however, a particular choice of $v$ defines a $k$-rational form of $C$. 

Let $L$ be the derived group of $C(s)$.  It is somewhat easier to work with $L$.  We determine 
 the stabilizer $S$ in $L$ of a generic point in $\frak{g}(2)$,  on a case by case basis, for all non-special orbits using the explicit structure of the $L$-module $\frak{g}(2)$ given in \cite{JN}.  Our computation works over most fields. Assuming that the characteristic of $k$ is not a bad prime for $G$ appears to be enough. 

Once we have $S$, we check whether the raising conditions in Propositions \ref{P5:local}  and \ref{P5:global}  (see also Corollaries \ref{cor:local} and \ref{cor:global}) 
are met for an $\SL_{2,\mathfrak c} \subseteq S$.  
If so, we enter the corresponding value of $m$ in the table.  In all but three cases (Sections  14.3, 17.15 and 17.23), $\SL_{2,\mathfrak c}$ is a long root $\SL_2 $ in $G$, so the conditions (1)-(3) are trivially to check and we do not include any additional explanation in these cases. The index of the raised orbit is  increased by 1.
 
If our method fails, we consider the method of M\oe glin. Now $k$ is a $p$-adic field.
M\oe glin's result states that if the orbit is not admissible, in the sense of Duflo, then it cannot be a leading term in a wave-front set. The question which orbits of $p$-adic groups are admissible has been studied by Nevins.
 According to Theorem 3.2 in \cite{N02}, the orbit of $u$ is not admissible if there exists $\SL_{2,\frak{c}}$ such that 
$\frak{g}(1)$, when decomposed as $\SL_{2,\frak{c}}$-module, satisfies the property that the total number of irreducible summands of dimensions $n\equiv 2\pmod{4}$ 
is odd. (In particular, 
all orbits that satisfy our raising conditions are not admissible.) If only the method of M\oe glin  applies, we write $\ast$. If both fail, we write $\ast\ast$. 
  These are precisely completely odd, non-special orbits. In particular, we have proved that only completely odd orbits can be admissible, as conjectured by Nevins. 

 In the following  five sections we tabulate our data.
  Notation is mostly self-explanatory, for example, $V_n$ denotes the standard representation of $\SL_n$ or $\Sp_{n}$, if $n$ is even, or the irreducible $n$-dimensional representation of $\SL_2$.

\section{$\rm G_2$} \label{S:G_2}

\begin{center}
    \begin{tabular}{| c | c | c | c |}
    \hline
    \multicolumn{4}{|c|}{Non-Special Nilpotent Orbits in Type $\rm G_2$}\\
    \hline
    Label  & Diagram & $S$&  $m$ \\
     & $\circ \Rrightarrow \circ$  &  &\\ \hline
    $A_1$ & 1 \ \ \ 0 & $ \SL_2 $ & $\ast\ast$   \\ \hline
    $\wt{A}_1$ & 0 \ \ \ 1 & $ \SL_2 $ &  1\\
    \hline
    \end{tabular}
\end{center}

\subsection{$A_1$}

\begin{align*}
L&=\SL_2,\\
\frak{g}(1)&=\Sym^3 V_2=V_4,\\
\frak{g}(2)&=V_1.
\end{align*}

Here $S=L$. Neither method works.

\subsection{$\wt{A}_1$}

\begin{align*}
L&=\SL_2,\\
\frak{g}(1)&=V_2,\\
\frak{g}(2)&=V_1.
\end{align*}

Here $S=L$, and $m=1$.

\section{$\rm F_4$}

\begin{center}
    \begin{tabular}{| c | c | c | c | }
    \hline
    \multicolumn{4}{|c|}{Non-Special Nilpotent Orbits in Type $\rm F_4$}\\
    \hline
    Label & Diagram & $S$ & $m$ \\
     & $\circ$ | $\circ$ $\Longrightarrow$ $\circ$ | $\circ$ &  &\\ \hline
    $A_1$ & 1 \ \ \ \ 0 \ \ \ \ \ \ 0 \ \ \ \ 0 & $\Sp_6$ &  5 \\ \hline
    $A_2 + \wt{A}_1$ & 0 \ \ \ \ 0 \ \ \ \ \ \ 1 \ \ \ \ 0 & $\SL_2$ &  $\ast$ \\ \hline
    $B_2$ &  2 \ \ \ \ 0 \ \ \ \ \ \ 0 \ \ \ \ 1 & $\SL_2(K) $ &  2\\ \hline
    $\wt{A}_2 + A_1$ &  0 \ \ \ \ 1 \ \ \ \ \ \ 0 \ \ \ \ 1  & $\SL_2$ & $\ast\ast$  \\ \hline
    $C_3(a_1)$ &  1 \ \ \ \ 0 \ \ \ \ \ \ 1 \ \ \ \ 0 & $\SL_2$ &  3\\ \hline
    \end{tabular}
\end{center}

\subsection{$A_1$}

\begin{align*}
L&=\Sp_6,\\
\frak{g}(1)&=\wedge^3(V_6)/V_6,\\
\frak{g}(2)&=V_1.
\end{align*}
Here $S=L$. Let $\SL_{2,\mathfrak c}$  correspond to a long root in $\Sp_6$, then $V_6=V_2 \oplus 4V_1$, as $\SL_{2,\mathfrak c}$-module. 
It follows that $\frak{g}(1) = 4V_1 \oplus 5V_2$, as $\SL_{2,\mathfrak c}$-module,  so $m=5$.

\subsection{$A_2+\wt{A_1}$}

\begin{align*}
L&=\SL_3 \times \SL_2,\\
\frak{g}(1)&=V_3 \otimes V_2,\\
\frak{g}(2)&=V_3^* \otimes \Sym^2 V_2.
\end{align*}
Here $\frak{g}(2)$ can be identified with the space of $3\times 3$ matrices so that determinant is a relative invariant. 
The stabilizer  $S$  of a generic point is $\SL_2$, diagonally embedded in $L$, where $\SL_2\rightarrow \SL_3$ is given by the natural action of 
$\SL_2$ on $\Sym^2 V_2$. 
Then $\frak{g}(1) = V_4 \oplus V_2$, as $\SL_{2,\mathfrak c}$-module, only the method of M\oe glin works.

\subsection{$B_2$}

\begin{align*}
L&=\Sp_4,\\
\frak{g}(1)&= V_4,\\
\frak{g}(2)&=V_1\oplus \wedge^2(V_4)/V_1.
\end{align*}
Here $V_5=\wedge^2(V_4)/V_1$ is the standard representation of $\SO_5\cong \Sp_4/\mu_2$. In particular, there is a degree $2$ relative invariant 
$\Delta$. The stabilizer $S$ of a generic point is $\SL_2(K)$ where $K=k(\sqrt{\Delta})$. Let  $\SL_{2,\mathfrak c}=\SL_2(k)\subseteq \SL_2(K)$.  Then the conditions 
(1)-(3) are satisfied with $m=2$. Since $2$ is even, Corollaries \ref{cor:local} and \ref{cor:global} do not apply i.e.  
 the Fourier-Jacobi model in  Propositions \ref{P5:local}  and \ref{P5:global}  is not a genuine representation of a 2-fold central extension of $\SL_{2, \mathfrak c}$. 
  However, as  $\SL_2(K)$-module, $\frak{g}(1)=V_2^K$ 
where $V_2^K$ is the standard 2-dimensional representation over $K$. In particular, the Fourier-Jacobi model in  Propositions \ref{P5:local}  and \ref{P5:global}  is a genuine representation of 2-fold central extension of  $\SL_2(K)$ and  hence it is Whittaker generic as $\SL_{2,\mathfrak c}$-module. Thus the orbit can be raised using  Propositions \ref{P5:local}  and \ref{P5:global}.

\subsection{$\wt{A_2}+A_1$}

\begin{align*}
L&=\SL_2^1 \times \SL_2^2,\\
\frak{g}(1)&=V_2^2 \oplus V_2^1 \otimes \Sym^2 V_2^2,\\
\frak{g}(2)&=V_1 \oplus V_2^1 \otimes V_2^2.
\end{align*}
Here $V_2^1 \otimes V_2^2$ can be identified with the space of $2\times 2$ matrices so that determinant is a relative invariant. 
Hence the stabilizer  $S$ of a generic point in $\frak{g}(2)$ is $\SL_2$,  diagonally embedded in $L$. 
Then $\frak{g}(1) = 2V_2 \oplus V_4$, 
as $\SL_{2,\mathfrak c}$-module. Neither method works.

\subsection{$C_3(a_1)$}

\begin{align*}
L&=\SL_2^1 \times \SL_2^2,\\
\frak{g}(1)&=V_2^1 \oplus V_2^1 \otimes V_2^2,\\
\frak{g}(2)&=V_2^2 \oplus \Sym^2V_2^2.
\end{align*}
The space $\Sym^2V_2^2$ has a degree 2 pseudo-invariant. Hence, the stabilizer in $\SL_2^2$ of a generic point here is $\SO_2$. Since the stabilizer in 
$\SO_2$ of a generic point in $V_2^2$ is trivial, it follows that the stabilizer $S$ of a generic point in $\frak{g}(2)$ is $\SL_2^1$. 
Hence, $\frak{g}(1) = 3V_2$, as $\SL_{2,\mathfrak c}$-module,  and $m=3$.

\section{$\rm E_6$}

\begin{center}
    \begin{tabular}{| c | c | c | c |}
    \hline
    \multicolumn{4}{|c|}{Non-Special Nilpotent Orbits in Type $\rm E_6$}\\
    \hline
    Label & Diagram & $S$ &  $m$\\
    & $\circ$ &  &\\
    & $|$ & &\\
     & $\circ$ | $\circ$ | $\circ$ | $\circ$ | $\circ$   & &\\ \hline
      & 0 &  &\\
    $3A_1$ & 0 \ \ \ \ 0 \ \ \ \ 1 \ \ \ \ 0 \ \ \ \ 0 & $\SL_3\times \SL_2$ &  9 \\ \hline
          & 0 &  &\\
    $2A_2 + {A}_1$ & 1 \ \ \ \ 0 \ \ \ \ 1 \ \ \ \ 0 \ \ \ \ 1 & $\SL_2$ & $\ast\ast$   \\ \hline
          & 1 &  &\\
    $A_3 + A_1$ & 0 \ \ \ \ 1 \ \ \ \ 0 \ \ \ \ 1 \ \ \ \ 0  & $\SL_2 $ &  5\\ \hline
          & 1 &  &\\
    $A_5$ & 2 \ \ \ \ 1 \ \ \ \ 0 \ \ \ \ 1 \ \ \ \ 2  & $\SL_2$ &  3\\ \hline
    \end{tabular}
\end{center}

\subsection{$3A_1$}

\begin{align*}
L&=\SL_3^1 \times \SL_2 \times \SL_3^2,\\
\frak{g}(1)&=V_3^1 \otimes V_2 \otimes V_3^{2,*},\\
\frak{g}(2)&=V_3^{1,*} \otimes V_3^2.
\end{align*}
The space $V_3^{1,*} \otimes V_3^2$ can be identified with the spaces of $3\times 3$ matrices, so that determinant is a relative invariant. Hence the stabilizer  $S$ in $L$ of a generic point 
in $\frak{g}(2)$ is $\SL_3\times \SL_2$ where $\SL_3$ is diagonally embedded in $\SL_3^1 \times \SL_3^2$. Let $\SL_{2,\mathfrak c}$ be the second factor of $S$. 
Then $\frak{g}(1) = 9V_2$,  as $\SL_{2,\mathfrak c}$-module, so  $m=9$.

\subsection{$2A_2+A_1$}\label{ss:2A_2+A_1,E_6} 

\begin{align*}
L&=\SL_2^1 \times \SL_2^2 \times \SL_2^3,\\
\frak{g}(1)&=V_2^1 \oplus V_2^3 \oplus V_2^1 \otimes V_2^2 \otimes V_2^3,\\
\frak{g}(2)&=V_1 \oplus V_2^1 \otimes V_2^2\oplus V_2^2 \otimes V_2^3 .
\end{align*}
The spaces $V_2^1 \otimes V_2^2$ and $V_2^2 \otimes V_2^3$  can be identified with the spaces of $2\times 2$ matrices. Hence $S=\SL_2$, embedded 
diagonally into the three $\SL_2$, and $\frak{g}(1) =  4V_2 \oplus V_4$.  Neither method works.

\subsection{$A_3 + A_1$}

\begin{align*}
L&=\SL_2^1 \times \SL_2^2 \times \SL_2^3,\\
\frak{g}(1)&=V_2^2 \oplus V_2^1 \otimes V_2^2 \oplus V_2^2 \otimes V_2^3,\\
\frak{g}(2)&=V_2^1 \oplus V_2^3 \oplus V_2^1 \otimes V_2^3.
\end{align*}
The stabilizer in $\SL_2^1 \times \SL_2^3$ of a generic point in $V_2^1\otimes V_2^3$ is $\SL_2$ diagonally embedded. The stabilizer in 
this $\SL_2$ of a generic point in $V_2^1\oplus V_2^3$ is trivial. Hence $S=\SL_2^2$, and $\frak{g}(1) = 5V_2$, so $m=5$.

\subsection{$A_5$}

\begin{align*}
L&=\SL_2,\\
\frak{g}(1)&=V_2 \oplus V_2 \oplus V_2,\\
\frak{g}(2)&=5V_1.
\end{align*}
Here $L=S$. Hence $\frak{g}(1) = 3V_2$, so $m=3$.

\section{$\rm E_7$}

\begin{center}
    \begin{tabular}{| c | c | c | c  |}
    \hline
    \multicolumn{4}{|c|}{Non-Special Nilpotent Orbits in Type $\rm E_7$}\\
    \hline
    Label & Diagram & $S$ &  $m$\\
    & $\ \ \ \ \ \ \circ$ &  &\\
    & $\ \ \ \ \ \ |$ &  &\\
     & $\circ$ | $\circ$ | $\circ$ | $\circ$ | $\circ$ | $\circ$ & &\\ \hline
      & \ \ \ \ \ \ 0 &  &\\
    $(3A_1)'$ & 0 \ \ \ \ 0 \ \ \ \ 0 \ \ \ \ 0 \ \ \ \ 1 \ \ \ \ 0 & $\Sp_6 \times \SL_2$ &  15 \\ \hline
         & \ \ \ \ \ \ 1 &  &\\
    $4{A}_1$ &  1 \ \ \ \ 0 \ \ \ \ 0 \ \ \ \ 0 \ \ \ \ 0 \ \ \ \ 0  & $\Sp_6$ &  7 \\ \hline
         & \ \ \ \ \ \ 0 & &\\
    $2A_2 + A_1$ &  0 \ \ \ \ 1 \ \ \ \ 0 \ \ \ \ 0 \ \ \ \ 1 \ \ \ \ 0   & $\SL_2\times \SL_2$ &  $\ast\ast$  \\ \hline
              & \ \ \ \ \ \ 0 &  &\\
    $(A_3+A_1)'$ &  0 \ \ \ \ 0 \ \ \ \ 0 \ \ \ \ 1 \ \ \ \ 0 \ \ \ \ 1 & $\SL_2\times \SL_2\times\SL_2$ &  9 \\ \hline
          & \ \ \ \ \ \ 0 &  &\\
    $A_3+2A_1$ &  1 \ \ \ \ 0 \ \ \ \ 1 \ \ \ \ 0 \ \ \ \ 0 \ \ \ \ 1 & $\SL_2\times \SL_2$ &  5 \\ \hline
    & \ \ \ \ \ \ 1 &  &\\
    $D_4+A_1$ &  1 \ \ \ \ 0 \ \ \ \ 0 \ \ \ \ 0 \ \ \ \ 1 \ \ \ \ 2 & $\Sp_4$ & 3 \\ \hline

         & \ \ \ \ \ \ 0 &  &\\
    $(A_5)'$ &  0 \ \ \ \ 2 \ \ \ \ 0 \ \ \ \ 1 \ \ \ \ 0 \ \ \ \ 1 & $\SL_2\times \SL_2$ &  5\\ \hline
      & \ \ \ \ \ \ 0 &  &\\
    $A_5+A_1$ & 2 \ \ \ \ 1 \ \ \ \ 0 \ \ \ \ 1 \ \ \ \ 0 \ \ \ \ 1 & $\SL_2$ &  $\ast\ast$ \\ \hline
         & \ \ \ \ \ \ 1 &  &\\
    $D_6(a_2)$ &  2 \ \ \ \ 0 \ \ \ \ 1 \ \ \ \ 0 \ \ \ \ 1 \ \ \ \ 0  & $\SL_2$ &  5 \\ \hline
             & \ \ \ \ \ \ 1 &  &\\
    $D_6$ &  2 \ \ \ \ 2 \ \ \ \ 1 \ \ \ \ 0 \ \ \ \ 1 \ \ \ \ 2   & $\SL_2$ &  3\\ \hline
    \end{tabular}
\end{center}

\subsection{$(3A_1)'$}

\begin{align*}
L&=\SL_6 \times \SL_2,\\
\frak{g}(1)&=\wedge^2 V_6 \otimes V_2,\\
\frak{g}(2)&=\wedge^2 V_6^*.
\end{align*}
The stabilizer $S$ of generic point in $\frak{g}(2)$
is $\Sp_6 \times \SL_2$. We let $\SL_{2,\mathfrak c}$ be the second factor of $S$. Then $\frak{g}(1) = 15V_2$, as $\SL_{2,\mathfrak c}$-module, so $m=15$.

\subsection{$4A_1$} \label{ss:4A_1}
 
\begin{align*}
L&=\SL_6,\\
\frak{g}(1)&=V_6^* \oplus \wedge^3V_6,\\
\frak{g}(2)&=V_1 \oplus \wedge^2 V_6.
\end{align*}
The stabilizer $S$ of generic point in $\frak{g}(2)$
is $\Sp_6$. Let $\SL_2^c$ be a long root $\SL_2$.  
Then $V_6 = V_2 \oplus 4V_1$, as $\SL_{2,\mathfrak c}$-module, and  $\frak{g}(1) = 12 V_1 \oplus 7V_2$, so $m=7$.

\subsection{$2A_2+A_1$}\label{ss:2A_2+A_1} 

\begin{align*}
L&=\SL_2^1 \times \SL_4 \times \SL_2^2,\\
\frak{g}(1)&=V_2^1 \otimes V_4^* \oplus \wedge^2 V_4 \otimes V_2^2,\\
\frak{g}(2)&=V_1 \oplus V_2^1 \otimes V_4 \otimes V_2^2.
\end{align*}
The group $\SL_2^1\times\SL_2^2$ acting on $V_2^1  \otimes V_2^2$ gives an identification of $(\SL_2^1\times\SL_2^2)/\mu_2\cong \SO_4$. Hence the second 
summand of $\frak{g}(2)$ can be identified with the space of $4\times 4$ matrices. In this way the determinant is a relative nvariant, and the stabilizer $S$ of a 
generic point of $\frak{g}(2)$ is $\SL_2\times\SL_2$ embedded diagonally into $L$.  It is the tensor product embedding into $\SL_4$, so $V_4=V_2^1\otimes V_2^2$, 
under the restriction.

(Case 1) Let $\SL_{2,\mathfrak c}$ be the factor of $S$ isomorphic to $\SL_2^1$. Then $\frak{g}(1)=8V_1  \oplus 4V_3$, as $\SL_{2,\mathfrak c}$-module, 
hence neither method works.

(Case 2) Let $\SL_{2,\mathfrak c}$  be the factor of $S$ isomorphic to $\SL_2^2$. Then  $\frak{g}(1)=8V_2  \oplus V_4$, as $\SL_{2,\mathfrak c}$-module, 
hence neither method works.

\subsection{$(A_3+A_1)'$}

\begin{align*}
L&=\SL_4 \times \SL_2^1 \times \SL_2^2,\\
\frak{g}(1)&=V_2^2 \oplus V_4 \otimes V_2^1 \otimes V_2^2,\\
\frak{g}(2)&=V_4\otimes V_2^1  \oplus \wedge^2 V_4.
\end{align*}
Since $\SL_2^2$ acts trivially on $\frak{g}(2)$, it is a factor of the stabilizer $S$. The stabilizer in $\SL_4 \times \SL_2^1$ of a generic point in 
$\frak{g}(2)$ is $\SL_2 \times \SL_2$. This will be discussed in the next case, as it is not needed here. Let $\SL_{2,\mathfrak c}=\SL_2^2$. 
Then $\frak{g}(1) = 9V_2$, so $m=9$.

\subsection{$A_3+2A_1$}\label{ss:A_3+A_1} 

\begin{align*}
L&=\SL_2 \times \SL_4,\\
\frak{g}(1)&=V_2 \oplus V_4 \oplus V_2\otimes  \wedge^2 V_4,\\
\frak{g}(2)&=V_1 \oplus \wedge^2 V_4 \oplus V_2 \otimes V_4^*.
\end{align*}
A generic element in $V_2\otimes V_4^*$ is contained in $V_2\otimes V_2^1$ where $V_2^1\subset V_4^*$ is a 2-dimensional subspace.
A generic element in $\wedge^2 V_4$ is a non-degenerate 
symplectic form $\omega$ on $V^*_4$. Let $\Sp_4\subset \SL_4$ be the stabilizer of $\omega$. Now we have two cases. The restriction of $\omega$ to $V_2^1$ is 
trivial or the restriction of $\omega$ to $V_2^1$ is non-trivial, hence non-degenerate as $\omega$ is a symplectic form. This is the generic case. In this case we can write 
$V_4^*=V_2^1 \oplus V_2^2$ where $V_2^2$ is the orthogonal complement of $V_2^1$. Now we have corresponding inclusions 
$$
\SL_2^1 \times \SL_2^2 \subset \Sp_4 \subset \SL_4. 
$$
Hence the stabilizer in $\SL_2 \times \Sp_4$ of a generic point in $V_2\otimes V_2^1$ is $\SL_2^3\times \SL_2^2$ where $\SL_2^3$ is diagonally embedded in 
$\SL_2\times \SL_2^1$. This is the stabilizer $S$ of a generic point in $\frak{g}(2)$. Let $\SL_{2,\mathfrak c}=\SL_2^2$. Then, as $\SL_{2,\mathfrak c}$-module, $\frak{g}(1)=\ 8 V_1 \oplus 5 V_2$, 
so $m=5$.

\subsection{$D_4+A_1$} \label{ss:D_4+A_1}. 

\begin{align*}
L&=\SL_4,\\
\frak{g}(1)&=2V_4 \oplus V_4^*,\\
\frak{g}(2)&=3V_1 \oplus \wedge^2 V_4.
\end{align*}
The stabilizer $S$  of a generic point in $\frak{g}(2)$ is $\Sp_4$. Let $\SL_{2,\mathfrak c}$ be the long root $\SL_2$ in $\Sp_4$. Then 
$V_4^*  = V_2 \oplus 2V_1$, as $\SL_{2,\mathfrak c}$-module.  Hence $\frak{g}(1) = 3V_2 \oplus 6 V_1$, so  $m=3$.

\subsection{$(A_5)'$}

\begin{align*}
L&=\SL_2^1 \times \SL_2^2 \times \SL_2^3 \times\SL_2^4,\\
\frak{g}(1)&=V_2^4 \oplus V_2^2 \otimes V_2^3 \otimes V_2^4,\\
\frak{g}(2)&=V_1\oplus V_2^1 \otimes V_2^2 \oplus V_2^2 \otimes V_2^3. 
\end{align*}
The stabilizer $S$ is $\SL_2\times \SL_2^4$ where $\SL_2$ is diagonally embedded into the first three $\SL_2$'s. Let $\SL_{2,\mathfrak c}=\SL_2^4$. 
Then, as $\SL_{2,\mathfrak c}$-module, $\frak{g}(1) = 5V_2^4$,  so $m=5$.

\subsection{$A_5+A_1$}
 
\begin{align*}
L&=\SL_2^1 \times \SL_2^2 \times \SL_2^3,\\
\frak{g}(1)&=V_2^1 \oplus V_2^3 \oplus  V_2^1 \otimes V_2^2 \otimes V_2^3,\\
\frak{g}(2)&=2V_1 \oplus V_2^1 \otimes V_2^2 \oplus V_2^2 \otimes V_2^3.
\end{align*}
The same as Section \ref{ss:2A_2+A_1,E_6}. 

\subsection{$D_6(a_2)$} \label{ss:D_6(a_2)}
 
\begin{align*}
L&=\SL_2^1 \times \SL_2^2 \times \SL_2^3,\\
\frak{g}(1)&=V_2^2 \oplus V_2^1 \otimes V_2^2 \oplus V_2^2 \otimes V_2^3 ,\\
\frak{g}(2)&=2V_2^1 \oplus V_2^3 \oplus V_2^1 \otimes V_2^3.
\end{align*}
The stabilizer in $\SL_2^1\times \SL_2^3$ of a generic point in $V_2^1 \otimes V_2^3$ is $\SL_2$ embedded diagonally. The stabilizer in 
this $\SL_2$ of a generic point in $ V_2^1 \oplus V_2^3$ is trivial. Hence $S=\SL_2^2$. Then, as $\SL_{2,\mathfrak c}$-module,  $\frak{g}(1) = 5V_2$, so $m=5$.

\subsection{$D_6$}
 
\begin{align*}
L&=\SL_2,\\
\frak{g}(1)&=V_2 \oplus V_2 \oplus V_2,\\
\frak{g}(2)&=6V_1.
\end{align*}
Here $S=L$. Hence $\frak{g}(1) = 3V_2$, so $m=3$.

\section{$\rm E_8$} \label{S:E_8}

\begin{center}
    \begin{tabular}{| c | c | c | c |}
    \hline
    \multicolumn{4}{|c|}{Non-Special Nilpotent Orbits in Type $\rm E_8$}\\
    \hline
    Label & Diagram & $S$  & $m$\\
    & $\ \ \ \ \ \ \ \ \ \ \ \ \circ$ & & \\
    & $\ \ \ \ \ \ \ \ \ \ \ \ |$ & & \\
     & $\circ$ | $\circ$ | $\circ$ | $\circ$ | $\circ$ | $\circ$ | $\circ$ & & \\ \hline
      & \ \ \ \ \ \ \ \ \ \ \ \ \ 0 & & \\
    $3A_1$ & 0 \ \ \ \ 1 \ \ \ \ 0 \ \ \ \ 0 \ \ \ \ 0 \ \ \ \ 0 \ \ \ \ 0 & $\mathrm{F}_4 \times \SL_2$ & 27 \\ \hline
      & \ \ \ \ \ \ \ \ \ \ \ \ \ 1 &  &\\
    $4{A}_1$ &0 \ \ \ \ 0 \ \ \ \ 0 \ \ \ \ 0 \ \ \ \ 0 \ \ \ \ 0 \ \ \ \ 0 & $\Sp_8$ &  15 \\ \hline
      & \ \ \ \ \ \ \ \ \ \ \ \ \ 0 &  &\\
    $A_2 + 3A_1$ &  0 \ \ \ \ 0 \ \ \ \ 0 \ \ \ \ 0 \ \ \ \ 0 \ \ \ \ 1 \ \ \ \ 0  & $\mathrm{G}_2 \times \SL_2$ &  21 \\ \hline
      & \ \ \ \ \ \ \ \ \ \ \ \ \ 0 &  &\\
    $2A_2+A_1$ & 0 \ \ \ \ 1 \ \ \ \ 0 \ \ \ \ 0 \ \ \ \ 0 \ \ \ \ 0 \ \ \ \ 1 & $\mathrm{G}_2\times \SL_2$ & $\ast\ast$ \\ \hline
      & \ \ \ \ \ \ \ \ \ \ \ \ \ 0 &  &\\
    $A_3+A_1$ &  1 \ \ \ \ 0 \ \ \ \ 1 \ \ \ \ 0 \ \ \ \ 0 \ \ \ \ 0 \ \ \ \ 0 & $B_3\times \SL_2 $ &  17 \\ \hline
        & \ \ \ \ \ \ \ \ \ \ \ \ \ 0 &  &\\
    $2A_2+2A_1$ &  0 \ \ \ \ 0 \ \ \ \ 0 \ \ \ \ 1 \ \ \ \ 0 \ \ \ \ 0 \ \ \ \ 0 & $\Sp_4$ & $\ast\ast$ \\ \hline
     \end{tabular}
\end{center}

\begin{center}
    \begin{tabular}{| c | c | c | c | }
    \hline
    \multicolumn{4}{|c|}{Non-Special Nilpotent Orbits in Type $\rm E_8$(continued)}\\
    \hline
    Label & Diagram & $S$ & $m$\\
    & $\ \ \ \ \ \ \ \ \ \ \ \ \circ$ &  &\\
    & $\ \ \ \ \ \ \ \ \ \ \ \ |$ &  &\\
     & $\circ$ | $\circ$ | $\circ$ | $\circ$ | $\circ$ | $\circ$ | $\circ$ &  &\\ \hline
    & \ \ \ \ \ \ \ \ \ \ \ \ \ 0 &  &\\
    $A_3+2A_1$ &  1 \ \ \ \ 0 \ \ \ \ 0 \ \ \ \ 0 \ \ \ \ 0 \ \ \ \ 1 \ \ \ \ 0 & $\Sp_4 \times \SL_2$ &  9 \\ \hline    
    & \ \ \ \ \ \ \ \ \ \ \ \ \ 0 &  &\\
    $A_3+A_2+A_1$ &  0 \ \ \ \ 0 \ \ \ \ 0 \ \ \ \ 0 \ \ \ \ 1 \ \ \ \ 0 \ \ \ \ 0 & $\SL_2\times A_1$ &  15 \\ \hline
    & \ \ \ \ \ \ \ \ \ \ \ \ \ 1 &  &\\
    $D_4+A_1$ &  2 \ \ \ \ 1 \ \ \ \ 0 \ \ \ \ 0 \ \ \ \ 0 \ \ \ \ 0 \ \ \ \ 0 & $\Sp_6$ &  7 \\ \hline
    & \ \ \ \ \ \ \ \ \ \ \ \ \ 0 &  &\\
    $2A_3$ &  0 \ \ \ \ 0 \ \ \ \ 0 \ \ \ \ 1 \ \ \ \ 0 \ \ \ \ 0 \ \ \ \ 1 & $\Sp_4$ & $\ast$ \\ \hline
          & \ \ \ \ \ \ \ \ \ \ \ \ \ 0 &  &\\
    $A_5$ & 1 \ \ \ \ 0 \ \ \ \ 1 \ \ \ \ 0 \ \ \ \ 0 \ \ \ \ 0 \ \ \ \ 2 & $\mathrm{G}_2\times \SL_2$ &  9 \\ \hline
      & \ \ \ \ \ \ \ \ \ \ \ \ \ 0 &  &\\
    $A_4+A_3$ &0 \ \ \ \ 1 \ \ \ \ 0 \ \ \ \ 0 \ \ \ \ 1 \ \ \ \ 0 \ \ \ \ 0 & $\SL_2$ & $\ast\ast$ \\ \hline
    & \ \ \ \ \ \ \ \ \ \ \ \ \ 0 &  &\\
    $A_5+A_1$ &  1 \ \ \ \ 0 \ \ \ \ 0 \ \ \ \ 0 \ \ \ \ 1 \ \ \ \ 0 \ \ \ \ 1  & $\SL_2 \times \SL_2$ &  5 \\ \hline
      & \ \ \ \ \ \ \ \ \ \ \ \ \ 0 &  &\\
    $D_5(a_1)+A_2$ & 1 \ \ \ \ 0 \ \ \ \ 1 \ \ \ \ 0 \ \ \ \ 0 \ \ \ \ 1 \ \ \ \ 0 & $\SL_2$ & $\ast$ \\ \hline
      & \ \ \ \ \ \ \ \ \ \ \ \ \ 1 &  &\\
    $D_6(a_2)$ &  0 \ \ \ \ 1 \ \ \ \ 0 \ \ \ \ 0 \ \ \ \ 0 \ \ \ \ 1 \ \ \ \ 0 & $\SL_2(K)$ &  10 \\ \hline
    & \ \ \ \ \ \ \ \ \ \ \ \ \ 0 &  &\\
    $E_6(a_3)+A_1$ &  0 \ \ \ \ 1 \ \ \ \ 0 \ \ \ \ 1 \ \ \ \ 0 \ \ \ \ 0 \ \ \ \ 1 & $\SL_2$ & $\ast\ast$ \\ \hline
    & \ \ \ \ \ \ \ \ \ \ \ \ \ 0 &  &\\
    $E_7(a_5)$ &  0 \ \ \ \ 0 \ \ \ \ 1 \ \ \ \ 0 \ \ \ \ 1 \ \ \ \ 0 \ \ \ \ 0 & $\SL_2\times \Aut^1(E)$ &  9 \\ \hline
    & \ \ \ \ \ \ \ \ \ \ \ \ \ 0 &  &\\
    $D_5+A_1$ &  2 \ \ \ \ 1 \ \ \ \ 0 \ \ \ \ 1 \ \ \ \ 0 \ \ \ \ 0 \ \ \ \ 1 & $\SL_2 \times \SL_2$ &  5 \\ \hline
    & \ \ \ \ \ \ \ \ \ \ \ \ \ 1 &  &\\
    $D_6$ &  2 \ \ \ \ 1 \ \ \ \ 0 \ \ \ \ 0 \ \ \ \ 0 \ \ \ \ 1 \ \ \ \ 2 & $\Sp_4$ &  3 \\ \hline
    & \ \ \ \ \ \ \ \ \ \ \ \ \ 0 &  &\\
    $A_7$ &  0 \ \ \ \ 1 \ \ \ \ 1 \ \ \ \ 0 \ \ \ \ 1 \ \ \ \ 0 \ \ \ \ 1 & $\SL_2$ & $\ast$ \\ \hline
        & \ \ \ \ \ \ \ \ \ \ \ \ \ 0 &  &\\
    $E_6+A_1$ &  2 \ \ \ \ 2 \ \ \ \ 1 \ \ \ \ 0 \ \ \ \ 1 \ \ \ \ 0 \ \ \ \ 1 & $\SL_2$ & $\ast\ast$ \\ \hline
        & \ \ \ \ \ \ \ \ \ \ \ \ \ 1 &  &\\
    $E_7(a_2)$ &  2 \ \ \ \ 2 \ \ \ \ 0 \ \ \ \ 1 \ \ \ \ 0 \ \ \ \ 1 \ \ \ \ 0 & $\SL_2$ &  5 \\ \hline
        & \ \ \ \ \ \ \ \ \ \ \ \ \ 1 &  &\\
    $D_7$ &  1 \ \ \ \ 0 \ \ \ \ 1 \ \ \ \ 1 \ \ \ \ 0 \ \ \ \ 1 \ \ \ \ 2 & $\SL_2$ &  5 \\ \hline
        & \ \ \ \ \ \ \ \ \ \ \ \ \ 1 &  &\\
    $E_7$ &  2 \ \ \ \ 2 \ \ \ \ 2 \ \ \ \ 1 \ \ \ \ 0 \ \ \ \ 1 \ \ \ \ 2 & $\SL_2$ &  3\\ \hline
    \end{tabular}
\end{center}

\subsection{$3A_1$}

\begin{align*}
L&=\SL_2 \times E_6,\\
\frak{g}(1)&=V_2 \otimes V_{27}^*,\\
\frak{g}(2)&=V_{27}.
\end{align*}
The generic stabilizer $S$ is $\SL_2\times F_4$. Let $\SL_{2,\mathfrak c}=\SL_2$, the first factor. 
Then $\frak{g}(1) = 27V_2$, so $m=27$.

\subsection{$4A_1$}

\begin{align*}
L&=\SL_8,\\
\frak{g}(1)&=\wedge^3 V_8,\\
\frak{g}(2)&=\wedge^2 V_8^*.
\end{align*} The stabilizer $S$ is $\Sp_8$. Let $\SL_{2,\mathfrak c}$ be a long root $\SL_2$ in $\Sp_6$. Then 
 $V_8 = V_2 \oplus 6 V_1$, as $\SL_{2,\mathfrak c}$-module. Hence  $\frak{g}(1)= 26 V_1 \oplus 15 V_2$,  so $m=15$.

\subsection{$A_2 + 3 A_1$}

\begin{align*}
L&=\SL_7 \times \SL_2,\\
\frak{g}(1)&= \wedge^2 V_7\otimes V_2,\\
\frak{g}(2)&=\wedge^3 V_7^*.
\end{align*}
The stabilizer in $\SL_7$ of a generic point in $\wedge^3 V_7^*$ is $G_2$. Hence the stabilizer $S$ is $G_2 \times \SL_2$.  
Let $\SL_{2,\mathfrak c}=\SL_2$, the second factor in $S$. 
Then, as $\SL_{2,\mathfrak c}$-module, $\frak{g}(1)=21 V_2$, so $m=21$.

\subsection{$2A_2 + A_1$}

\begin{align*}
L &=\SL_2 \times \Spin_{10},\\
\frak{g}(1)&=\ V_2 \otimes V_{10} \oplus V_{16}^2,\\
\frak{g}(2)&=V_1 \oplus V_2 \otimes V_{16}^1.
\end{align*}
Here $V_{16}^i$ ($i=1,2$) denote two inequivalent spin representations.  A generic element in 
$V_2 \otimes V_{16}^1$ is contained in $V_2\otimes V_2^1$ for some 2-dimensional subspace $V_2^1\subset V_{16}^1$. 
The point-wise stabilizer in $\Spin_{10}$ of a generic $V_2^1$ is $G_2$. The centralizer of $G_2$ in $\Spin_{10}$ is 
$\Spin_3$. Since $\Spin_3$ acts on $V_2^1$, we have an identification $\Spin_3\cong \SL_2^1$. It follows that the stabilizer $S$ of 
generic point in $\frak{g}(2)$ is $\SL_2\times G_2$ where $\SL_2$ is embedded diagonally. 

(Case 1) Let $\SL_{2,\mathfrak c}$ be the first factor of $S$. Then $V_{10}=V_3 \oplus 7V_1$ and $V_{16}^2=8V_2$ as $\SL_{2,\mathfrak c}$-modules. 
Then $\frak{g}(1)= 16V_2 \oplus V_4$, hence neither method works.

(Case 2) Take the $\SL_{2,\mathfrak c}$ corresponding to a long root $\SL_2$ in $G_2$. It is a root $\SL_2$  in $\Spin_{10}$. 
Then  $V_{10}= 2V_2 \oplus 6V_1$ and $V_{16}=4 V_3 \oplus 8 V_1$, as $\SL_{2,\mathfrak c}$-modules, and 
  $\frak{g}(1)= 20 V_1 \oplus  8 V_2$. Hence neither method works.

\subsection{$A_3 + A_1$}

\begin{align*}
L&=\SL_2 \times \Spin_{10},\\
\frak{g}(1)&=V_2 \oplus  V_2\otimes V_{16},\\
\frak{g}(2)&=V_{16} \oplus V_{10}.
\end{align*}
Here $V_{16}$ denotes a spin representation. Note that $\SL_2$, the first factor of $L$, is always in 
$S$. So we let $\SL_{2,\mathfrak c}$ be this $\SL_2$. 
Then $\frak{g}(1)=17 V_2$, as $\SL_{2,\mathfrak c}$-module, so $m=17$.

\subsection{$2A_2 + 2A_1$}

\begin{align*}
L&=\SL_4 \times \SL_5,\\
\frak{g}(1)&=V_4 \otimes  \wedge^2 V_5^*,\\
\frak{g}(2)&= \wedge^2 V_4 \otimes V_5.
\end{align*}
Write $V_6=\wedge^2 V_4$. The action of $\SL_4$ on $V_6$ gives an isomorphism $\SL_4/\mu_2\cong \SO_6$. A generic element in $V_6\otimes V_5$ is 
contained in $V_5^1\otimes V_5$ for a 5-dimensional non-degenerate subspace $V_5^1\subset V_6$. 
Note that $V_5^1$ is a split quadratic space, as $V_6$ is. The stabilizer $S$ of 
a generic point is $\Sp_4\cong \Spin(V_5^1)$ embedded diagonally in $\SL_4\times \SL_5$. Let 
$\SL_{2,\mathfrak c}$ be a long root $\SL_2$ in $\Sp_4$. Then $V_4 = V_2 \oplus 2V_1$, and $V_5 = 2V_2 \oplus V_1$, as $\SL_{2,\mathfrak c}$-modules, and  
$\frak{g}(1)= 8V_1 \oplus 8 V_2 \oplus 4 V_3 \oplus V_4$. Hence  neither method works.

\subsection{$A_3 + 2A_1$} 

\begin{align*}
L&=\SL_6 \times \SL_2,\\
\frak{g}(1)&=V_6^* \oplus \wedge^2 V_6 \otimes V_2,  \\
\frak{g}(2)&=  V_6\otimes V_2 \oplus \wedge^2 V_6^*.
\end{align*}
A generic element in $\wedge^2 V_6^*$ is a non-degenerate symplectic form $\omega$ on $V_6$.  Let $\Sp_6$ be the stabilizer of 
$\omega$ in $\SL_6$. A generic element in 
$V_6\otimes V_2$ is contained in $V_2^1\otimes V_2$ where $V_2^1$ is a 2-dimensional subspace of $V_6$. We are in a generic 
situation when $\omega$ restricts to a non-trivial form on $V_2^1$. In this case we can decompose $V_6=V_2^1\oplus V_4$. This gives an 
embedding $\SL_2^1\times \Sp_4 \subset \Sp_6$. The centralizer $S$ of a generic element in $\frak{g}(2)$ is $\Sp_4\times \SL_2$ 
where $\SL_2$ is diagonally embedded in $\SL_2^1$ and the second factor of $L$. Let $\SL_{2,\mathfrak c}$ be a long root $\SL_2$ in 
$\Sp_4$. Then  $V_6^* \cong V_6 = V_2 \oplus 4V_1$, as $\SL_{2,\mathfrak c}$-modules, and $\frak{g}(1)=18 V_1 \oplus 9 V_2$, 
so  $m=9$.

\subsection{$A_3 + A_2 + A_1$}

\begin{align*}
L&=\SL_5 \times \SL_2 \times \SL_3,\\
\frak{g}(1)&= V_5 \otimes V_2 \otimes V_3^*,\\
\frak{g}(2)&=\wedge^2 V_5\otimes V_3  .
\end{align*}
We take $\SL_{2,\mathfrak c}$ to be the second factor of $L$, it is clearly contained in $S$. 
Then $\frak{g}(1)=15V_2$,  as $\SL_{2,\mathfrak c}$-module, so $m=15$. 

\subsection{$D_4 + A_1$}

\begin{align*}
L&=\SL_6,\\
\frak{g}(1)&=V_6^* \oplus \wedge^3 V_6,\\
\frak{g}(2)&=2V_1 \oplus \wedge^2 V_6.
\end{align*}
The same as Section \ref{ss:4A_1}.

\subsection{$2A_3$}

\begin{align*}
L&=\SL_4^1 \times \SL_4^2,\\
\frak{g}(1)&=V_4^2 \oplus V_4^1 \otimes \wedge^2 V_4^2,\\
\frak{g}(2)&=V_4^1 \otimes V_4^{2,*} \oplus \wedge^2 V_4^1.
\end{align*}
The stabilizer of a generic point in $V_4^1\otimes V_4^{2,*}$ is $\SL_4$ embedded diagonally. The stabilizer of a generic point in 
$ \wedge^2 V_4^1$ is $\Sp_4 \times \SL_4^2$. Hence $S=\Sp_4$ embedded diagonally. Let $\SL_{2,\mathfrak c}$ be a long root $\SL_2$ in $\Sp_4$. Then, 
as $\SL_2^c$-modules, $V_4^1=V_4^2=V_2 + 2V_1$. Hence $\frak{g}(1)=  8V_1 \oplus 7V_2 \oplus 2V_3$ and the method of M\oe glin works.

\subsection{$A_5$}  
\begin{align*}
L&=  \SL_2 \times \Spin_8,\\
\frak{g}(1)&=V_2 \oplus V_2 \otimes V_8,\\
\frak{g}(2)&=V_8 \oplus V_1 \oplus V_8'.
\end{align*}
Here we can proclaim $V_8$ in $\frak{g}(1)$ be the standard representation, then $V_8'$ is a spin representation.
 Note that $\SL_2$, the first factor of $L$, is always in $S$. 
 So we let $\SL_{2,\mathfrak c}$ be this $\SL_2$. 
Then $\frak{g}(1)=9 V_2$, as $\SL_{2,\mathfrak c}$-module, so $m=9$.

\subsection{$A_4 + A_3$}

\begin{align*}
L &=\SL_2^1 \times \SL_3^1 \times \SL_2^2 \times  \SL_3^2,\\
\frak{g}(1)&=V_2^1 \otimes V_3^{1,*} \oplus V_3^{1} \otimes V_2^2 \otimes V_3^{2,*},\\
\frak{g}(2)&=V_2^1 \otimes V_2^2 \otimes V_3^{2,*}  \oplus V_3^{1,*} \otimes V_3^2.
\end{align*}
The stabilizer is $\SL_2$, diagonally embedded into all factors of $L$, where we use the symmetric square embedding into 
two $\SL_3$'s. Then $\frak{g}(1)= 3 V_2 \oplus 3V_4 \oplus V_6$ and neither method works. 

\subsection{$A_5 + A_1$}

\begin{align*}
L&=\SL_4 \times \SL_2^1 \times \SL_2^2,\\
\frak{g}(1)&=V_2^2 \oplus V_4^* \oplus V_4 \otimes V_2^1 \otimes V_2^2,\\
\frak{g}(2)&=V_4  \otimes V_2^1 \oplus V_2^1 \otimes V_2^2 \oplus \wedge^2 V_4.
\end{align*}
The stabilizer of a generic element in $V_2^1 \otimes V_2^2$ is $\SL_2$, diagonally embedded into the last two factors of $L$. 
The stabilizer in $\SL_4$ of a generic element in $\wedge^2 V_4$ is $\Sp_4$. As in Section \ref{ss:A_3+A_1}, one can write 
$V_4 = V_2^3 \oplus V_2^4$, such that the stabilizer in $\Sp_4 \times \SL_2^1$ of a generic element in $V_4 \otimes V_2^1$ is $\SL_2^3 \times \SL_2$, where the second $\SL_2$ is diagonally embedded to $\SL_2^4$ and $\SL_2^1$.
Hence $S=\SL_2^3 \times \SL_2$, where the second $\SL_2$ is diagonally embedded in $\SL_2^4$, $\SL_2^1$ and $\SL_2^2$.
Let $\SL_{2,\mathfrak c}=\SL_2^3$.
Then $\frak{g}(1)=12 V_1 \oplus 5 V_2$, so $m=5$.

\subsection{$D_5(a_1) + A_2$}

\begin{align*}
L&=\SL_2^1  \times \SL_4\times \SL_2^2,\\
\frak{g}(1)&=V_2^1 \oplus  V_2^1 \otimes V_4^*  \oplus  \wedge^2 V_4\otimes V_2^2,\\
\frak{g}(2)&=V_1 \oplus V_4^* \oplus  V_2^1\otimes V_4 \otimes V_2^2  .
\end{align*}
As in  Section \ref{ss:2A_2+A_1},  the stabilizer of a generic element in $V_2^1 \otimes V_4 \otimes V_2^2$ is $\SL_2 \times \SL_2$, diagonally embedded in $L$, where 
embedding into $\SL_4$  is given by the tensor product. The stabilizer in $\SL_2 \times \SL_2$ of a generic element in $V_4^*$ is $\SL_2$, diagonally embedded. 
 Hence, $S$ is $\SL_2$ and,  as  $\SL_{2,\mathfrak c}$ module, $\frak{g}(1)= 3V_2 \oplus 2V_3 \oplus V_4$. Hence, only the method of M\oe glin works.

\subsection{$D_6(a_2)$}

\begin{align*}
L&=\SL_2^1 \times \SL_4 \times \SL_2^2,\\
\frak{g}(1)&=V_4 \oplus V_2^1 \otimes V_4^* \oplus V_4\otimes V_2^2 ,\\
\frak{g}(2)&=V_2^1 \oplus V_2^1 \otimes V_2^2 \oplus \wedge^2 V_4 \otimes V_2^2.
\end{align*}
The stabilizer in $\SL_2^1 \times \SL_2^2$ of a generic element in $V_2^1 \otimes V_2^2$ is $\SL_2$ diagonally embedded. The action of 
$\SL_4$ on $\wedge^2 V_4$ gives an isomorphism  $\SL_4/\mu_2 \cong \SO_6$. A generic element in $\wedge^2 V_4 \otimes V_2^2$ is contained in 
$V_2^3\otimes V_2^2$ where $V_2^3 \subset \wedge^2 V_4 $ is a non-degenerate 2-dimensional quadratic subspace. Write $\wedge^2 V_4= V_2^3\oplus V_4^3$. 
Hence the stabilizer in $\SO_4 \times \SL_2^2$ of a generic element in $\wedge^2 V_4 \otimes V_2^2$ is 
$\SO_2 \times \SO(V_4^3)$ where $\SO_2$ is diagonally embedded in $\SO(V_2^3)$ and $\SO_2\subset \SL_2^2$. 
Next, the stabilizer in $\SL_2^1 \times \SL_2^2$ of a generic element in $V_2^1 \otimes V_2^2$ is $\SL_2$ diagonally embedded. Furthermore, the stabilizer in 
$\SO_2$ of a generic point in $V_2^1$ is trivial. Hence the stabilizer $S$ of a generic point in $\frak{g}(2)$ is 
$\Spin(V_4^3)\cong \SL_2(K) \subset \SL_4$, where $K$ is a quadratic algebra. Let  $\SL_{2,\mathfrak c}=\SL_2(k)\subseteq \SL_2(K)$. Then the conditions 
(1)-(3) are satisfied with $m=10$. Since $10$ is even, Corollaries \ref{cor:local} and \ref{cor:global} do not apply i.e.  
 the Fourier-Jacobi model in  Propositions \ref{P5:local}  and \ref{P5:global}  is not a genuine representation of a 2-fold central extension of $\SL_{2, \mathfrak c}$. 
  However, as  $\SL_2(K)$-module, $\frak{g}(1)=5V_2^K$ 
where $V_2^K$ is the standard 2-dimensional representation over $K$. In particular, the Fourier-Jacobi model in  Propositions \ref{P5:local}  and \ref{P5:global}  is a genuine representation of 2-fold central extension of  $\SL_2(K)$ and  hence it is Whittaker generic as $\SL_{2,\mathfrak c}$-module. Thus the orbit can be raised using  Propositions \ref{P5:local}  and \ref{P5:global}.

\subsection{$E_6(a_3)+A_1$}

\begin{align*}
L&=\SL_2^1 \times \SL_2^2 \times \SL_4,\\
\frak{g}(1)&=V_2^1 \otimes V_2^2 \oplus V_4 \oplus V_2^2 \otimes \wedge^2 V_4,\\
\frak{g}(2)&=V_1 \oplus V_2^2 \otimes V_4^* \oplus V_2^1 \otimes\wedge^2 V_4.
\end{align*}
This is somewhat similar to the previous case, so let $\SO_2 \times \SO(V_4^3)$ be the stabilizer in $\SL_2^1 \times \SO_6$. Ignoring the 
factor $\SO_2$, we must compute the stabilizer in $\SL_2^2 \times \SL_2(K)$ of a generic point in  $V_2^2 \otimes V_4^*$. 
It is  $\SL_2$ diagonally embedded into $\SL_2^2 \times \SL_2(K)$. (This is evident when $K=k\oplus k$, and by Galois descent in general.)  
Hence $\SL_{2,\mathfrak c}$ is embedded into 
$\SL_2^2$ and $\SL_4$, so that $V_4=2V_2$, as $\SL_{2,\mathfrak c}$-module. Hence $\frak{g}(1)=8V_2 \oplus V_4$. Hence neither method works.

\subsection{$E_7(a_5)$}

\begin{align*}
L&=\SL_3^1 \times \SL_2^1 \times \SL_2^2 \times \SL_3^2,\\
\frak{g}(1)&=V_3^1\otimes V_2^1  \oplus V_2^1 \otimes V_2^2\otimes V_3^{2,*},\\
\frak{g}(2)&=V_3^2 \oplus V_3^1 \otimes V_2^2 \otimes  V_3^{2,*}.
\end{align*}
It is clear that $\SL_2^1$ is always in $S$. (The 18-dimensional summand in $\frak{g}(2)$ is the Bhargava $3\times 2 \times 3$ cube. 
A generic cube corresponds to a cubic separable algebra $E$ over $k$, and the stabilizer  in $\SL_3^1  \times \SL_2^2 \times \SL_3^2$ is 
isomorphic to $E^1 \rtimes \Aut^1(E)$ where $E^1$ is the torus of norm one elements in $E^{\times}$ and $\Aut^1(E)$ the group of 
$k$-automorphism of $E$ of determinant 1.) 
 Let $\SL_{2,\mathfrak c} = \SL_2^1$.
Then, as an $\SL_{2,\mathfrak c}$ module,
$\frak{g}(1)=9V_2^1$.
Hence, $m=9$.

\subsection{$D_5 + A_1$}

\begin{align*}
L&=\SL_2 \times \SL_4,\\
\frak{g}(1)&=V_2 \oplus V_4 \oplus V_2\otimes \wedge^2 V_4,\\
\frak{g}(2)&=2V_1 \oplus \wedge^2 V_4 \oplus V_2\otimes V_4^*.
\end{align*}
The same as  Section \ref{ss:A_3+A_1}.

\subsection{$D_6$}

\begin{align*}
L&=\SL_4,\\
\frak{g}(1)&=V_4^* \oplus 2 V_4,\\
\frak{g}(2)&=4V_1 \oplus \wedge^2 V_4.
\end{align*}
The same as Section \ref{ss:D_4+A_1}.

\subsection{$A_7$}

\begin{align*}
L&=\SL_2^1 \times \SL_2^2 \times \SL_2^3 \times \SL_2^4,\\
\frak{g}(1)&=V_2^1 \oplus V_2^2 \oplus V_2^4 \oplus V_2^2 \otimes V_2^3 \otimes V_2^4,\\
\frak{g}(2)&=V_1 \oplus V_2^1 \otimes V_2^2\oplus V_2^2 \otimes V_2^3 \oplus V_2^3 \otimes V_2^4 .
\end{align*}
The stabilizer $S$ of a generic point in $\frak{g}(2)$ is  $\SL_2$ embedded diagonally in the four $\SL_2$.  
Hence $\frak{g}(1)= 5V_2 \oplus V_4$, and the method of M\oe glin works. 

\subsection{$E_6 + A_1$}

\begin{align*}
L&=\SL_2^1 \times \SL_2^2 \times \SL_2^3,\\
\frak{g}(1)&=V_2^3 \oplus V_2^1 \oplus V_2^1 \otimes V_2^2 \otimes V_2^3,\\
\frak{g}(2)&=3V_1 \oplus V_2^1 \otimes V_2^2 \oplus V_2^2 \otimes V_2^3.
\end{align*}
The same as  Section \ref{ss:2A_2+A_1,E_6}.

\subsection{$E_7(a_2)$}

\begin{align*}
L&=\SL_2^1 \times \SL_2^2 \times \SL_2^3,\\
\frak{g}(1)&=V_2^2 \oplus V_2^1 \otimes V_2^2\oplus V_2^2 \otimes V_2^3 ,\\
\frak{g}(2)&=V_1 \oplus 2V_2^1 \oplus V_2^3 \oplus V_2^1 \otimes V_2^3.
\end{align*}

The same as  Section \ref{ss:D_6(a_2)}.

\subsection{$D_7$}

\begin{align*}
L&=\SL_2^1 \times \SL_2^2,\\
\frak{g}(1)&=2V_2^1 \oplus 3V_2^2,\\
\frak{g}(2)&=5V_1 \oplus V_2^1 \otimes V_2^2,\\
\frak{g}(3)&=2V_2^1 \oplus 3V_2^2,\\
\frak{g}(4)&=4V_1 \oplus V_2^1 \otimes V_2^2,\\
\frak{g}(5)&=2V_2^1 \oplus 2V_2^2.
\end{align*}
The stabilizer $S$ of a generic point in $\frak{g}(2)$ is $\SL_2$ embedded diagonally in the two $\SL_2$.
Hence $\frak{g}(1)=5V_2$, so $m=5$. Note that $\dim \frak{g}(0,2)=2$ and $\dim \frak{g}(2,2)=1$, and the $s_{\mathfrak c}$-weights are bounded by $2$.

\subsection{$E_7$}

\begin{align*}
L&=\SL_2,\\
\frak{g}(1)&=V_2 \oplus V_2 \oplus V_2,\\
\frak{g}(2)&=7V_1.
\end{align*}
Here $S=L$. Hence $\frak{g}(1) = 3V_2$, so $m=3$.

\section{Acknowledgment} 
The authors would like to thank Peter Trapa for a crystalizing conversation concerning the definition of metaplectic special orbits, Joseph Hundley for bringing attention to a paper of Monica Nevins and Monica Nevins for a correspondence on that paper. A part of this paper was written during 
the program on New Geometric Methods in Number Theory at MSRI, Berkeley. The authors have been partially supported by grants from NSF, DMS--1301567, 
DMS-1302122 and DMS-1359774, respectively. The second named author was also supported by a postdoctoral research fund from the Department of Mathematics, University of Utah.

\end{document}